\def\multilimits@{\bgroup
  \Let@
  \restore@math@cr
  \default@tag
 \baselineskip\fontdimen10 \scriptfont\tw@
 \advance\baselineskip\fontdimen12 \scriptfont\tw@
 \lineskip\thr@@\fontdimen8 \scriptfont\thr@@
 \lineskiplimit\lineskip
 \vbox\bgroup\ialign\bgroup\hfil$\m@th\scriptstyle{##}$\hfil\crcr}
\def\Sb{_\multilimits@}
\def\Sp{^\multilimits@}
\def\endSb{\crcr\egroup\egroup\egroup}
\def\smallmatrix{\null\,\vcenter\bgroup
 \Let@\restore@math@cr\default@tag
 \baselineskip6\ex@ \lineskip1.5\ex@ \lineskiplimit\lineskip
 \ialign\bgroup\hfil$\m@th\scriptstyle{##}$\hfil&&\thickspace\hfil
 $\m@th\scriptstyle{##}$\hfil\crcr}
\def\endsmallmatrix{\crcr\egroup\egroup\,}
\theoremstyle{plain}
\newtheorem{theorem}{Theorem}[section]
\newtheorem{lemma}[theorem]{Lemma}
\theoremstyle{definition}
\newtheorem{definition}[theorem]{Definition}
\newtheorem{example}[theorem]{Example}
\theoremstyle{remark}
\newtheorem{remark}[theorem]{Remark}
\newtheorem{notation}[theorem]{Notation}
\numberwithin{equation}{section}
\def\grad{\mathop{\rm grad}\nolimits} 				      
\def\rank{\mathop{\rm rank}}
\def\tr{\mathop{\rm tr}}
\def\Hom{\mathop{\rm Hom}\nolimits}
\def\Rep{\mathop{\rm Rep}\nolimits}
\def\Ext{\mathop{\rm Ext}\nolimits}
\def\Ker{\mathop{\rm Ker}\nolimits}
\def\Coker{\mathop{\rm Coker}\nolimits}
\def\Z{\mathbb{Z}}
\def\Q{\mathbb{Q}}
\def\R{\mathbb{R}}
\def\C{\mathbb{C}}
\def\journame{\itshape}
\def\bookname{\itshape}
\def\bs{\boldsymbol}
\def\lra{\longrightarrow}
\def\lla{\longleftarrow}
\newcommand{\vt}[1]{\overset{#1}{\cdot}}
\newcommand{\ovt}[1]{\overset{#1}{\circ}}
\def\journame{\itshape}
\def\bookname{\itshape}
\def\address#1#2{\begingroup
\noindent\parbox[t]{8.5cm}{%
\small{\scshape\ignorespaces#1}\par\vskip1ex
\noindent\small{\itshape E-mail}%
\/: #2\par\vskip4ex}\hfill%
\endgroup}%
\begin{document}
\title{$b$-Functions associated with quivers of type $A$ \\
{\normalsize{\it Dedicated to the 60th birthday of Professor Tatsuo
Kimura}}}
\author{Kazunari Sugiyama}
\date{\today}
\maketitle

\begin{abstract}
 We study the $b$-functions of relative invariants of
 the prehomogeneous vector spaces associated with quivers of
 type $A$. By applying the decomposition formula
 for $b$-functions, 
 we determine explicitly the $b$-functions of one variable
 for each irreducible relative invariant. 
 Moreover, we give a graphical algorithm to determine
 the $b$-functions of several variables. 
\end{abstract}

\section*{Introduction}

We start with a classical formula
\begin{equation}
  \label{form:Cayley'sFormula}
 \det\left(\frac{\partial}{\partial v}      
 \right)\det (v)^{s+1} = (s+1)(s+2)\cdots (s+n) \cdot \det (v)^{s}
 \qquad (v\in M(n)), 
\end{equation}
where $M(n)$ is the set of square matrices of degree $n$, and
$\det\left({\partial}/{\partial v}\right)$ is the differential
operator obtained by replacing each variable $v_{ij}$ in $\det(v)$
with $\partial/\partial v_{ij}$.
From a modern view point, 
the identity~\eqref{form:Cayley'sFormula} can be regarded
as an example of $b$-functions of prehomogeneous vector spaces.
In the present paper, we generalize \eqref{form:Cayley'sFormula} to
the relative invariants of 
the prehomogeneous vector spaces associated with
quivers of type $A$. For this purpose, we use the result of
\cite{SatoSugi}, which asserts that under certain conditions,
$b$-functions of reducible prehomogeneous vector spaces have
decompositions correlated to the decomposition of representations.
Moreover, in the latter half, we describe a graphical 
algorithm to calculate the $b$-functions of several variables.

Now what are relative invariants of 
prehomogeneous vector spaces associated with quivers of
type $A$? Let $Q$ be a quiver of type $A_{r}$.
In Introduction, {\it for simplicity}, we assume that $Q$
is of the following form:
\[
  Q: \quad \vt{1} \lra \vt{2}\lra \vt{3} \lra \cdots \lra \vt{r}
\]
Fix a dimension vector $\underline{n}=(n_{1},\dots, n_{r})\in
\Z_{>0}^{r}$ and put
\begin{align*}
 GL(\underline{n})&= GL(n_{1})\times GL(n_{2})\times\cdots \times
 GL(n_{r}), \\
 \Rep(Q, \underline{n}) &=
 M(n_{2}, n_{1})\oplus M(n_{3},n_{2})\oplus\cdots \oplus
 M(n_{r}, n_{r-1}).
\end{align*}
The action of 
$GL(\underline{n})$ on $\Rep(Q,\underline{n})$ is given as follows:
for $g = (g_{1},\dots, g_{r})\in GL(\underline{n})$ and
$v = \left(X_{2,1}, X_{3,2},\dots,
X_{r,r-1}\right)\in\Rep(Q,\underline{n})$, 
\[
 g\cdot v =
 \left(g_{2}X_{2,1}g_{1}^{-1}, \, g_{3}X_{3,2}g_{2}^{-1},\,
 \dots, g_{r}X_{r,r-1}g_{r-1}^{-1} \right). 
\]
Then $(GL(\underline{n}), \Rep(Q,\underline{n}))$ is a prehomogeneous
vector space, i.e.,  there exists an open dense $GL(\underline{n})$-orbit in
$\Rep(Q, \underline{n})$. 
Now we define the set $I_{\underline{n}}(Q)$ by
\begin{equation}
 \label{form:DefOfINQIntro}
 I_{\underline{n}}(Q) =
 \{(p,q)\, ;\, 1\leq p< q\leq r, \; n_{p}=n_{q} \;\;
 \text{and}\;\; n_{t}> n_{p} \;\;
 \text{for any} \;\; t=p+1,\dots, q-1\}.
\end{equation}
Then, for $(p,q)\in I_{\underline{n}}(Q)$,
\[
 f_{(p,q)}(v) = \det \left(X_{q,q-1}X_{q-1,q-2}\cdots X_{p+1,p}
 \right)
\]
is a non-zero relative invariant of
$(GL(\underline{n}), \Rep(Q, \underline{n}))$, and
$f_{(p,q)}(v)$ $((p,q)\in
I_{\underline{n}}(Q))$ are the fundamental relative invariants. 
These polynomials are known as ``determinantal semi-invariants''
(Schofield~\cite{Schofield}).
By the general theory of prehomogeneous vector spaces, there exists a
polynomial $b_{(p,q)}(s)\in \C[s]$ satisfying
\[
 f_{(p,q)}\left(\frac{\partial}{\partial v}\right)
 f_{(p,q)}(v)^{s+1} = b_{(p,q)}(s)\cdot
 f_{(p,q)}(v)^{s}.
\]
Our first result is the calculation of $b_{(p,q)}(s)$.
\begin{theorem}[Theorem{~\ref{thm:Bfun}}, equioriented case]
 \[
  b_{(p,q)}(s) = \prod_{t=p+1}^{q}
 \prod_{\lambda=1}^{n_{p}}(s+n_{t}-n_{p}+\lambda).
 \]
\end{theorem}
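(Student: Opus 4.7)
The proof proceeds by induction on the length $\ell := q-p$ of the chain, with the $b$-function decomposition formula of \cite{SatoSugi} as the main engine in the inductive step.

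\emph{Base case ($\ell = 1$).} The condition $(p,p+1) \in I_{\underline{n}}(Q)$ forces $n_p = n_{p+1}$, so $X_{p+1,p}$ is square of size $n_p$ and $f_{(p,p+1)}(v) = \det X_{p+1,p}$. Cayley's identity \eqref{form:Cayley'sFormula} immediately yields
\[
 b_{(p,p+1)}(s) \;=\; \prod_{\lambda=1}^{n_p}(s+\lambda),
\]
which matches the statement (the product in $t$ contains only the term $t = q = p+1$, where $n_t - n_p = 0$).

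\emph{Inductive step.} I would peel off one arrow at a time. Split $\Rep(Q,\underline{n}) = W_1 \oplus W_2$ with $W_1 = M(n_q, n_{q-1})$ carrying the last arrow and $W_2$ collecting the remaining arrows. Applying the decomposition formula of \cite{SatoSugi} factors $b_{(p,q)}(s)$ as a product of the $b$-functions attached to the two summands, with the appropriate argument shifts. For the $W_1$-piece, using $n_t > n_p$ for intermediate $t$, one freezes $Y := X_{q-1,q-2}\cdots X_{p+1,p} \in M(n_{q-1}, n_p)$ at a generic full-rank value; up to a $GL(n_{q-1})$-change of basis, $Y$ has an $n_p \times n_p$ identity block stacked over zeros, so that $\det(X_{q,q-1}\,Y)$ becomes the determinant of the first $n_p$ columns of $X_{q,q-1}$. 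Its $b$-function is the classical $\prod_{\lambda=1}^{n_p}(s+\lambda)$, precisely the $t=q$ factor of the claim. Iterating the reduction on $W_2$ is expected to produce the remaining $\ell-1$ factors, one per vertex $t$ that is peeled.

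\emph{Main obstacle.} The pair $(p,q-1)$ typically does \emph{not} lie in $I_{\underline{n}}(Q)$, since $n_{q-1} \neq n_p$ in general, so the inductive hypothesis cannot be applied verbatim on the $W_2$-piece. To circumvent this, the induction should be strengthened to cover $b$-functions of determinantal semi-invariants $\det(X_{b,b-1}\cdots X_{a+1,a})$ on rectangular chains (interpreted via maximal minors), or, equivalently, the peeling should be organised one row/column at a time, so that each step contributes exactly one factor $\prod_{\lambda=1}^{n_p}(s + n_t - n_p + \lambda)$ associated to the vertex $t$. In either formulation the shift $n_t - n_p$ arises as the excess dimension at the middle vertex $t$ relative to the endpoints, and additivity of these shifts across the $\ell$ iterations produces the claimed product. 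The hypotheses of the decomposition formula (reductive generic isotropy and compatibility of $f_{(p,q)}$ with the splitting) are verified at each stage from the transitivity of the $GL(\underline{n})$-action on full-rank chains of middle matrices, a direct consequence of $n_t > n_p$ for $p < t < q$.
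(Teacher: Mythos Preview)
Your overall plan---induction on $q-p$ with the decomposition formula from \cite{SatoSugi} driving each step---matches the paper's, but you have misread what that decomposition produces, and this is exactly what generates your ``Main obstacle''. The formula $b_f(s)=b_1(s)\,b_2(s)$ of \cite[Theorems~2.5, 2.6]{SatoSugi} is \emph{not} obtained by freezing each summand in turn and taking the $b$-function on the other; only $b_1$ arises that way, while $b_2$ is supplied directly by \cite[Theorem~3.3]{SatoSugi} and carries a dimension shift. Concretely (this is what the paper does, peeling from the left): take $F=M(n_{p+1},n_p)$, $E$ the remaining arrows, $GL(m)=GL(n_{p+1})$, $GL(n)=GL(n_p)$. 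Then $b_2(s)=\prod_{\lambda=1}^{n_p}(s+n_{p+1}-n_p+\lambda)$, which is the $t=p+1$ term of the claimed product, and $b_1(s)$ is the $b$-function of $f$ restricted to $E$ with $X_{p+1,p}$ frozen at ${}^t(E_{n_p}\mid 0)$. The key observation is that this restriction equals $\det(X_{q,q-1}\cdots X_{p+2,p+1}')$, where $X_{p+2,p+1}'$ denotes the first $n_p$ columns of $X_{p+2,p+1}$: this is again the determinantal invariant of an equioriented quiver, now with dimension vector $(n_p,n_{p+2},\dots,n_q)$. Both endpoints are still $n_p$ and the interior inequalities persist, so the pair is in $I$ for the shorter quiver and the induction applies verbatim, giving the reduction formula~\eqref{form:CutOffInEquiorientedCase}. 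Your obstacle never appears, because the reduced problem has dimension vector $(n_p,n_{p+2},\dots,n_q)$, not $(n_p,n_{p+1},\dots,n_{q-1})$; there is no need to strengthen the statement to rectangular chains.

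As a check that the ``$W_1$-piece'' you compute cannot be one of the two factors: already for $q=p+2$ the theorem gives $b_{(p,q)}(s)=\prod_{\lambda=1}^{n_p}(s+\lambda)\cdot\prod_{\lambda=1}^{n_p}(s+n_{p+1}-n_p+\lambda)$, whereas freezing either matrix variable and restricting to the other yields only $\prod_{\lambda=1}^{n_p}(s+\lambda)$. The product of the two restrictions misses the shift $n_{p+1}-n_p$ entirely, so ``restrict to $W_1$ times restrict to $W_2$'' is not the decomposition you want.
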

In Theorem{~\ref{thm:Bfun}}, the $b$-functions
$b_{(p,q)}(s)$ for quivers $Q$ of type $A$ with {\it arbitrary} orientation
are determined. This result is a simple application of the
decomposition formula found by  F.~Sato and the author~\cite{SatoSugi}. 

Our second result is on the $b$-functions of
several variables. See Lemma~\ref{prop:DefOfBfunSeverVari} for the
definition of $b$-functions of several variables.
In general, two different relative invariants may have common
variables, and this causes a serious combinatorial difficulty.
As is easily guessed from \eqref{form:DefOfINQIntro}, even in
the equioriented case, 
it is unbelievably tedious to enumerate all the patterns of
occurrence of relative invariants which have common variables. 
Instead of doing this, the author proposes an algorithm to use
diagrams, which are recently called ``lace diagrams''
(Buch-Rim\'{a}ny~\cite{BR},
Knutson-Miller-Shimozono~\cite{KMS}). To apply our algorithm, we draw
some diagram for each 
irreducible relative invariant, and then simply
{\it superpose} them. From the resultant diagram, the $b$-function
of several variables can be easily calculated. 
See Figures~\ref{fig:LDinFirstExample}, 
\ref{fig:SuperpositionInFirstExample}, 
\ref{figure:SecondExample}, and
\ref{fig:SuperpositionInSecondExample} in
Section~\ref{section:BfunctionsOfSeveralVariables}

We remark that in \cite{GS, Sevenheck}, the $b$-functions of linear free
divisors related to prehomogeneous vector spaces 
have been studied, and among them are the $b$-functions
arising from some quiver representations.
There are extensive studies on geometric structures of
$(GL(\underline{n}), \Rep(Q,\underline{n}))$, and
of further interest is to clarify the relation between
those studies and our results on $b$-functions. 

The plan of the present paper is as follows. In
Section~\ref{section:preliminaries}, we recall some basic definitions
and results, and in Section~\ref{section:RelativeInvariants},
we recall the results on
the explicit construction of relative invariants.
In Section~\ref{section:BfunOneVar}, we calculate the $b$-functions
of one variable (Theorem{~\ref{thm:Bfun}}), and in
Section~\ref{section:RepThOfQuiver}, our lace diagrams are defined.
Our algorithm to calculate $b$-functions of several variables with
diagrams is explained in Section~\ref{section:BfunctionsOfSeveralVariables},
and the following sections 
(Sections~\ref{section:Afunctions} and \ref{section:Proof}) 
are devoted to its proof. 

\bigskip

\noindent
{\bf Acknowledgement}. I dedicate this paper for the 60th birthday
of my adviser, Professor Tatsuo Kimura. I have had the good
fortune and privilege to be one of his students.
Also thanks are due to the referee for several valuable
suggestions. In particular, he/she has pointed out that
Lemma~\ref{lemma:Restriction} in the original manuscript can be
generalized to arbitrary quivers.
This research was partially supported by the Grant-in-Aid for
Scientific Research (No.\ 19740003) from Japan Society for the Promotion
of Science.

\bigskip

\noindent
{\bf Notation}. 
As usual, $\Z$, $\Q$, $\R$, and $\C$
stand for the ring of rational integers, the field of rational numbers,
the field of real numbers, and the field of complex numbers,
respectively.
For positive integers $m, n$, we denote by $M(m,n)$ the totality of
$m\times n$ complex matrices, and by $0_{m, n}$ the $m\times n$ zero
matrix.
However, we write simply $M(m)$ and $0_{m}$ instead of $M(m,m)$
and $0_{m, m}$, respectively. 
Further, we denote by $E_{m}$ the identity matrix of size $m$. 

\section{Preliminaries}
\label{section:preliminaries}

A {\it quiver} is an oriented graph $Q = (Q_{0}, Q_{1})$ consisting
of a set $Q_{0}$ of vertices and a set $Q_{1}$ of arrows.
Each arrow $a\in Q_{1}$ has a tail $t(a)\in Q_{0}$ and
a head $h(a)\in Q_{0}$. Throughout the present paper,
except in Section~\ref{section:Proof}, 
we consider
quivers of type $A$. Let $Q$ be a quiver of type $A_{r}$, i.e.,
a chain of $r$-vertices and arrows between them:
\[
 Q: \quad \vt{1} \lla \vt{2}\lla \vt{3} \lra \cdots \lla \vt{r}
\]
We identify the vertex and arrow sets with integral intervals, as
\[
 Q_{0} =\{1,\dots, r\}, \qquad Q_{1}=\{1,2,\dots, r-1\}
\]
such that $\{t(a), h(a)\}= \{a, a+1\}$ for
each $a\in Q_{1}$. We also set $\delta(a) =
h(a)-t(a)$, which is equal to $-1$ for a {\it leftward}
arrow and $+1$ for a {\it rightward} arrow.

Fix a dimension vector $\underline{n} =(n_{1},\dots, n_{r})\in
\Z_{>0}^{r}$ and let $L_{i}$ be a vector space of dimension
$n_{i}$. The set of quiver representations with dimension vector
$\underline{n}$ forms the affine space
\[
 \Rep(Q,\underline{n}) =
 \Hom(L_{t(1)}, L_{h(1)})\oplus
 \Hom(L_{t(2)}, L_{h(2)})\oplus \cdots
 \oplus \Hom(L_{t(r-1)}, L_{h(r-1)}),
\]
on which the group $GL(\underline{n}) =
GL(L_{1})\times \cdots \times GL(L_{r})$ has a natural action. 
We choose a basis of $L_{i}$ and identify
\[
 \begin{array}{ll}
 GL(L_{i})\cong
GL(n_{i}) \qquad  &(i=1,\dots, r), \\
 \Hom(L_{t(a)}, L_{h(a)})\cong M(n_{h(a)}, n_{t(a)})\qquad 
&(a=1,\dots, r-1).  
\end{array}
\]
Then, for $g=(g_{i})_{1\leq i\leq r}\in GL(\underline{n})$ and
$v = (X_{h(a), t(a)})_{1\leq a\leq r-1}\in \Rep(Q,\underline{n})$,
 the action is given explicitly by
\[
 g\cdot v = \left( g_{h(a)} X_{h(a), t(a)} \, g_{t(a)}^{-1}
 \right)_{1\leq a\leq r-1}.
\]

\begin{example}
 \label{exmp:FirstExample}
 Let us consider an {\it equioriented} quiver of type $A_{5}$.
 \[
  Q:\quad  \vt{1}\lra \vt{2}\lra \vt{3} \lra \vt{4}\lra \vt{5} 
 \]
 For $\underline{n} = (n_{1}, \dots, n_{5})\in
 \Z_{>0}^{5}$,
 $GL(\underline{n})$ and $\Rep(Q,\underline{n})$ are given by
 \begin{align*}
  GL(\underline{n}) &= GL(n_{1})\times GL(n_{2})\times GL(n_{3})\times
 GL(n_{4})\times GL(n_{5}) , \\
  \Rep(Q,\underline{n}) &= M(n_{2}, n_{1})\oplus M(n_{3}, n_{2})\oplus
  M(n_{4}, n_{3})\oplus M(n_{5}, n_{4}),
 \end{align*}
 and for 
 $g = (g_{1},\dots, g_{5})\in GL(\underline{n})$ and
 $v = (X_{2, 1}, \, X_{3,2}, \, X_{4, 3}, \, X_{5, 4})\in
 \Rep(Q,\underline{n})$, we have
 \[
  g\cdot v =
 (g_{2} X_{2, 1} g_{1}^{-1}, \, g_{3} X_{3, 2} g_{2}^{-1}, \,
 g_{4} X_{4, 3} g_{3}^{-1}, \, g_{5} X_{5, 4} g_{4}^{-1}).
 \]
\end{example}

\begin{remark}
 When $Q$ is equioriented, 
 $(GL(\underline{n}), \Rep(Q, \underline{n}))$ can be regarded
 as a prehomogeneous vector space of parabolic type arising from
 a special linear Lie algebra $\mathfrak{sl}(N)$ with
 $N=n_{1}+\cdots+ n_{r}$ (Rubenthaler~\cite{Rubenthaler},
 Mortajine~\cite{Mortajine}). 
\end{remark}

\begin{example}
 \label{exmp:SecondExample}
 Let us consider an alternating quiver of type $A_{5}$.
 \[
   Q:\quad  \vt{1}\lra \vt{2}\lla \vt{3} \lra \vt{4}\lla \vt{5} 
 \]
 For $\underline{n} = (n_{1}, \dots, n_{5})\in \Z_{>0}^{5}$, 
 $GL(\underline{n})$ and $\Rep(Q,\underline{n})$ are given by
 \begin{align*}
  GL(\underline{n}) &= GL(n_{1})\times GL(n_{2})\times GL(n_{3})\times
 GL(n_{4})\times GL(n_{5}) , \\
  \Rep(Q,\underline{n}) &= M(n_{2}, n_{1})\oplus M(n_{2}, n_{3})\oplus
  M(n_{4}, n_{3})\oplus M(n_{4}, n_{5}),
 \end{align*}
 and for 
 $g = (g_{1}, \dots, g_{5})\in GL(\underline{n})$
 and
 $v = (X_{2, 1}, \, X_{2,3}, \, X_{4, 3}, \, X_{4, 5})\in
 \Rep(Q,\underline{n})$, we have
 \[
  g\cdot v =
 (g_{2} X_{2, 1} g_{1}^{-1}, \, g_{2} X_{2, 3} g_{3}^{-1}, \,
 g_{4} X_{4, 3} g_{3}^{-1}, \, g_{4} X_{4, 5} g_{5}^{-1}).
 \]
\end{example}

Two elements $v, v'\in \Rep(Q,\underline{n})$ belong to the
same $GL(\underline{n})$-orbit if and only if $v$ and $v'$ are
isomorphic as representations of the quiver $Q$, and the
isomorphism classes of representations of $Q$ can be
classified by the  {\it indecomposable decompositions}.
(For basic terminology on quiver representations, we refer to
\cite{Brion, DerksenWeyman}.) 
It is well known that for the Dynkin quivers, the isomorphism classes
of the indecomposable
representations correspond to the positive roots of the corresponding
root systems. 
For the quiver of type $A_{r}$, there is an indecomposable
representation $I_{ij}$ for each pair of integers $(i, j)$
with $1\leq i\leq j\leq r$. 
The dimension vector of $I_{ij}$ assigns the dimension $1$ to
all vertices $k\in Q_{0}$ if $i\leq k\leq j$, and
the dimension $0$ otherwise. For each arrow $a\in Q_{1}$
with $i\leq a< j$, the map $I_{ij}^{(a)}:\C\to \C$ is the
identity. Then, as a representation of the quiver $Q$, 
any $v\in \Rep(Q,\underline{n})$ can be decomposed
into the direct sum of indecomposable representations as
\begin{equation}
 \label{form:IndecomposableDecomposition}
 v\cong \bigoplus_{1\leq i\leq j\leq r}
m_{ij} I_{ij},
\end{equation}
and the multiplicities $m_{ij}$ are uniquely determined. 
Hence $\Rep(Q,\underline{n})$ decomposes into a finite number of
$GL(\underline{n})$-orbits. In particular,
$(GL(\underline{n}), \Rep(Q,\underline{n}))$ is a prehomogeneous
vector space.

Now we give a brief review on basic properties of
prehomogeneous vector spaces. We refer to \cite{Nonreg},
\cite[Chapter 2]{PVBook}, \cite{KashiwaraBook} for the details. 

Let $G$ be a connected algebraic group and  $\rho:G \to GL(V)$  a
rational representation of $G$ on a finite dimensional vector space
$V$.
The triplet $(G,\rho,V)$ is called a \textit{prehomogeneous vector
space} if $V$ has an open dense $G$-orbit, say $O_{0} = \rho(G)v_{0}$. Let
$f$ be a non-zero rational function on $V$ and $\chi\in
\Hom(G,\mathbb{C}^{\times})$. Then we call $f$ a
\textit{relative invariant} with character $\chi$ if
$f(\rho(g)v) = \chi(g)f(v)$ for all $g\in G$ and $v\in O_0$.
If $f_{1}$ and $f_{2}$ are relative invariants which
correspond to the same character, then $f_{2}$ is a
constant multiple of $f_{1}$.

We denote by $S_{1},\dots,S_{l}$ the irreducible components of $V
\backslash O_{0}$ with codimension one, and let $f_{i}$ $(1\leq i\leq
l)$ be an irreducible polynomial defining $S_i$.
Then $f_{1},\dots,f_{l}$ are algebraically independent relative
invariants. Furthermore, every relative invariant $f$ is of the form $f
= c f_{1}^{m_{1}} 
 \cdots f_{l}^{m_{l}} \; (c\in \mathbb{C}^{\times}, m_{i}\in \mathbb{Z})$. 
We call  $f_{1},\dots,f_{l}$ the \textit{fundamental relative invariants}.

A prehomogeneous vector space $(G,\rho, V)$ is called {\it reductive}
if $G$ is a reductive algebraic group.
Now we assume that $(G,\rho, V)$ is 
a reductive prehomogeneous vector space which has 
a relatively invariant polynomial $f$ 
with character $\chi$.  
Let $d= \deg f$, $n= \dim V$. 
We denote by $V^{*}$ be the dual space of $V$, and by
$\rho^{*}:G\to GL(V^{*})$ the contragredient
representation of $\rho$. 
Then the dual triplet $(G,\rho^{*},V^{*})$ is a prehomogeneous vector
space and has a relatively invariant
polynomial $f^{*}$ of degree $d$ with character $\chi^{-1}$. 

Fix a basis $\{e_{1},\dots, e_{n}\}$ of $V$ and let
$v = (v_{1}, \dots, v_{n})$ be the coordinate system of $V$
with respect to this basis. We identify $V$ with $\C^{n}$.
Let $\{e_{1}^{*},\dots, e_{n}^{*}\}$ be the dual basis of
$\{e_{1}, \dots, e_{n}\}$ and $v^{*} =
(v_{1}^{*}, \dots, v_{n}^{*})$ be the coordinate system
of $V^{*}$ with respect to the dual basis. 
We also identify $V^{*}$ with $\C^{n}$.

\begin{lemma}
 \label{prop:b-funOf1var}
 There exists a polynomial $b_{f}(s) = b_{0}s^{d} + b_{1}s^{d-1} +
 \cdots + b_{d}\in \mathbb{C}[s]$ with $b_{0} \neq 0$ such that
 \[
  f^{*}(\grad_{v})f(v)^{s+1} = b_{f}(s)f(v)^{s}, 
 \]
 where $\grad_{v}$ is given by
 \[
  \grad_{v}= \left(\frac{\partial}{\partial v_{1}}, \cdots,
 \frac{\partial}{\partial v_{n}}\right).
 \]
\end{lemma}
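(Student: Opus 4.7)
My plan is to prove existence and the polynomial nature of $b_f(s)$ by combining a direct chain-rule calculation with the relative-invariance of $f$ and $f^*$ under the $G$-action, and then to use the open-orbit hypothesis to turn a $G$-invariant rational function into a constant. The structure will be: (i) compute $f^*(\grad_v)f(v)^{s+1}$ to the form $P(s,v)\, f(v)^{s+1-d}$; (ii) use relative invariance to show the quotient $P(s,v)/f(v)^{d-1}$ does not depend on $v$; (iii) identify the leading coefficient and verify $b_0\neq 0$.

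First I would iterate the identity $\partial_i f^{s+1} = (s+1)f^s\,\partial_i f$. Applying $d$ partial derivatives and grouping by how many of them land on an outer power of $f$, one sees that any constant-coefficient differential operator of order $d$ sends $f^{s+1}$ to a sum of terms of the form $(s+1)s(s-1)\cdots(s+2-k)\, f^{s+1-k}\,Q_k(v)$ for $1\leq k\leq d$ with $Q_k\in\C[v]$. Consequently
\[
 f^*(\grad_v)\, f(v)^{s+1} = P(s,v)\, f(v)^{s+1-d}, \qquad P(s,v)\in\C[s,v],\ \deg_s P\leq d,
\]
and the coefficient of $s^d$ in $P(s,v)$ equals $f^*(\grad f(v))$. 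Equivalently, setting $F_s(v):=f^*(\grad_v)f(v)^{s+1}$, we have $F_s(v)/f(v)^s = P(s,v)/f(v)^{d-1}$, a priori a rational function of $v$ depending polynomially on $s$.

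Next I would show this rational function is $G$-invariant. For $g\in G$, the chain rule gives $\grad_v[F(\rho(g)v)] = \rho(g)^{T}(\grad_w F)\bigl|_{w=\rho(g)v}$, while the relative invariance of $f^{*}$ with character $\chi^{-1}$ under $\rho^{*}(g)=\rho(g)^{-T}$ translates into the operator identity $f^{*}(\rho(g)^{T}\grad_w)=\chi(g)\,f^{*}(\grad_w)$. Computing $f^{*}(\grad_v)[f(\rho(g)v)^{s+1}]$ in two ways (differentiating after using $f(\rho(g)v)^{s+1}=\chi(g)^{s+1}f(v)^{s+1}$ directly, versus substituting $w=\rho(g)v$ first) then yields
\[
 F_s(\rho(g)v) = \chi(g)^{s}\, F_s(v).
\]
Since $f(\rho(g)v)^{s}=\chi(g)^{s}f(v)^{s}$, the ratio $b(s,v):=F_s(v)/f(v)^{s}$ is $G$-invariant. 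Because $V$ has an open dense $G$-orbit $O_0$ and $b(s,\cdot)$ is rational, it must be constant on $O_0$, hence a function of $s$ alone; call it $b_f(s)$. Substituting any $v_0\in O_0$ shows $b_f(s)=P(s,v_0)/f(v_0)^{d-1}\in\C[s]$ with $\deg b_f\leq d$, and the leading coefficient is $b_0 = f^{*}(\grad f(v_0))/f(v_0)^{d-1}$.

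The remaining, and most delicate, point will be to verify $b_0\neq 0$; equivalently, that $f^{*}(\grad f(v_0))$ does not vanish on the open orbit. For this I would invoke the standard duality of reductive prehomogeneous vector spaces: the logarithmic-gradient map $v\mapsto \grad\log f(v) = \grad f(v)/f(v)$ sends $O_0$ into the open $G$-orbit $O_0^{*}$ of $(G,\rho^{*},V^{*})$ (a result available in the references already cited in the excerpt). Since $f^{*}$ is a relative invariant and therefore non-vanishing on $O_0^{*}$, and since by homogeneity $f^{*}(\grad f(v_0)) = f(v_0)^{d}\, f^{*}\!\bigl(\grad f(v_0)/f(v_0)\bigr)$, we conclude $b_0\neq 0$. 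This duality step is the only place where reductivity is genuinely used, and I expect it to be the main obstacle; everything else is bookkeeping with the chain rule and relative invariance.
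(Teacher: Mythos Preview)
The paper does not supply its own proof of this lemma; it is stated as a known foundational result with references to \cite{Nonreg}, \cite[Chapter~2]{PVBook}, and \cite{KashiwaraBook}. Your argument is correct and is essentially the standard proof found in those sources (in particular Kimura's book): compute $f^{*}(\grad_v)f^{s+1}$ by iterated chain rule, use relative invariance to show the quotient by $f^s$ is $G$-invariant and hence constant on the open orbit, and then identify the top coefficient via the duality map $v\mapsto\grad\log f(v)$ from $O_0$ to $O_0^{*}$, which is exactly where reductivity enters. There is nothing to add or correct.
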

We call $b_{f}(s)$ the {\it $b$-function} of $f$.
By Kashiwara~\cite[Theorem~6.9]{KashiwaraBook}, 
every root of the $b$-function $b_{f}(s)$ is a
negative rational number .

Finally, we recall the definition of $b$-functions of several variables
(cf. M.~Sato~\cite[Proposition~14]{EnAyumi}).
Let $(G, \rho, V)$ be a reductive prehomogeneous vector space
and $f_{1},\dots, f_{l}$ the fundamental relative invariants.
Let $f_{1}^{*},\dots, f_{l}^{*}$ the fundamental relative invariants
of the dual prehomogeneous vector space $(G,\rho^{*}, V^{*})$
such that the characters of $f_{i}$ and $f_{i}^{*}$ are the
inverse of each other. We put
$\underline{f} = (f_{1},\dots, f_{l})$ and
$\underline{f}^{*} = (f_{1}^{*},\dots, f_{l}^{*})$. 
For a multi-variable $\underline{s} = (s_{1},\dots, s_{l})$,
we define their powers  by
$\underline{f}^{\underline{s}} = \prod_{i=1}^{l} f_{i}^{s_{i}}$
and
$\underline{f}^{*\underline{s}} =\prod_{i=1}^{l} f_{i}^{* s_{i}}$.
\begin{lemma}
 \label{prop:DefOfBfunSeverVari}
 For any $l$-tuple $\underline{m} = (m_{1},\dots, m_{l})\in
 \Z_{\geq 0}^{l}$, there exists a non-zero polynomial
 $b_{\underline{m}}(\underline{s})$ of $s_{1},\dots, s_{l}$
 satisfying
 \[
  \underline{f}^{*\underline{m}}(\grad_{v})
 \underline{f}^{\underline{s} +\underline{m}}(v) =
 b_{\underline{m}}(\underline{s})\, \underline{f}^{\underline{s}}(v).
 \]
 Here $b_{\underline{m}}(\underline{s})$
 is is independent of $v$.
\end{lemma}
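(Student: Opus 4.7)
The plan is to follow the standard two-step argument that extends the single-variable proof of Lemma~\ref{prop:b-funOf1var} to several variables. What needs to be verified is (i) that the quotient $\underline{f}^{*\underline{m}}(\grad_v)\underline{f}^{\underline{s}+\underline{m}}(v)/\underline{f}^{\underline{s}}(v)$ is independent of $v$, so that it defines a function $b_{\underline{m}}(\underline{s})$; and (ii) that this function is a polynomial in $\underline{s}$.

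For (i), let $\chi_i$ denote the character of $f_i$, so that $f_i^*$ has character $\chi_i^{-1}$. I would first work out how the operator $\underline{f}^{*\underline{m}}(\grad_v)$ transforms when $v$ is replaced by $\rho(g)v$. Setting $y = \rho(g)v$, the chain rule shows that for any polynomial $P$ on $V^*$,
\[
P(\grad_v)\bigl[\phi(\rho(g)v)\bigr] = \bigl[(g\cdot P)(\grad_y)\,\phi(y)\bigr]_{y=\rho(g)v},
\]
where $g\cdot P$ denotes the action of $g$ on $P$ via the contragredient representation. Since $f_i^*$ is relatively invariant with character $\chi_i^{-1}$, this yields $g\cdot \underline{f}^{*\underline{m}} = \prod_{i=1}^{l}\chi_i(g)^{m_i}\,\underline{f}^{*\underline{m}}$. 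Combining this with $\underline{f}^{\underline{s}+\underline{m}}(\rho(g)v) = \prod_{i=1}^{l}\chi_i(g)^{s_i+m_i}\,\underline{f}^{\underline{s}+\underline{m}}(v)$ and setting $R(v, \underline{s}) := \underline{f}^{*\underline{m}}(\grad_v)\underline{f}^{\underline{s}+\underline{m}}(v)$, equating the two computations of $\underline{f}^{*\underline{m}}(\grad_v)\bigl[\underline{f}^{\underline{s}+\underline{m}}(\rho(g)v)\bigr]$ yields $R(\rho(g)v,\underline{s}) = \prod_{i=1}^{l}\chi_i(g)^{s_i}\,R(v,\underline{s})$. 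Hence $R(v,\underline{s})/\underline{f}^{\underline{s}}(v)$ is $G$-invariant on the open orbit, and since $V$ is prehomogeneous every $G$-invariant rational function on $V$ is constant; so the ratio depends only on $\underline{s}$.

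For (ii), direct expansion using
\[
\frac{\partial}{\partial v_j}\,\underline{f}^{\underline{s}+\underline{m}}(v) = \underline{f}^{\underline{s}+\underline{m}}(v)\sum_{i=1}^{l}(s_i+m_i)\,\frac{1}{f_i(v)}\frac{\partial f_i}{\partial v_j}(v),
\]
iterated $\sum_i m_i\deg f_i^*$ times, gives $\underline{f}^{*\underline{m}}(\grad_v)\,\underline{f}^{\underline{s}+\underline{m}}(v) = \underline{f}^{\underline{s}+\underline{m}}(v)\cdot P(\underline{s}, v)$ with $P(\underline{s},v)$ polynomial in $\underline{s}$ and rational in $v$. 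Dividing by $\underline{f}^{\underline{s}}(v)$ leaves $\underline{f}^{\underline{m}}(v)\,P(\underline{s}, v)$, which for each fixed $v$ in the open orbit with $f_i(v)\neq 0$ for all $i$ is a polynomial in $\underline{s}$; by (i) this value is independent of $v$, so $b_{\underline{m}}(\underline{s})$ is a polynomial. The main difficulty lies in step (i), specifically in pinning down the correct character by which $\underline{f}^{*\underline{m}}(\grad_v)$ twists under $v\mapsto \rho(g)v$; once this transformation law is established, the cancellation of characters and the polynomiality argument in (ii) are essentially routine.
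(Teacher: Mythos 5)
Your two-step argument is the standard proof behind the citation the paper gives in place of a proof (M.~Sato~\cite[Proposition~14]{EnAyumi}), and its core is sound: since $\rho^{*}(g)^{-1}={}^{t}\rho(g)$, your $g\cdot P$ is indeed the contragredient action on polynomials on $V^{*}$, the transformation law $g\cdot\underline{f}^{*\underline{m}}=\prod_{i}\chi_{i}(g)^{m_{i}}\underline{f}^{*\underline{m}}$ is correct, and equating the two computations gives $R(\rho(g)v,\underline{s})=\prod_{i}\chi_{i}(g)^{s_{i}}R(v,\underline{s})$, so that constancy of $G$-invariant rational functions on a prehomogeneous space (equivalently, uniqueness of relative invariants with a given character) yields independence of $v$; the logarithmic-derivative expansion in (ii) then gives polynomiality in $\underline{s}$. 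One routine point you leave implicit: in (i) the expression $\underline{f}^{\underline{s}+\underline{m}}(\rho(g)v)$ with formal $\underline{s}$ needs interpretation; the usual remedy is to run (i) for $\underline{s}\in\Z_{\geq 0}^{l}$ and use the polynomiality from (ii) to extend the identity to all $\underline{s}$, since a polynomial identity valid on $\Z_{\geq 0}^{l}$ holds identically.

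There is, however, one genuine omission: the lemma asserts that $b_{\underline{m}}(\underline{s})$ is a \emph{non-zero} polynomial, and nothing in your argument rules out $b_{\underline{m}}\equiv 0$. Indeed, your two steps never use reductivity of $G$, which is precisely the hypothesis on which non-vanishing rests. The standard completion is to compare top-degree terms in $\underline{s}$ in your expansion from step (ii): the highest homogeneous component of $b_{\underline{m}}(\underline{s})$ is
\[
 \underline{f}^{\underline{m}}(v)\,
 \underline{f}^{*\underline{m}}\bigl(\grad\log \underline{f}^{\underline{s}}(v)\bigr),
\]
which is exactly the $a$-function $a_{\underline{m}}(\underline{s})$ of Lemma~\ref{lemma:Afunctions}, and for a \emph{reductive} prehomogeneous vector space the map $\grad\log\underline{f}^{\underline{s}}$ carries a generic point $v$ (for generic $\underline{s}$) into the open set where $\underline{f}^{*}$ does not vanish, so $a_{\underline{m}}\neq 0$ and hence $b_{\underline{m}}\neq 0$. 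Without an argument of this kind, the existence of a non-zero polynomial satisfying the displayed identity — which is what the lemma actually claims, and what the paper later relies on when specializing to $b_{f_{i}}(s)$ — is not fully established.
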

We call $b_{\underline{m}}(\underline{s})$ the {\it $b$-function}
of $\underline{f} = (f_{1},\dots, f_{l})$.
We easily see that $b_{\underline{f}^{\underline{m}}}(s) =
b_{\underline{m}}(\underline{m} s)$, and thus
$b_{f_{i}}(s) \; (i=1,\dots, l)$ is a specialization of
$b_{\underline{m}}(\underline{s})$.

\section{Relative invariants}
\label{section:RelativeInvariants}

In this section, we describe a condition for
$(GL(\underline{n}), \Rep(Q,\underline{n}))$
with $Q$ being of type $A$ to have
relative invariants and give their explicit construction.
For the details, see Abeasis~\cite{Abeasis},
Koike~\cite{Koike}. 
Note that 
Schofield~\cite{Schofield} constructed relative invariants
for general quivers $Q$.
 (see Section~\ref{section:Proof}.)

Let $Q$ be a quiver of type $A_{r}$ with arbitrary orientation.
The orientation of $Q$ is determined by the sequence
\[
 \{1=\nu(0)< \nu(1)< \nu(2)<\cdots <\nu(h)
< \nu(h+1)= r\}
\]
which consists of the sinks and sources of $Q$. 
Note that
if $Q^{*}$ is the quiver obtained from $Q$ by reversing all the arrows,
then $(GL(\underline{n}), \Rep(Q^{*}, \underline{n}))$ is the
dual prehomogeneous vector space
$(GL(\underline{n}), \Rep(Q, \underline{n})^{*})$. 

Now we fix a dimension vector $\underline{n}=(n_{1},\dots, n_{r})$
and consider the fundamental relative invariants of
$(GL(\underline{n}), \Rep(Q, \underline{n}))$.
First, for a given pair $(p, q)$ with $1\leq p< q\leq r$, we define
indices $\alpha=\alpha(p,q)$ and $\beta = \beta (p,q)$ by
the conditions
\[
 \nu(\alpha-1)\leq p< \nu(\alpha), \qquad
 \nu(\beta)<q\leq \nu(\beta+1).
\]
When $p, q$ are clear from the context, we just write
$\alpha, \beta$ instead of $\alpha(p,q), \beta(p,q)$.
Then $I_{\underline{n}}(Q)$ is defined to be  the totality 
of pairs $(p, q)$ with $1\leq p<q\leq r$ which satisfy the following
conditions
(I1)$\sim$(I4):
\begin{enumerate}
 \item[(I1)] For $t$ with $p<t\leq \nu(\alpha)$, we have
	     $n_{t}> n_{p}$,
 \item[(I2)] For $\kappa=0,1,\dots, \beta-\alpha-1$ and $t$ with
	     $\nu(\alpha+\kappa)< t\leq \nu(\alpha+\kappa+1)$,
	     we have
	     \[
	      n_{t}> n_{\nu(\alpha+\kappa)}-
	     n_{\nu(\alpha+\kappa-1)} +\cdots+
	     (-1)^{\kappa}n_{\nu(\alpha)} + (-1)^{\kappa+1}
	     n_{p},
	     \]
 \item[(I3)] For $t$ with $\nu(\beta)<t<q$, we have
	     \[
	      n_{t}> n_{\nu(\beta)} -
	     n_{\nu(\beta-1)} + \cdots + (-1)^{\beta-\alpha}
	     n_{\nu(\alpha)} + (-1)^{\beta-\alpha+1}n_{p},
	     \]
 \item[(I4)] $n_{q} = n_{\nu(\beta)} -
	     n_{\nu(\beta-1)} + \cdots + (-1)^{\beta-\alpha}
	     n_{\nu(\alpha)} + (-1)^{\beta-\alpha+1}n_{p}.$
\end{enumerate}

By Abeasis~\cite{Abeasis}, we have the following lemma. 

\begin{lemma}
 \label{thm:Abeasis}
 There exists a one-to-one correspondence between
 $I_{\underline{n}}(Q)$ and the set of
 $GL(\underline{n})$-orbits in
 $\Rep(Q, \underline{n})$ of codimension one.
 In particular, there exists a one-to-one correspondence
 between  $I_{\underline{n}}(Q)$ and 
 the fundamental  relative invariants of $(GL(\underline{n}), \Rep
 (Q,\underline{n}))$.
\end{lemma}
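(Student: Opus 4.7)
The plan is to establish the first bijection by analyzing the orbit poset of $\Rep(Q,\underline{n})$ and then to deduce the second bijection from general prehomogeneous-vector-space theory.

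By the indecomposable decomposition~\eqref{form:IndecomposableDecomposition}, every $GL(\underline{n})$-orbit in $\Rep(Q,\underline{n})$ is parameterised by a nonnegative multiplicity tuple $\underline{m}=(m_{ij})_{1\le i\le j\le r}$ subject to the linear constraints
\[
 \sum_{i\le k\le j} m_{ij} \;=\; n_{k}\qquad(k=1,\dots,r).
\]
I would first write down the multiplicity tuple $\underline{m}^{\mathrm{gen}}$ corresponding to the open orbit $O_{0}$: its entries are explicit alternating linear combinations of the $n_{k}$ dictated by the sink/source sequence $\nu(0)<\nu(1)<\cdots<\nu(h+1)$. These are most easily obtained by computing ranks of the ``zig-zag'' composite maps built from the $X_{h(a),t(a)}$ at a generic point, or by an Euclidean-type reduction on the representations.

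Next, using the formula $\mathrm{codim}\,O_{\underline{m}}=\dim\Ext^{1}(V_{\underline{m}},V_{\underline{m}})$ (where $V_{\underline{m}}=\bigoplus m_{ij}I_{ij}$) and the well-known expression of $\Ext^{1}(I_{ij},I_{kl})$ for type $A$ in terms of interval overlaps, I would classify the codimension-one orbits. The claim is that each such orbit is obtained from $O_{0}$ by a single \emph{minimal} degeneration, which can be encoded as a replacement $I_{pq'}\oplus I_{p'q}\mapsto I_{pq}\oplus I_{p'q'}$ along the interval $[p,q]$; the combinatorial constraint that the resulting tuple be nonnegative and jump the codimension by exactly~$1$ translates precisely into conditions (I1)--(I4). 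Specifically, (I4) is the bookkeeping identity forcing the alternating sum of dimensions along the zig-zag from $p$ to $q$ to equal $n_{q}$, so that the proposed multiplicity tuple is realisable; (I1)--(I3) are the strict inequalities ensuring that no smaller such interval $[p,q']$ or $[p',q]$ is already available, which is what makes the degeneration minimal and thus of codimension exactly one. Conversely, any minimal degeneration must land on such an interval, yielding the reverse direction.

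Having identified $I_{\underline{n}}(Q)$ with the set of codimension-one orbits, the second bijection is immediate from the standard prehomogeneous-vector-space material recalled just before Lemma~\ref{prop:b-funOf1var}: the irreducible components $S_{1},\dots,S_{l}$ of $V\setminus O_{0}$ of codimension one are precisely the closures of the codimension-one orbits, and the associated irreducible defining polynomials are, up to scalar, the fundamental relative invariants. The main obstacle will be the combinatorial verification that the numeric conditions (I1)--(I4) exhaust precisely the codimension-one degenerations of $\underline{m}^{\mathrm{gen}}$ without double counting, particularly when $Q$ has several sinks and sources so that the alternating sums in (I2)--(I4) must be tracked carefully along the full zig-zag between $p$ and $q$.
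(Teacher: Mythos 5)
The paper offers no argument for this lemma at all: it is quoted directly from Abeasis~\cite{Abeasis}, so the comparison here is with that classification rather than with an in-paper proof. Your skeleton is the right one, and in outline it matches what Abeasis actually does: orbits correspond to multiplicity tuples $(m_{ij})$ with $\sum_{i\le k\le j}m_{ij}=n_{k}$; the codimension of the orbit of $V_{\underline{m}}$ is $\dim\Ext^{1}(V_{\underline{m}},V_{\underline{m}})$ (Voigt's lemma, valid here since there are finitely many orbits); and once the first bijection is in hand, the second follows immediately from the prehomogeneous-vector-space dictionary recalled before Lemma~\ref{prop:b-funOf1var}, because each irreducible codimension-one component of $V\setminus O_{0}$ is the closure of a codimension-one orbit. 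That last reduction in your proposal is correct and complete.

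The gap is that the heart of the lemma is asserted rather than proved. You never compute the generic tuple $\underline{m}^{\mathrm{gen}}$, never show that every tuple with $\dim\Ext^{1}(V_{\underline{m}},V_{\underline{m}})=1$ compatible with $\underline{n}$ arises from $\underline{m}^{\mathrm{gen}}$ by a single exchange, and never derive the conditions (I1)--(I4) from realisability and minimality --- your own text flags this ("The claim is that\dots", "The main obstacle will be\dots"), which concedes the pivotal step. Two concrete points where the sketch as stated would need repair: first, the move $I_{pq'}\oplus I_{p'q}\mapsto I_{pq}\oplus I_{p'q'}$ is too narrow unless you explicitly allow empty intervals --- e.g.\ for $f_{(3,4)}=\det X_{4,3}$ in Example~\ref{exmp:FirstExampleII}, the subgeneric orbit is reached by splitting a single strand $I_{ab}$ crossing the arrow between vertices $3$ and $4$ into $I_{a,3}\oplus I_{4,b}$, which is your exchange only with the degenerate convention $[p',q']=\emptyset$; second, $\dim\Ext^{1}=1$ requires not just locating one extension between intervals but also ruling out self-extensions, multiplicities $\geq 2$ on the interacting summands, and double counting (distinct $(p,q)$ giving the same tuple), none of which is addressed. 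Carrying out exactly this bookkeeping for arbitrary orientation, with the alternating sums in (I2)--(I4) emerging from solving the multiplicity constraints along the zig-zag between $p$ and $q$, is precisely the content of Abeasis's theorem; without it your proposal is a plausible program, not a proof.
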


The explicit construction of an irreducible relative invariant
corresponding to $(p,q)\in I_{\underline{n}}(Q)$ is given as follows.
When there exist no sink and source between two vertices
$\mu, \nu\; (\mu< \nu)$ of $Q$, either the following (a) or (b) holds:
\begin{eqnarray*}
 &{\rm (a)}& \qquad \vt{\mu}\lra \vt{\mu+1} \lra \cdots \lra
  \vt{\nu-1}\lra \vt{\nu} \\
 &{\rm (b)}& \qquad \vt{\mu}\lla \vt{\mu+1} \lla \cdots \lla
  \vt{\nu-1}\lla \vt{\nu} 
\end{eqnarray*}
In the case of (a), we put
\[
 X_{\nu,\mu} = X_{\nu, \nu-1} X_{\nu-1,\nu-2}\cdots X_{\mu+1, \mu},
\]
and in the case of (b), we put
\[
 X_{\mu,\nu}=X_{\mu, \mu+1}X_{\mu+1, \mu+2}\cdots X_{\nu-1, \nu}.
\]
Now suppose that $p$ is a source and $q$ is a sink.
\[
 \vt{p}\lra \cdots\lra \vt{\nu(\alpha)} \lla \vt{\nu(\alpha)+1}\lla
 \cdots \lla \vt{\nu(\alpha+1)}\lra\cdots \lla
 \vt{\nu(\beta)} \lra \cdots\lra \vt{q}
\]
In this case, for $v\in \Rep(Q,\underline{n})$, we define a matrix
$Y_{(p,q)}(v)$ by
\[
 Y_{(p,q)}(v) =
 \begin{pmatrix}
  X_{\nu(\alpha), p}& X_{\nu(\alpha), \nu(\alpha+1)} &
  O & \cdots & O & O \\
  O & X_{\nu(\alpha+2), \nu(\alpha+1)} &
  X_{\nu(\alpha+2), \nu(\alpha+3)} & \ddots & \vdots & \vdots \\
  \vdots & \vdots & \vdots & \ddots & \vdots & \vdots \\
  O & O & O & \cdots & X_{\nu(\beta-1), \nu(\beta-2)} &
  X_{\nu(\beta-1), \nu(\beta)} \\
  O & O & O & \cdots & O & X_{q, \nu(\beta)}
 \end{pmatrix},
\]
and put $f_{(p,q)}(v) =\det Y_{(p,q)}(v)$. Then it is easy to see that
$f_{(p,q)}(v)$ is a relative invariant of
$(GL(\underline{n}), \Rep(Q,\underline{n}))$.

Next we consider the case where
both $p$ and $q$ are sources.
\[
 \vt{p}\lra \cdots\lra \vt{\nu(\alpha)} \lla \vt{\nu(\alpha)+1}\lla
 \cdots \lla \vt{\nu(\alpha+1)}\lra\cdots \lra
 \vt{\nu(\beta)} \lla \cdots\lla \vt{q}
\]
Then we define a matrix $Y_{(p,q)}(v)$ by
\[
 Y_{(p,q)}(v) =
 \begin{pmatrix}
  X_{\nu(\alpha), p}& X_{\nu(\alpha), \nu(\alpha+1)} &
  O & \cdots & O & O \\
  O & X_{\nu(\alpha+2), \nu(\alpha+1)} &
  X_{\nu(\alpha+2), \nu(\alpha+3)} & \ddots & \vdots & \vdots \\
  \vdots & \vdots & \vdots & \ddots & \vdots & \vdots \\
  O & O & O & \cdots & X_{\nu(\beta-2), \nu(\beta-1)} &
  O \\
  O & O & O & \cdots & X_{\nu(\beta), \nu(\beta-1)}
  & X_{\nu(\beta),q}
 \end{pmatrix}
\]
and put $f_{(p,q)}(v) =\det Y_{(p,q)}(v)$. Then it is easy to see that
$f_{(p,q)}(v)$ is a relative invariant of
$(GL(\underline{n}), \Rep(Q,\underline{n}))$.
One can easily find similar expressions of $Y_{(p,q)}(v)$ for 
the other cases, i.e., where ``$p$ is a sink and $q$ is a source'' or
``both $p$ and $q$ are sinks''.

\begin{example}
 \label{exmp:FirstExampleII}
 In Example~\ref{exmp:FirstExample}, assume that 
 $n_{1}<n_{2}< n_{3}=n_{4}, \,
 n_{5}=n_{1}$. Then we have
 $I_{\underline{n}}(Q)=\{(1,5), (3,4)\}$.
 The fundamental relative invariants are given explicitly by
 \[
  f_{(3,4)}(v) = \det X_{4, 3}, \qquad
 f_{(1, 5)}(v) = \det X_{5,1} = \det (X_{5, 4} X_{4, 3}
 X_{3, 2} X_{2, 1}).
 \]
\end{example}

\begin{example}
 \label{exmp:SecondExampleII}
 In Example~\ref{exmp:SecondExample}, assume that
 $n_{1}+n_{3}=n_{2}+n_{4}, \, n_{1}< n_{2}< n_{3}, \, n_{5}=n_{1}$.
 Then we have $I_{\underline{n}}(Q) = \{
 (1,4), (2,5)\}$. The fundamental relative invariants are given
 explicitly by
 \[
  f_{(1,4)}(v) = \det
 \begin{pmatrix}
  X_{2, 1}& X_{2, 3} \\
  O & X_{4, 3}
 \end{pmatrix}, \qquad
 f_{(2, 5)}(v) =\det
 \begin{pmatrix}
  X_{2, 3}& O \\
  X_{4, 3} & X_{4, 5}
 \end{pmatrix}.
 \]
\end{example}

\begin{example}
 \label{exmp:FourthExample}
 Let $Q$ be the following quiver of type $A_{8}$:
 \[
  Q:\quad \vt{1}\lra \vt{2}\lra \vt{3}\lla \vt{4}\lra
 \vt{5}\lra \vt{6}\lla \vt{7}\lla \vt{8}
 \]
 In this case, $\nu(0) = 1, \, \nu(1) = 3, \,
 \nu(2) = 4, \, \nu(3) = 6, \, \nu(4)= 8$. 
 If the dimension vector $\underline{n}$ satisfies 
 \begin{eqnarray*}
 & & n_{t}> n_{1} \; (t=2,3), \qquad
 n_{t}> n_{3}-n_{1}\; (t=4), \qquad
 n_{t}> n_{4}-n_{3}+n_{1}\; (t=5,6),
 \\
 & &  
 n_{t}> n_{6}-n_{4}+n_{3}-n_{1}\; (t=7), \qquad
 n_{8} = n_{6}-n_{4}+n_{3}-n_{1}, 
 \end{eqnarray*}
 then $(1,8)\in I_{\underline{n}}(Q)$ and the corresponding relative
 invariant is given by
 \[
  f_{(1,8)}(v) = \det
 \begin{pmatrix}
  X_{3,1}& X_{3,4} & O \\
  O & X_{6,4} & X_{6,8}
 \end{pmatrix}=
 \det
 \begin{pmatrix}
  X_{3,2}X_{2,1}& X_{3,4} & O \\
  O & X_{6,5}X_{5,4} & X_{6,7}X_{7,8}
 \end{pmatrix}.
 \]
\end{example}

\section{$b$-Functions of one variable}
\label{section:BfunOneVar}

We identify the dual space $\Rep(Q,\underline{n})^{*}$ of
$\Rep(Q,\underline{n})$ with $\Rep(Q, \underline{n})$ itself
via a non-degenerate bilinear form
\[
 \left\langle v, w\right\rangle =
 \sum_{a=1}^{r-1} \tr \left( {}^{t} Y_{h(a), t(a)}
 X_{h(a), t(a)}
 \right)
\]
for $v = \left( X_{h(a), t(a)}
\right)_{a}$ and $w = \left( Y_{h(a), t(a)}
\right)_{a}\in\Rep(Q,\underline{n})$.
Then, by Lemma~\ref{prop:b-funOf1var}, 
there exists a polynomial
$b_{(p,q)}(s)\in \C[s]$ satisfying 
\[
 f_{(p,q)}\left(\grad_{v}\right)
 f_{(p,q)}(v)^{s+1} = b_{(p,q)}(s) \cdot f_{(p,q)}(v)^{s}.
\]
By using the decomposition formula for $b$-functions 
proved in F.~Sato and Sugiyama~\cite{SatoSugi}, we determine
$b_{(p,q)}(s)$.

\begin{notation}
 \label{notation:RIasDetSymbol}
 As we have seen, in the case of quivers of type $A$,
 our relative invariants $f_{(p,q)}(s)$ are uniquely determined
 if we specify
 the subquiver together with dimension vector
 $\underline{n}$.
 In the following, we use an informal notation
 \[
 f_{(p,q)}(v)= \det \left(\ovt{n_{p}}\longrightarrow \ovt{n_{p+1}}
 \longrightarrow \cdots \longleftarrow \ovt{n_{q}}
 \right)
 \]
 to denote the relative invariant $f_{(p,q)}(v)$.
 Moreover, we use the following informal notation to
 denote the $b$-function $b_{(p,q)}(s)$ of $f_{(p,q)}(s)$ :
 \[
  b_{(p,q)}(s) = b
 \left(\ovt{n_{p}}\longrightarrow \ovt{n_{p+1}}
 \longrightarrow \cdots \longleftarrow \ovt{n_{q}}
 \right).
 \]
\end{notation}
 
\begin{example}
 \label{exmp:EquiorientedCase}
 Let us consider an equioriented quiver of type $A_{r}$
 as below:
 \[
  Q: \vt{1}\longrightarrow \vt{2}\longrightarrow \vt{3}
 \longrightarrow \cdots \longrightarrow \vt{r}
 \]
 Then, for $\underline{n}=(n_{1},\dots, n_{r})\in
 \Z_{>0}^{r}$, we have
 \begin{align*}
  GL(\underline{n}) &= GL(n_{1})\times GL(n_{2})\times GL(n_{3})
  \times \cdots \times GL(n_{r}), \\
 \Rep(Q, \underline{n}) &= M(n_{2},n_{1})\oplus M(n_{3},n_{2})
  \oplus \cdots \oplus M(n_{r}, n_{r-1}),
 \end{align*}
 and the action is given by
 \[
  g\cdot v = \left(g_{2}X_{2,1}g_{1}^{-1}, \,
 g_{3}X_{3,2} g_{2}^{-1},\, \dots,
 g_{r}X_{r,r-1}g_{r-1}^{-1}\right)
 \]
 for $g= (g_{1}, g_{2}, g_{3},\dots, g_{r})\in GL(\underline{n})$
 and
 $v= (X_{2, 1}, X_{3, 2}, \dots, X_{r,r-1})\in \Rep(Q, \underline{n})$.
 Now we assume that 
 $n_{1}=n_{r}< n_{2}, n_{3},\dots, n_{r-1}$. Then
 \[
 f(v) = \det (X_{r,1})= \det \left(X_{r,r-1}\cdots X_{3,2}X_{2,1}\right)  
 \]
 is an irreducible relative invariant corresponding to the character 
 $\chi(g) = \det g_{r} \det g_{1}^{-1}$.
 Note that this polynomial can be expressed as
 \[
 f(v)= \det\left(\ovt{n_{1}}\longrightarrow \ovt{n_{2}}\longrightarrow
 \ovt{n_{3}}\longrightarrow
 \cdots \longrightarrow \ovt{n_{r}}
  \right)
 \]
 if we employ Notation~\ref{notation:RIasDetSymbol}.  
 We shall calculate the $b$-function $b_{f}(s)$ of $f$ by using the
 result of \cite{SatoSugi}. We put
  \begin{eqnarray*}
  & & G'=  GL(n_{3})\times \cdots \times GL(n_{r}), \\
  & & E = M(n_{3}, n_{2})\oplus \cdots M(n_{r},n_{r-1}), \\
  & & F = M(n_{2}, n_{1}),  \\
  & & GL(m) = GL(n_{2}), \qquad GL(n)= GL(n_{1})
 \end{eqnarray*}
 and regard $(GL(\underline{n}), \Rep(Q,\underline{n}))$ as
 a prehomogeneous vector space of the form \cite[(2.2)]{SatoSugi}. 
 Then we have $l=0, d=1$ in the notation of
 \cite[Section 2]{SatoSugi} and thus we can apply
 \cite[Theorem~2.6]{SatoSugi} in order to obtain
 the decomposition
 \[
  b_{f}(s) = b_{1}(s) b_{2}(s).
 \]
 Moreover, by \cite[Theorem~3.3]{SatoSugi}, we have
 \[
  b_{2}(s) = \prod_{\lambda=1}^{n_{1}}(s+ n_{2}-n_{1}+\lambda).
 \]
 Note that $m=n_{2}, n=n_{1}, d=1$ in the notation of
 \cite[Theorem~3.3]{SatoSugi}.
 The last step is to calculate $b_{1}(s)$.
 Let $X_{2,1}^{0}= {}^{t}(E_{n_{1}}|0_{n_{1}, n_{2}-n_{1}})\in F$ and
 put this into $f(v)$. Then we have
 \[
  f(X_{2,1}^{0}, \, X_{3,2},\, \dots,
 X_{r,r-1}) = \det \left(X_{r,r-1}\cdots X_{3, 2}'\right),
 \]
 where $X_{3, 2}'$ is a part of the following block decomposition:
 \[
  X_{3,2} = \left(X_{3,2}'\, \big| \, X_{3, 2}''
 \right)\in M(n_{3}, n_{2}), \qquad
 X_{3, 2}'\in M(n_{3}, n_{1}), \qquad
 X_{3,2}''\in M(n_{3}, n_{2}-n_{1}).
 \]
 Note that
 $f(X_{2,1}^{0}, \, X_{3,2},\, \dots,
 X_{r,r-1})$ 
  can be regarded as  the relative invariant
 \[
  \det \left(\ovt{n_{1}}\longrightarrow \ovt{n_{3}}
 \longrightarrow \ovt{n_{4}}\longrightarrow \cdots
 \longrightarrow \ovt{n_{r}}\right)
 \]
 of a prehomogeneous vector space arising from an
 equioriented quiver of type $A_{r-1}$.
 By using Notation~\ref{notation:RIasDetSymbol}, we can
 summarize the above argument into 
 a reduction formula
 \begin{align}
  \nonumber
  b_{f}(s) &=b\left(\ovt{n_{1}}\longrightarrow \ovt{n_{2}}
 \longrightarrow \ovt{n_{3}}\longrightarrow \cdots
 \longrightarrow \ovt{n_{r}}\right) \\
  \label{form:CutOffInEquiorientedCase}
 &=
 b\left(\ovt{n_{1}}\longrightarrow \ovt{n_{3}}
 \longrightarrow \ovt{n_{4}}\longrightarrow \cdots
 \longrightarrow \ovt{n_{r}}\right)
 \times
 \prod_{\lambda=1}^{n_{1}}(s+ n_{2}-n_{1}+\lambda).
 \end{align} 
 By repeating this cut-off operation, we get to the
 $b$-function of $\det \left(\ovt{n_{1}}\longrightarrow
 \ovt{n_{r}}\right)$, which is nothing but the
 formula~\eqref{form:Cayley'sFormula}.
 We therefore obtain 
 \[
  b_{f}(s) = \prod_{t=2}^{r}\prod_{\lambda=1}^{n_{1}}
  (s+n_{t}-n_{1}+ \lambda).
 \]
\end{example}

The reduction formula~\eqref{form:CutOffInEquiorientedCase}
can be generalized as the following lemma.

\begin{lemma}
 \begin{align}
  \nonumber
   & b\left(\ovt{n_{p}}\longrightarrow \ovt{n_{p+1}}
     \longrightarrow \ovt{n_{p+2}}\cdots\cdots
     \ovt{n_{q}}
    \right)  \\
  \label{form:ReductionI}
  &=
  \prod_{\lambda=1}^{n_{p}} (s+n_{p+1} -n_{p}+\lambda)\times 
    b\left(\ovt{n_{p}}\longrightarrow \ovt{n_{p+2}}\cdots\cdots
     \ovt{n_{q}}
    \right) \\ \nonumber
  &=
  \prod_{\lambda=1}^{n_{p}} (s+n_{p+1} -n_{p}+\lambda)\times
  \prod_{\lambda=1}^{n_{p}} (s+n_{p+2} -n_{p}+\lambda)\times
  b\bigl(\cdots\cdots
  \bigr),
 \end{align}
 \begin{align}
  \nonumber
  &  b
    \left(\ovt{n_{p}}\longrightarrow \ovt{n_{p+1}}
     \longleftarrow \ovt{n_{p+2}}\cdots\cdots
     \ovt{n_{q}}
    \right) \\
  \label{form:ReductionII}
  &= 
  \prod_{\lambda=1}^{n_{p}} (s+n_{p+1} -n_{p}+\lambda)\times
   b\left(\ovt{n_{p+1} -n_{p}}\longleftarrow \ovt{n_{p+2}}\cdots\cdots
     \ovt{n_{q}}
    \right)  \\   \nonumber
  &=
    \prod_{\lambda=1}^{n_{p}} (s+n_{p+1} -n_{p}+\lambda)\times
    \prod_{\lambda=1}^{n_{p+1}-n_{p}} (s+n_{p+2} -n_{p+1} + n_{p}
  +\lambda)\times
    b\bigl(\cdots\cdots
  \bigr).
 \end{align} 
\end{lemma}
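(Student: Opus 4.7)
The plan is to mimic the argument of Example~\ref{exmp:EquiorientedCase}, applying the decomposition formula of F.~Sato and Sugiyama~\cite{SatoSugi} to peel off the first arrow of the subquiver that carries $f_{(p,q)}$. In both cases I decompose the representation space as $V = F \oplus E$, where $F = M(n_{p+1}, n_p)$ is the Hom-space of the first arrow and $E$ is the direct sum of the remaining Hom-spaces in the subquiver. The groups act so that $GL(n_p)$ sees only $F$, $GL(n_{p+1})$ sees $F$ together with the Hom-space adjacent to $p+1$ in $E$, and the other $GL(n_j)$'s see only $E$; this places the triplet in the form of \cite[(2.2)]{SatoSugi} with parameters $l=0$ and $d=1$, and \cite[Theorem~2.6]{SatoSugi} then produces the factorisation $b_{(p,q)}(s) = b_1(s)\, b_2(s)$.

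The factor $b_2(s)$ is the $b$-function of the rectangular determinantal piece attached to $F$, and \cite[Theorem~3.3]{SatoSugi} gives it directly as $\prod_{\lambda=1}^{n_p}(s + n_{p+1} - n_p + \lambda)$; this is the common leading product in (\ref{form:ReductionI}) and (\ref{form:ReductionII}). The factor $b_1(s)$ is computed by restriction: I set $X_{p+1, p} = X_{p+1,p}^{0} := {}^{t}(E_{n_p}\mid 0_{n_p,\, n_{p+1}-n_p})$ and interpret the result as the fundamental invariant of a smaller prehomogeneous vector space coming from a shorter quiver. In case (I) both arrows point rightward, and $f_{(p,q)}$ depends on $X_{p+2, p+1}$ only through the product $X_{p+2, p+1}\, X_{p+1, p}^{0}$, which picks out the first $n_p$ columns of $X_{p+2, p+1}$ as a generic element of $M(n_{p+2}, n_p)$; the restricted invariant is therefore the fundamental invariant for the quiver $\ovt{n_p} \to \ovt{n_{p+2}} \cdots \ovt{n_q}$, giving the first equality in (\ref{form:ReductionI}).

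In case (II) the vertex $p+1$ is a sink, so the defining matrix $Y_{(p,q)}$ of Section~\ref{section:RelativeInvariants} has a row strip whose first two column blocks are $X_{p+1, p}$ and $X_{p+1, p+2}$. Substituting $X_{p+1, p}^{0}$ places an identity block $E_{n_p}$ at the top-left of $Y_{(p,q)}$, so Laplace expansion along those rows and columns collapses $\det Y_{(p,q)}$ to the determinant of the matrix obtained by deleting them. The new leftmost column block is precisely the lower $(n_{p+1}-n_p)\times n_{p+2}$ submatrix of $X_{p+1, p+2}$, which is generic and plays the role of a leftward arrow into a new vertex of dimension $n_{p+1}-n_p$; the restricted invariant is thus the fundamental invariant for the quiver $\ovt{n_{p+1}-n_p} \leftarrow \ovt{n_{p+2}} \cdots \ovt{n_q}$, giving the first equality in (\ref{form:ReductionII}). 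The second equalities in each display then follow by one further application of the analogous reduction, dictated by the direction of the next arrow. The main technical point is to verify that the restricted triplet is again a reductive prehomogeneous vector space of the form required by \cite{SatoSugi}, and that the Laplace-expansion identification in case (II) matches the block matrix $Y$ of the smaller quiver on the nose.
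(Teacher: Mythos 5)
Your overall strategy is exactly the paper's: decompose off the first arrow as $V=F\oplus E$ with $F=M(n_{p+1},n_p)$, obtain $b_{(p,q)}(s)=b_1(s)b_2(s)$ from the decomposition formula of \cite{SatoSugi}, read off $b_2(s)=\prod_{\lambda=1}^{n_p}(s+n_{p+1}-n_p+\lambda)$ from \cite[Theorem~3.3]{SatoSugi}, and compute $b_1(s)$ by substituting $X_{p+1,p}^{0}={}^{t}(E_{n_p}\,|\,0_{n_p,n_{p+1}-n_p})$ and identifying the restricted polynomial with the fundamental invariant of a shorter quiver. Your block-matrix analysis of the sink case, where the leftmost block of the collapsed determinant is the lower $(n_{p+1}-n_p)\times n_{p+2}$ strip $X_{p+1,p+2}''$, reproduces the paper's computation of $Y_{(p,q)}'(v)$ on the nose, and the iteration yielding the second equality in each display is also the same.

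There is, however, one substantive error in your bookkeeping of the external input. You assert that in \emph{both} cases the triplet fits \cite[(2.2)]{SatoSugi} with parameters $l=0$, $d=1$, so that \cite[Theorem~2.6]{SatoSugi} applies. That is correct only for case (I), the equioriented cut-off, as in Example~\ref{exmp:EquiorientedCase}. In case (II), where the vertex $p+1$ is a sink receiving arrows from both $p$ and $p+2$ (so that $E$ contains $M(n_{p+1},n_{p+2})$, on which $GL(m)=GL(n_{p+1})$ acts), the correct parameters are $l=d=1$ and the factorization $b_{(p,q)}(s)=b_1(s)b_2(s)$ comes from \cite[Theorem~2.5]{SatoSugi}, not Theorem~2.6; the paper even alternates between the two theorems within a single chain of cut-offs (in \eqref{form:WhenDeltaP+2Is-1} the first equality uses Theorem~2.5 and the second Theorem~2.6, according to whether the next arrow points into or out of the new leftmost vertex). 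Since the two decomposition theorems have different hypotheses distinguished precisely by the parameter $l$ --- roughly, whether the invariant couples to $E$ through $GL(m)$ in the manner of the sink configuration --- your blanket appeal to Theorem~2.6 with $l=0$ leaves the key factorization step in case (II) unjustified. Checking that the sink configuration satisfies the hypotheses of the $l=d=1$ theorem is exactly the point where the two halves of the lemma differ, and it is the one step your proposal glosses over; the remainder is sound.
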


\begin{proof}
 We first prove \eqref{form:ReductionII} in the case of
 $\delta(p+2)=1$, in which case our quiver is of the form
 \[
  \ovt{n_{p}}\longrightarrow \ovt{n_{p+1}}
     \longleftarrow \ovt{n_{p+2}}\longrightarrow \ovt{n_{p+3}}
 \cdots\cdots
 \]
 and we have $\nu(\alpha) = n_{p+1}$ and
 $\nu(\alpha+1) = n_{p+2}$. The relative invariant $f_{(p,q)}(v)$
 is given by $f_{(p,q)}(v) = \det Y_{(p,q)}(v)$ with
 \[
  Y_{(p,q)}(v) =
 \begin{pmatrix}
  X_{p+1, p}& X_{p+1, p+2} & O \\
  O & X_{\nu(\alpha+2), p+2} & \cdots \\
  \vdots & \vdots & \ddots
 \end{pmatrix},
 \]
 and the corresponding character is $\chi(g) = \det g_{p}^{-1} \det
 g_{p+1} \det g_{p+2}^{-1} \cdots$.
 We put
 \begin{align*}
  G'&= GL(n_{p+2})\times GL(n_{p+3})\times \cdots \times GL(n_{q}), \\
  E &= M(n_{p+1}, n_{p+2})\oplus M(n_{p+3}, n_{p+2})\oplus
  \cdots, & F &=M(n_{p+1},n_{p}), \\
  GL(m) &= GL(n_{p+1}), & GL(n)&= GL(n_{p}),
 \end{align*}
 and regard $(GL(\underline{n}), \Rep(Q,\underline{n}))$
 as a triplet of the form \cite[(2.2)]{SatoSugi}.
 Then we have $l= d=1$ in the notation of \cite[Section 2]{SatoSugi}
 and thus we can apply \cite[Theorem~2.5]{SatoSugi} so that
 we have the decomposition
 \[
  b_{(p,q)}(s) = b_{1}(s) b_{2}(s).
 \]
 Moreover, by \cite[Theorem~3.3]{SatoSugi}, we have
 \[
  b_{2}(s) = \prod_{\lambda=1}^{n_{p}}(s+ n_{p+1}-n_{p}+\lambda).
 \]
 Note that $m=n_{p+1}, n=n_{p}, d=1$ in the notation of
 \cite[Theorem~3.3]{SatoSugi}. To calculate $b_{1}(s)$, we let
 $X_{p+1,p}^{0}= {}^{t} (E_{n_{p}}\, |\, 0_{n_{p}, n_{p+1}-n_{p}})
 \in F$ and put this into $f_{(p,q)}(v)$. Then we have
 \[
  f_{(p,q)}\left(X_{p+1,p}^{0}, \, X_{p+1,p+2}, X_{p+3,p+2}, \dots
 \right)=
 \det Y_{(p,q)}'(v), 
 \]
 where
 \[
  Y_{(p,q)}'(v) =
 \begin{pmatrix}
  X_{p+1,p+2}''& O & \cdots \\
  X_{\nu(\alpha+2), p+2} & \cdots & \cdots \\
  \vdots & \vdots & \ddots
 \end{pmatrix},
 \]
 and $X_{p+1,p+2}''$ is a part of the following block decomposition:
 \begin{align*}
  X_{p+1,p+2} &=
 \left(
 \begin{array}{c}
  X_{p+1,p+2}'\\ \hline
  X_{p+1,p+2}''
 \end{array}
 \right)\in M(n_{p+1}, n_{p+2}),  \\
  X_{p+1,p+2}' &\in M(n_{p},n_{p+2}), \qquad
  X_{p+1, p+2}'' \in M(n_{p+1}-n_{p}, n_{p+2}).
 \end{align*}
 The polynomial $f_{(p,q)}\left(X_{p+1,p}^{0}, \, X_{p+1,p+2},
 X_{p+3,p+2}, \dots 
 \right)$ can be regarded as the relative invariant
 \[
  \det
 \left(\ovt{n_{p+1}-n_{p}}\longleftarrow \ovt{n_{p+2}}\longrightarrow
 \ovt{n_{p+3}}\cdots \cdots \right)
 \]
 and hence we obtain a reduction formula
 \begin{align*}
  b_{(p,q)}(s)&= b\left(\ovt{n_{p}}\longrightarrow \ovt{n_{p+1}}
     \longleftarrow \ovt{n_{p+2}}\longrightarrow \ovt{n_{p+3}}
 \cdots\cdots
  \right) \\
  &= b\left(\ovt{n_{p+1}-n_{p}}
     \longleftarrow \ovt{n_{p+2}}\longrightarrow \ovt{n_{p+3}}
 \cdots\cdots
  \right) \times
  \prod_{\lambda=1}^{n_{p}}(s+ n_{p+1}-n_{p}+\lambda).
 \end{align*}
 By repeating this cut-off operation with \cite[Theorem~2.5]{SatoSugi},
 we have
 \begin{align*}
 b\left(\ovt{n_{p+1}-n_{p}}
     \longleftarrow \ovt{n_{p+2}}\longrightarrow \ovt{n_{p+3}}
 \cdots\cdots
  \right) 
  &=
 b\left(\ovt{n_{p+2}-n_{p+1}+n_{p}}\longrightarrow \ovt{n_{p+3}}
 \cdots\cdots
 \right)\\
  & \quad\times
  \prod_{\lambda=1}^{n_{p+1}-n_{p}}
  (s+n_{p+2}-n_{p+1}+n_{p}+\lambda).
 \end{align*}
 Combining the above two formulas, we obtain
 \begin{align}
  \nonumber
  b_{(p,q)}(s)&=b\left(\ovt{n_{p}}\longrightarrow \ovt{n_{p+1}}
     \longleftarrow \ovt{n_{p+2}}\longrightarrow \ovt{n_{p+3}}
 \cdots\cdots
  \right) \\
  \label{form:WhenDeltaP+2Is1}
  &=\prod_{\lambda=1}^{n_{p}}(s+ n_{p+1}-n_{p}+\lambda)\times
  \prod_{\lambda=1}^{n_{p+1}-n_{p}}
  (s+n_{p+2}-n_{p+1}+n_{p}+\lambda)\\
  \nonumber
  &\quad \times
  b\left(\ovt{n_{p+2}-n_{p+1}+n_{p}}\longrightarrow \ovt{n_{p+3}}
 \cdots\cdots
 \right).
 \end{align}
 On the other hand, when $\delta(p+2)=-1$, we carry out the
 cut-off operations as follows:
 \begin{align}
  \nonumber
  &b\left(\ovt{n_{p}}\longrightarrow \ovt{n_{p+1}}
     \longleftarrow \ovt{n_{p+2}}\longleftarrow \ovt{n_{p+3}}
 \cdots\cdots
  \right) \\  \nonumber
  &=\prod_{\lambda=1}^{n_{p}}(s+ n_{p+1}-n_{p}+\lambda)\times
  b\left(\ovt{n_{p+1}-n_{p}}
     \longleftarrow \ovt{n_{p+2}}\longleftarrow \ovt{n_{p+3}}
 \cdots\cdots
  \right) \\
  \label{form:WhenDeltaP+2Is-1}  
  &=
  \prod_{\lambda=1}^{n_{p}}(s+ n_{p+1}-n_{p}+\lambda)\times  
  \prod_{\lambda=1}^{n_{p+1}-n_{p}}
  (s+n_{p+2}-n_{p+1}+n_{p}+\lambda)\\ \nonumber
  &\quad \times
  b\left(\ovt{n_{p+1}-n_{p}}\longleftarrow \ovt{n_{p+3}}
 \cdots\cdots
 \right).
 \end{align}
 In the first equality, we use \cite[Theorem~2.5]{SatoSugi} and
 in the second equality, we use \cite[Theorem~2.6]{SatoSugi}.
 The two formulas \eqref{form:WhenDeltaP+2Is1} and
 \eqref{form:WhenDeltaP+2Is-1} can be summarized into
 one formula \eqref{form:ReductionII} in an abbreviated form. 
 We omit the proof of \eqref{form:ReductionI}, since it is quite the
 same as above. 
\end{proof}

Now we give a general formula for the $b$-function
$b_{(p,q)}(s)$. For
$\kappa=0,1,\dots, \beta-\alpha$, we put 
\begin{align*}
 \overline{n}_{\nu(\alpha+\kappa)} &=
 \sum_{\tau=0}^{\kappa} (-1)^{\tau} n_{\nu(\alpha+\kappa-\tau)}
 + (-1)^{\kappa+1} n_{p} \\
 &= n_{\nu(\alpha+\kappa)} -n_{\nu(\alpha+\kappa-1)} +
 \cdots + (-1)^{\kappa}n_{\nu(\alpha)} + (-1)^{\kappa+1}
 n_{p}.
\end{align*}
Then, as
an immediate consequence of
 \eqref{form:ReductionI} and
 \eqref{form:ReductionII}, we obtain the following theorem.
\begin{theorem}
 \label{thm:Bfun}
 \begin{align*}
  b_{(p,q)}(s) &=
  \prod_{t=p+1}^{\nu(\alpha)}\prod_{\lambda=1}^{n_{p}}
  (s+ n_{t} -n_{p}+ \lambda) \\
  & \quad \; \times \prod_{\kappa=0}^{\beta-\alpha-1}
  \prod_{t=\nu(\alpha+\kappa) + 1}^{\nu(\alpha+\kappa+1)}
  \prod_{\lambda=1}^{\overline{n}_{\nu(\alpha+\kappa)}}
  (s+ n_{t} -\overline{n}_{\nu(\alpha+\kappa)} + \lambda) \\
  &\quad \; \times \prod_{t=\nu(\beta)+1}^{q}
  \prod_{\lambda=1}^{\overline{n}_{\nu(\beta)}}
  (s+n_{t} -\overline{n}_{\nu(\beta)} + \lambda).
 \end{align*}
\end{theorem}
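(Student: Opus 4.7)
The plan is to obtain Theorem~\ref{thm:Bfun} by iterating the reduction formulas \eqref{form:ReductionI} and \eqref{form:ReductionII} along the quiver from the vertex $p$ to the vertex $q$, sweeping through the sinks and sources $\nu(\alpha), \nu(\alpha+1), \dots, \nu(\beta)$ one at a time. The key observation is that between two consecutive sinks/sources $\nu(\alpha+\kappa)$ and $\nu(\alpha+\kappa+1)$ the orientation is constant, so repeated application of \eqref{form:ReductionI} (or its mirror image, obtained by reversing all arrows and invoking $Q \leftrightarrow Q^{*}$ duality) peels off the interior vertices, while one application of \eqref{form:ReductionII} at the vertex $\nu(\alpha+\kappa+1)$ crosses that sink/source and replaces the current leading dimension by the ``mirrored'' value prescribed by $\overline{n}$.

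Concretely, I would proceed as follows. First, in the initial equioriented segment running from $p$ to $\nu(\alpha)$, successive applications of \eqref{form:ReductionI} strip away the vertices $p+1, p+2, \dots, \nu(\alpha)-1$ and produce the factors $\prod_{\lambda=1}^{n_{p}}(s+n_{t}-n_{p}+\lambda)$ for $t = p+1,\dots, \nu(\alpha)-1$; then one application of \eqref{form:ReductionII} at the triple $\ovt{n_{p}} \to \ovt{n_{\nu(\alpha)}} \leftarrow \ovt{n_{\nu(\alpha)+1}}$ contributes the remaining factor for $t = \nu(\alpha)$ and transfers the computation to $b(\ovt{\overline{n}_{\nu(\alpha)}} \leftarrow \ovt{n_{\nu(\alpha)+1}} \cdots)$ because $\overline{n}_{\nu(\alpha)} = n_{\nu(\alpha)} - n_{p}$. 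This accounts for the first line of the formula. Iterating the same procedure across each subsequent segment $[\nu(\alpha+\kappa)+1, \nu(\alpha+\kappa+1)]$ (with current leading dimension $\overline{n}_{\nu(\alpha+\kappa)}$) and using the recursion $\overline{n}_{\nu(\alpha+\kappa+1)} = n_{\nu(\alpha+\kappa+1)} - \overline{n}_{\nu(\alpha+\kappa)}$ yields the middle double product for $\kappa = 0, 1, \dots, \beta-\alpha-1$.

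Finally, after the last sink/source at $\nu(\beta)$ is crossed, we are reduced to a $b$-function of the form $b(\ovt{\overline{n}_{\nu(\beta)}} \cdots \ovt{n_{q}})$ along an equioriented chain. Another sweep of \eqref{form:ReductionI} (or its mirror) strips off $t = \nu(\beta)+1, \dots, q-1$, and the remaining base case $b(\ovt{\overline{n}_{\nu(\beta)}} \to \ovt{n_{q}})$ is Cayley's formula \eqref{form:Cayley'sFormula}; condition (I4) ensures $n_{q} = \overline{n}_{\nu(\beta)}$, so this base case fits seamlessly into the third product at $t = q$. The main bookkeeping obstacle is verifying that each iteration does indeed land in the shape required to apply \eqref{form:ReductionI}--\eqref{form:ReductionII} (in particular, obtaining the ``leftward'' variants of \eqref{form:ReductionI} by the $Q \leftrightarrow Q^{*}$ duality, which is harmless since a prehomogeneous vector space and its dual share the same $b$-function), and confirming that the definition of $\overline{n}_{\nu(\alpha+\kappa)}$ propagates correctly under the alternating sign pattern produced by successive uses of \eqref{form:ReductionII}.
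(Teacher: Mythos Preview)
Your proposal is correct and follows essentially the same approach as the paper: the paper states that Theorem~\ref{thm:Bfun} is ``an immediate consequence of \eqref{form:ReductionI} and \eqref{form:ReductionII}'', and your write-up simply spells out that iteration in detail. Your device of obtaining the leftward-starting variants of \eqref{form:ReductionI}--\eqref{form:ReductionII} via $Q\leftrightarrow Q^{*}$ duality is fine; the paper instead derives them directly in the proof of the lemma by invoking \cite[Theorems~2.5 and 2.6]{SatoSugi}, but the two routes are equivalent.
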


\begin{remark}
 Recently, Wachi~\cite{Wachi} has studied the above theorem
 from the view point of Capelli identities. In particular,
 he gave another proof of the above theorem for the
 equioriented case by using his generalized Capelli identity
 (\cite[Theorem~5.1]{Wachi}). 
\end{remark}

\begin{example}
 \label{exmp:FirstExampleIII}
 For the relative invariants in
  Example~\ref{exmp:FirstExampleII}, we have
 \begin{align*}
  b_{(3,4)}(s)&=(s+1)(s+2)\cdots (s+n_{3}), \\
  b_{(1,5)}(s)&=
  \prod_{t=2}^{5}\prod_{\lambda=1}^{n_{1}}
  (s+n_{t}-n_{1}+\lambda) \\
  &= (s+1)\cdots (s+n_{1})\times (s+n_{2}-n_{1}+1)\cdots (s+n_{2}) \\
  &\quad \times (s+n_{3}-n_{1}+1)^2\cdots (s+n_{3})^2.
 \end{align*}
\end{example}

\begin{example}
 \label{exmp:SecondExampleIII}
 For the relative invariants in Example~\ref{exmp:SecondExampleII}, we
 have 
 \begin{align*}  
  b_{(1,4)}(s)
  &= (s+1)\cdots (s+n_{3})\times (s+n_{2}-n_{1}+1)\cdots (s+n_{2}), \\
  b_{(2,5)}(s)&= (s+1)\cdots (s+ n_{4})\times
 (s+n_{3}-n_{2}+1)\cdots (s+n_{3}).
 \end{align*}
\end{example}

\begin{example}
\label{exmp:FourthExampleII}
 The $b$-function $b_{(1,8)}(s)$ of the relative invariant
 $f_{(1,8)}(v)$ in Example~\ref{exmp:FourthExample} is given by
 \begin{align*}
  b_{(1,8)}(s)&=
  \prod_{t=2}^{3} (s+n_{t}-n_{1}+1)\cdots (s+n_{t}) \\
  &\quad\times
  \prod_{t=4}^{4}(s+n_{t}-n_{3}+n_{1}+1)\cdots (s+n_{t})\\
  &\quad \times
  \prod_{t=5}^{6}(s+n_{t}-n_{4}+n_{3}-n_{1}+1)\cdots (s+n_{t}) \\[4pt]
  & \quad\times (s+1)\cdots (s+n_{7}).
 \end{align*}
\end{example} 

\section{Rank parameters and lace diagrams}
\label{section:RepThOfQuiver}

To describe a combinatorial method to compute the $b$-functions of
several variables, we need to introduce the notion of
rank parameters and lace diagrams. 

Assume that $Q$ is a quiver of type $A_{r}$ with arbitrary orientation. 
For any pair $(i, j)$ of integers with $i< j$, we denote by
$Q^{(i,j)}$ the subquiver of $Q$ starting at $i$ and terminating
at $j$. Here $Q^{(i,j)}$ includes the vertices $i$ and $j$, and thus
$i$ is either a source or a sink of $Q^{(i, j)}$, and so is $j$.  
Let $A=\left(A_{h(a), t(a)}
\right)_{1\leq a\leq r-1}\in \Rep(Q,\underline{n})$ be a given
representation of $Q$
with dimension vector $\underline{n}$ ; recall that $A_{h(a), t(a)}$
is a linear map from $L_{t(a)}$ to $L_{h(a)}$. 
For $1\leq i< j\leq r$,
we define a map
\[
 Y_{(i,j)}(A):\bigoplus_{\tau} L_{\tau}
 \longrightarrow 
 \bigoplus_{\sigma} L_{\sigma} \qquad\;
 \left(\begin{array}{l}
   \text{$\tau$ runs over all the sources of $Q^{(i,j)}$}\\
   \text{$\sigma$ runs over all the sinks of $Q^{(i,j)}$}
 \end{array}\right)
\]
to be the collection of linear maps $\varphi_{\kappa}^{A}$ defined
by
\begin{equation}
 \label{form:DifferenceMap}
  \varphi_{\kappa}^{A}:
 L_{\mu(\kappa-1)}\oplus L_{\mu(\kappa+1)} \to
 L_{\mu(\kappa)}\quad  ; \quad
 (z, w) \mapsto
 (A_{\mu(\kappa), \mu(\kappa-1)} z +
 A_{\mu(\kappa), \mu(\kappa+1)} w),
\end{equation}
where $\mu(\kappa-1), \mu(\kappa+1)$ are sources of $Q^{(i,j)}$
and $\mu(\kappa)$ is a sink of $Q^{(i,j)}$.

\begin{example}
 Let us consider the following quiver of type $A_{5}$:
 \[
  \vt{1}\lra \vt{2}\lla \vt{3} \lra \vt{4}\lra \vt{5} 
 \]
 Then we have
 \[
 \begin{array}{lcl}
   Y_{(1,4)}(A):L_{1}\oplus L_{3} \to
   L_{2}\oplus L_{4}  & \quad ;\quad &
   (z_{1},z_{3}) \longmapsto
   \left( A_{2,1}z_{1} +A_{2, 3}z_{3},\, A_{4, 3}z_{3} 
   \right) \\[5pt]
  Y_{(2,5)}(A): L_{3} \to
   L_{2}\oplus L_{5} & \quad ;\quad & 
   z_{3} \longmapsto
   \left(A_{2, 3}z_{3},\, A_{5, 4}A_{4, 3}z_{3} 
   \right) \\[5pt]
  Y_{(1,5)}(A): L_{1}\oplus L_{3} \to
   L_{2}\oplus L_{5} & \quad ;\quad&
   (z_{1}, z_{3}) \longmapsto
   \left(A_{2,1}z_{1}+A_{2, 3}z_{3},\, A_{5, 4}A_{4, 3}z_{3} 
   \right).   
 \end{array}
 \]
\end{example}
For $A\in \Rep(Q,\underline{n})$, we define $N_{ij}^{A}$ by
\[
 N_{ij}^{A}:=
 \begin{cases}
  \rank Y_{(i,j)}(A) & (i< j) \\
  \dim L_{i} = n_{i} & (i=j)
 \end{cases}.
\]
We call $N_{A}:=\{N_{ij}^{A}\}_{1\leq i\leq j\leq r}$
the {\it rank parameter} of $A\in \Rep(Q,\underline{n})$.
As the following lemma shows,  the rank parameter 
is an invariant which characterizes the $GL(\underline{n})$-orbit.

\begin{lemma}
 For $A\in \Rep(Q,\underline{n})$, we denote by
 $\mathcal{O}_{A}$ the $GL(\underline{n})$-orbit through $A$.
 Then for $A, B\in \Rep(Q, \underline{n})$, we have
 $\mathcal{O}_{A}\subset \overline{\mathcal{O}}_{B}$ if and only	
 if $N_{ij}^{A}\leq N_{ij}^{B}$ for $1\leq i\leq j\leq r$.
 That is, the partial ordering on the rank parameters coincides
 with the closure ordering on $GL(\underline{n})$-orbits.
 In particular, $\mathcal{O}_{A}=\mathcal{O}_{B}$ if and only if
 $N_{ij}^{A}=N_{ij}^{B}$ for $1\leq i \leq j\leq r$.
\end{lemma}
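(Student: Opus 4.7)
The plan is to prove the $GL(\underline{n})$-invariance of the rank parameter, handle the direction $\mathcal{O}_A\subset\overline{\mathcal{O}_B}\Rightarrow N_{ij}^A\le N_{ij}^B$ by semi-continuity of rank, and then attack the converse by relating the rank parameters to the multiplicities in the indecomposable decomposition and invoking a degeneration result. The ``in particular'' clause will then follow by specializing the equivalence to equality.

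First I would verify that $N_{ij}^A$ is constant on $GL(\underline{n})$-orbits. If $B=g\cdot A$ with $g=(g_i)_i$, then by the action formula $Y_{(i,j)}(B)$ is obtained from $Y_{(i,j)}(A)$ by pre-composition with $\bigoplus_\tau g_\tau^{-1}$ at the sources of $Q^{(i,j)}$ and post-composition with $\bigoplus_\sigma g_\sigma$ at the sinks, both of which are invertible, so the rank is unchanged. For the easy direction of the equivalence, observe that the entries of $Y_{(i,j)}(v)$ depend polynomially on $v\in\Rep(Q,\underline{n})$, and for each integer $r$ the determinantal locus $\{v:\rank Y_{(i,j)}(v)\le r\}$ is Zariski-closed. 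Since $\rank Y_{(i,j)}$ is constantly $N_{ij}^B$ on $\mathcal{O}_B$ by the first step, lower semi-continuity of rank forces $N_{ij}^A=\rank Y_{(i,j)}(A)\le N_{ij}^B$ whenever $A\in\overline{\mathcal{O}_B}$.

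The substantial part is the converse. My strategy is to express $N_{ij}^A$ in terms of the multiplicities $m_{kl}^A$ of the indecomposable summands in the decomposition~\eqref{form:IndecomposableDecomposition}. Restricted to the summand $m_{kl}^A I_{kl}$, the map $Y_{(i,j)}(A)$ has a rank that depends only on how the interval $[k,l]$ meets the source/sink pattern of $Q^{(i,j)}$; summing over $(k,l)$ expresses $N_{ij}^A$ as an explicit non-negative integer linear combination of the $m_{kl}^A$. For the equioriented case this reduces to $N_{ij}^A=\sum_{k\le i,\,l\ge j}m_{kl}^A$, and for a general orientation the linear system remains triangular with respect to the natural partial order on intervals. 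A Möbius-type inversion then recovers $\{m_{kl}^A\}$ from $\{N_{ij}^A\}$; in particular $N_A=N_B$ already forces $A\cong B$ and hence $\mathcal{O}_A=\mathcal{O}_B$, which disposes of the ``in particular'' statement.

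To promote the numerical inequality to a closure relation, I would invoke the Abeasis--Del Fra theorem on orbit closures for quivers of type $A$, which asserts precisely that the closure order on $\Rep(Q,\underline{n})$ agrees with the partial order on rank parameters. The hardest part will be justifying this final step: to avoid citing the theorem as a black box, one must realize every covering relation in the rank poset by an explicit one-parameter degeneration, typically by replacing a pair of summands $I_{kl}\oplus I_{k'l'}$ with suitably nested or overlapping intervals by $I_{\min(k,k'),\max(l,l')}\oplus I_{\max(k,k'),\min(l,l')}$ via a one-parameter family of representations; iterating such elementary moves then connects $\mathcal{O}_B$ to $\mathcal{O}_A$ inside $\overline{\mathcal{O}_B}$.
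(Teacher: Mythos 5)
Your proposal is correct and at the decisive step coincides with the paper's proof, which consists entirely of the citation of Abeasis--Del Fra \cite[Theorem~5.2]{AbeasisDelFra}: you invoke the same theorem for the hard implication, and the supplementary arguments you supply (orbit-invariance of $N_{ij}$, the semicontinuity direction via closed determinantal loci, and recovering multiplicities from rank parameters for the ``in particular'' clause) are sound. One non-load-bearing slip in your optional sketch: the move you describe, replacing $I_{kl}\oplus I_{k'l'}$ by $I_{\min(k,k'),\max(l,l')}\oplus I_{\max(k,k'),\min(l,l')}$, \emph{increases} the rank parameters and is therefore a generization rather than a degeneration, so the elementary moves realizing covering relations inside $\overline{\mathcal{O}}_{B}$ go in the opposite direction; since you ultimately rely on the cited theorem, this does not affect the correctness of your argument.
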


\begin{proof}
 See Abeasis and Del Fra~\cite[Theorem~5.2]{AbeasisDelFra}.
\end{proof}

Let us return to $b$-functions. Let $(p,q)\in I_{\underline{n}}(Q)$.
Note that $f_{(p,q)}(A) = \det Y_{(p,q)}(A)$ and thus
$A\in \Rep(Q,\underline{n})$
satisfies $f_{(p,q)}(A)\neq 0$ if and only if $Y_{(p,q)}(A)$ is
an isomorphism. We denote by $N^{(p,q)}$ the rank parameter which
is minimal (with respect to the above-mentioned partial ordering)
among the rank parameters $N_{A}$ such that
$Y_{(p,q)}(A)$ is an isomorphism.
The orbit $\mathcal{O}^{(p,q)}$ corresponding to $N^{(p, q)}$
is the closed $GL(\underline{n})$-orbit
in $\{A\in \Rep(Q,\underline{n})\, ; \; f_{(p,q)}(A)\neq 0\}$, and
thus it is unique (cf.\  Gyoja~\cite[Lemma~1.4]{Nonreg}).
We note that Shmelkin~\cite{Shmelkin} called
$A^{(p,q)}\in \mathcal{O}^{(p,q)}$ a {\it locally semi-simple
representation} of $Q$.

For $N^{(p,q)}=\{N_{ij}^{(p,q)}\}_{1\leq i\leq j\leq r}$, we put
\begin{align}
 \nonumber
 \mathcal{F}^{(p,q)} &:= \left\{
 \left\{N_{22}^{(p,q)}-N_{12}^{(p,q)}+1,\dots,
 N_{22}^{(p,q)}\,(=n_{2})\right\}\, ;\, \right.\\
 \label{form:DefOfFpq}
 &\qquad\quad
 \left\{N_{33}^{(p,q)}-N_{23}^{(p,q)}+1,\dots,
 N_{33}^{(p,q)}\,(=n_{3})\right\}\, ; \, \dots 
 \\ \nonumber
 &\qquad\quad
 \left. \dots \,;\,\left\{N_{rr}^{(p,q)}-N_{r-1,r}^{(p,q)}+1,\dots,
 N_{rr}^{(p,q)}\,(=n_{r})\right\}\right\}.
\end{align}
Note that $\mathcal{F}^{(p,q)}$ is a set consisting of $r-1$ sets, and
each set consists of consecutive natural numbers. 
However, if $N_{k-1,k}^{(p,q)}=0$, then we regard
$\{N_{kk}^{(p,q)}-N_{k-1,k}^{(p,q)}+1,\dots,
N_{kk}^{(p,q)}\,(=n_{k})\}$ as the empty set $\emptyset$.
Moreover, we define a  ``set'' of linear forms 
$s+ \mathcal{F}^{(p,q)}$ by
\begin{align*}
  s+ \mathcal{F}^{(p,q)} &:= \left\{
 \left\{s+ N_{22}^{(p,q)}-N_{12}^{(p,q)}+1,\dots,
 s+ N_{22}^{(p,q)}\,(=s+ n_{2})\right\}\, ;\, \right.\\
 &\qquad\quad
 \left\{s+N_{33}^{(p,q)}-N_{23}^{(p,q)}+1,\dots,
 s+N_{33}^{(p,q)}\,(=s+ n_{3})\right\}\, ; \, \dots 
 \\
 &\qquad\quad
 \left. \dots \,;\,\left\{s+ N_{rr}^{(p,q)}-N_{r-1,r}^{(p,q)}+1,\dots,
 s+N_{rr}^{(p,q)}\,(=s+n_{r})\right\}\right\}.
\end{align*}
Then we have the following lemma.
\begin{lemma}
 \label{lem:BfunAndRankArrays}
 Let $(p,q)\in I_{\underline{n}}(Q)$. If we multiply all the
 linear forms contained in $s+\mathcal{F}^{(p,q)}$ together,
 then we obtain the $b$-function $b_{(p,q)}(s)$.
\end{lemma}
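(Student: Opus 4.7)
The plan is to compute the minimal rank parameter $N^{(p,q)}$ explicitly and then to verify that the resulting product of linear forms in $s+\mathcal{F}^{(p,q)}$ reproduces the formula of Theorem~\ref{thm:Bfun}. Since both quantities are completely determined by the dimension vector and the pair $(p,q)$, the lemma reduces to a combinatorial identity once the ranks $N_{k-1,k}^{(p,q)}$ are known.

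The central computational step is to determine $N_{k-1,k}^{(p,q)}$ for each adjacent pair $k-1<k$. Note that $Y_{(k-1,k)}(A)$ is, up to sign conventions, the single linear map $A_{h(a),t(a)}$ attached to the arrow joining $k-1$ and $k$, so $N_{k-1,k}^{(p,q)}$ equals the rank of this map evaluated at a locally semi-simple representative $A^{(p,q)}\in\mathcal{O}^{(p,q)}$. I would establish the following rank table: $N_{k-1,k}^{(p,q)}=0$ for $k\notin[p+1,q]$; $N_{k-1,k}^{(p,q)}=n_{p}$ for $k\in[p+1,\nu(\alpha)]$; $N_{k-1,k}^{(p,q)}=\overline{n}_{\nu(\alpha+\kappa)}$ for $k\in[\nu(\alpha+\kappa)+1,\nu(\alpha+\kappa+1)]$ with $\kappa=0,\dots,\beta-\alpha-1$; and $N_{k-1,k}^{(p,q)}=\overline{n}_{\nu(\beta)}$ for $k\in[\nu(\beta)+1,q]$. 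The support of $A^{(p,q)}$ must lie in the arrows of the subquiver $Q^{(p,q)}$, because arrows outside do not enter $f_{(p,q)}$ and setting them to zero strictly lowers the rank parameter. Within $Q^{(p,q)}$, the condition $\det Y_{(p,q)}(A^{(p,q)})\neq 0$ forces each block entering $Y_{(p,q)}$ to have the maximal rank allowed by its dimensions, while minimality of the orbit (via the Abeasis–Del Fra closure order, our earlier lemma) forbids any larger rank. Concretely, a representative $A^{(p,q)}$ can be constructed by letting the component attached to each arrow be the coordinate inclusion or projection of the appropriate rank; iterating the difference maps \eqref{form:DifferenceMap} along each monotone segment, and using the defining conditions (I1)–(I4) of $I_{\underline{n}}(Q)$, produces exactly the $\overline{n}$-values listed above by induction on $\kappa$.

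With the rank table in hand, substitution into \eqref{form:DefOfFpq} together with $N_{kk}^{(p,q)}=n_{k}$ turns the product of linear forms in $s+\mathcal{F}^{(p,q)}$ into
\[
\prod_{k=2}^{r}\prod_{\lambda=1}^{N_{k-1,k}^{(p,q)}}\bigl(s+n_{k}-N_{k-1,k}^{(p,q)}+\lambda\bigr),
\]
where empty inner products (when $N_{k-1,k}^{(p,q)}=0$) contribute trivially. Splitting the outer index $k$ into the four ranges of the rank table yields the three-fold product of Theorem~\ref{thm:Bfun} term by term. I expect the main obstacle to be the rank computation on the middle segments: showing that the image of the difference map $\varphi_{\kappa}^{A^{(p,q)}}$ is exactly $\overline{n}_{\nu(\alpha+\kappa)}$-dimensional requires tracking how kernels and images propagate across successive sinks and sources, and verifying simultaneously that no smaller rank is achievable while preserving $f_{(p,q)}\neq 0$. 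The alternating sign pattern in the definition of $\overline{n}$ is precisely what comes out of this propagation, which is reassuring but must be checked inductively.
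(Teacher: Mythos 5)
Your proposal is sound and, at bottom, takes the same route as the paper: both arguments reduce the lemma to identifying the adjacent ranks $N_{k-1,k}^{(p,q)}$ with the alternating sums $n_{p},\overline{n}_{\nu(\alpha)},\dots,\overline{n}_{\nu(\beta)}$ along the monotone segments of $Q^{(p,q)}$, and then matching factors against Theorem~\ref{thm:Bfun}. The paper packages the first step diagrammatically: the exact lace diagram of $\mathcal{O}^{(p,q)}$ is built segment by segment with exactly $n_{p}$, then $n_{p+1}-n_{p}$, etc., strands between consecutive columns, and the proof consists of observing that these strand counts are precisely the quantities produced, column by column, by the cut-off operations \eqref{form:ReductionI} and \eqref{form:ReductionII} in the inductive proof of Theorem~\ref{thm:Bfun} (Figures~\ref{fig:ArrowsAndFactorsI} and~\ref{fig:ArrowsAndFactorsII}). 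Your rank table is just the arrow count of that diagram, so your closed-form matching and the paper's step-by-step matching are the same computation organized differently; what your version adds is an explicit verification that the exact lace diagram represents the minimal orbit, which the paper asserts in the definition of exact lace diagrams and checks only on examples.

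One intermediate assertion in your sketch is false as stated and must be repaired: invertibility of $Y_{(p,q)}$ does \emph{not} force ``each block entering $Y_{(p,q)}$ to have the maximal rank allowed by its dimensions.'' In Example~\ref{exmp:SecondExampleIV} with $(p,q)=(1,4)$, the block $X_{2,3}$ is $5\times 7$, so its maximal possible rank is $5$, yet $N_{23}^{(1,4)}=3$ at the minimal point. The correct statement concerns induced maps: $\det Y_{(1,4)}\neq 0$ forces $X_{2,1}$ injective, $X_{4,3}$ surjective, and the restriction of $X_{2,3}$ to $\ker X_{4,3}$ injective with image meeting the image of $X_{2,1}$ trivially, whence $\rank X_{2,3}\geq n_{2}-n_{1}=3$; minimality in the Abeasis--Del Fra closure order then gives equality. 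This is exactly the kernel/image propagation you defer to the end as ``to be checked inductively,'' and it is where the real content of your rank table lies: carried out along successive sinks and sources, it yields the lower bound $N_{k-1,k}^{(p,q)}\geq \overline{n}_{\nu(\alpha+\kappa)}$ on each segment, with equality realized by your coordinate inclusion/projection representative. With that fixed, the substitution into \eqref{form:DefOfFpq} and the three-range splitting are correct and complete. Mind also the degenerate case $\beta<\alpha$ (no sink or source strictly between $p$ and $q$, e.g.\ $(p,q)=(3,4)$ in Example~\ref{exmp:FirstExampleII}), where your first range must be truncated at $q$ rather than run to $\nu(\alpha)$, since as written it conflicts with your clause that $N_{k-1,k}^{(p,q)}=0$ for $k\notin[p+1,q]$.
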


The rest of this section is devoted to the proof of the above
lemma. 
To calculate the rank parameters, we need to construct the
lace diagrams corresponding to the locally closed orbits; let us recall
the definition of the lace diagrams (cf.~\cite{BR},\cite{KMS}).

\begin{definition}[lace diagrams]
 \label{def:LaceDiagrams}
 Consider a $GL(\underline{n})$-orbit $\mathcal{O}$ in
 $\Rep(Q,\underline{n})$.
 \begin{enumerate}
  \item  A {\it lace diagram} for the orbit $\mathcal{O}$ is
 a sequence of $r$-columns of dots, with $n_{i}$ dots in the
 $i$-th column, together with line segments connecting dots of
 consecutive columns. Each dot may be connected to at most one dot in
 the column to the left of it, and to at most one dot in the column
 to the right of it.
 \item 	Take an element $A\in \mathcal{O}$ and decompose $A$
 into the direct sum of indecomposable representations as
 \[
  A\cong \bigoplus_{1\leq i\leq j\leq r} m_{ij} I_{ij}.
 \]
 Here $I_{ij}$ is the indecomposable representation corresponding
 to the interval $[i,j]$ and $m_{ij}$ is the multiplicity. 
 See \eqref{form:IndecomposableDecomposition}. 
 \item Draw arrows between dots in such a way that there exist
 exactly $m_{ij}$ line segments starting at $i$ and terminating
 at $j$. In other words, if we take a suitably chosen basis of
 $L_{i}$ and identify them with the dots of the $i$-th column, 
 then each linear map 
 $A_{h(a), t(a)}: L_{t(a)} \to L_{h(a)}$ is given according to the
 connections between dots.  Namely, if dot $j$ of
 column $t(a)$ is connected to dot $k$ of column $h(a)$, then
 $A_{t(a), h(a)}$ maps the $j$-th basis element of $L_{t(a)}$
 to the $k$-th basis element of $L_{h(a)}$; and if dot $j$ of
 column $t(a)$ is not connected to any dot in column $h(a)$,
 then the corresponding basis element of $L_{t(a)}$ is mapped to
 zero.
 \item	Two consecutive columns connected with a rightward
 arrow (resp.\ leftward arrow) are bottom-aligned
 (resp.\ top-aligned). This 
 convention comes from \cite[p.~467]{Abeasis}.
 \end{enumerate} 
\end{definition}

\begin{example}
 \label{exmp:FirstExampleIV}
 Let $\underline{n}=(2, 5, 6,6,2)$ in
 Example~\ref{exmp:FirstExampleII} (see also
 Example~\ref{exmp:FirstExampleIII}). 
 Then we have
 \begin{align*}
  b_{(3,4)}(s)&= (s+1)(s+2)(s+3)(s+4)(s+5)(s+6), \\
  b_{(1,5)}(s)&= (s+1)(s+2)(s+4)(s+5)^3 (s+6)^2.
 \end{align*}

 Now the lace diagrams corresponding to the locally closed orbits
 $\mathcal{O}^{(3, 4)}$ and $\mathcal{O}^{(1, 5)}$ are given as
 Figure~\ref{fig:FirstExample}.
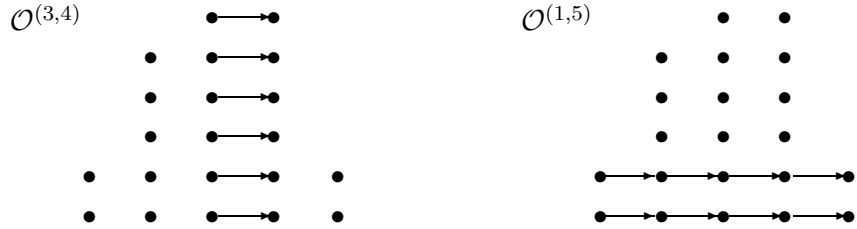
\begin{figure}[htbp]
  \begin{center}
  \begin{picture}(100,90)(0,10)
   \put(-20,80){$\mathcal{O}^{(3,4)}$}
   \put(7,7){$\bullet$}
   \put(7,22){$\bullet$}
   \put(30,7){$\bullet$}
   \put(30,22){$\bullet$}
   \put(30,37){$\bullet$}
   \put(30,52){$\bullet$}
   \put(30,67){$\bullet$}
   \put(53,7){$\bullet$}
   \put(53,22){$\bullet$}
   \put(53,37){$\bullet$}
   \put(53,52){$\bullet$}
   \put(53,67){$\bullet$}
   \put(53,82){$\bullet$}
   \put(58,10){\vector(1,0){20}}
   \put(58,25){\vector(1,0){20}}
   \put(58,40){\vector(1,0){20}}
   \put(58,55){\vector(1,0){20}}
   \put(58,70){\vector(1,0){20}}
   \put(58,85){\vector(1,0){20}}            
   \put(76,7){$\bullet$}
   \put(76,22){$\bullet$}
   \put(76,37){$\bullet$}
   \put(76,52){$\bullet$}
   \put(76,67){$\bullet$}
   \put(76,82){$\bullet$}
   \put(100,7){$\bullet$}
   \put(100,22){$\bullet$}   
  \end{picture}
   \qquad\qquad\qquad\qquad
  \begin{picture}(100,90)(0,10)
   \put(-20,80){$\mathcal{O}^{(1,5)}$}   
   \put(7,7){$\bullet$}
   \put(7,22){$\bullet$}
   \put(10,10){\vector(1,0){20}}
   \put(10,25){\vector(1,0){20}}
   \put(30,7){$\bullet$}
   \put(30,22){$\bullet$}
   \put(30,37){$\bullet$}
   \put(30,52){$\bullet$}
   \put(30,67){$\bullet$}
   \put(34,10){\vector(1,0){20}}
   \put(34,25){\vector(1,0){20}}
   \put(53,7){$\bullet$}
   \put(53,22){$\bullet$}
   \put(53,37){$\bullet$}
   \put(53,52){$\bullet$}
   \put(53,67){$\bullet$}
   \put(53,82){$\bullet$}
   \put(58,10){\vector(1,0){20}}
   \put(58,25){\vector(1,0){20}}
   \put(76,7){$\bullet$}
   \put(76,22){$\bullet$}
   \put(76,37){$\bullet$}
   \put(76,52){$\bullet$}
   \put(76,67){$\bullet$}
   \put(76,82){$\bullet$}
   \put(82,10){\vector(1,0){20}}
   \put(82,25){\vector(1,0){20}}
   \put(100,7){$\bullet$}
   \put(100,22){$\bullet$}   
  \end{picture}      
  \end{center}
    \caption{Lace diagrams corresponding to $\mathcal{O}^{(3,4)}$
 and $\mathcal{O}^{(1,5)}$}
    \label{fig:FirstExample} 
\end{figure}
Note that if any array in the diagram  is erased, then the condition
$f_{(p,q)}(A)\neq 0$ is not satisfied, and conversely if some extra
array is added, then the minimality condition is not satisfied.
Thus we see that  
 the rank parameters $N^{(3,4)}$ and $N^{(1,5)}$ are given
 by \eqref{form:RankParametersI}.
 \begin{equation}
  \label{form:RankParametersI}
  N^{(3, 4)}: \quad
 \begin{array}[t]{ccccc}
  2& 0& 0& 0& 0\\
  & 5& 0& 0& 0\\
  & & 6& 6& 0\\
  & & & 6& 0\\
  & & & & 2
 \end{array}\qquad\qquad
  N^{(1, 5)}: \quad
 \begin{array}[t]{ccccc}
  2& 2& 2& 2& 2\\
  & 5& 2& 2& 2\\
  & & 6& 2& 2\\
  & & & 6& 2\\
  & & & & 2
 \end{array}
 \end{equation}
 By \eqref{form:DefOfFpq}, the rank parameters read
 $\mathcal{F}^{(3, 4)}, \,
 s+ \mathcal{F}^{(3,4)}$ and $\mathcal{F}^{(1, 5)}, \,
 s+\mathcal{F}^{(1,5)}$  as
 \begin{align*}
  \mathcal{F}^{(3, 4)}&=\left\{
  \emptyset\; ;\; \emptyset\; ; \;
  \{1,2,3,4,5,6\}\; ; \; \emptyset
  \right\}, \\  
  s+\mathcal{F}^{(3, 4)}&=\left\{
  \emptyset\; ;\; \emptyset\; ; \;
  \{s+1,s+2,s+3,s+4,s+5,s+6\}\; ; \; \emptyset
  \right\}, \\
  \mathcal{F}^{(1,5)}&=\left\{
  \{4, 5\}\; ; \; \{5, 6\}\; ; \; \{5, 6\}\; ; \;
  \{1, 2\}
  \right\},  \\
  s+\mathcal{F}^{(1,5)}&=\left\{
  \{s+4, s+5\}\; ; \; \{s+5, s+6\}\; ; \; \{s+5, s+6\}\; ; \;
  \{s+1, s+2\}
  \right\}.
 \end{align*}
 Note that if we multiply all the linear forms contained in
 $s+\mathcal{F}^{(3,4)}$
 (resp.\ $s+\mathcal{F}^{(1,5)}$), then we obtain
  the $b$-function 
 $b_{(3,4)}(s)$ (resp.\ $b_{(1,5)}(s)$).
\end{example}

\begin{example}
 \label{exmp:SecondExampleIV}
 Let $\underline{n}=(2, 5, 7,4,2)$ in
 Example~\ref{exmp:SecondExampleII}
 (see also Example~\ref{exmp:SecondExampleIII}). 
 Then we have
 \begin{align*}
  b_{(1,4)}(s)&= (s+1)(s+2)(s+3)(s+4)^2 (s+5)^2 (s+6)(s+7), \\
  b_{(2,5)}(s)&=
  (s+1)(s+2)(s+3)^2(s+4)^2 (s+5)(s+6)(s+7).
 \end{align*}
 The lace diagrams corresponding to $\mathcal{O}^{(1, 4)}$ and
 $\mathcal{O}^{(2, 5)}$ are given as in
 Figure~\ref{fig:SecondExample}.
 \begin{figure}[htbp]
  \begin{center}
  \begin{picture}(100,105)(0,10)
   \put(-30,90){$\mathcal{O}^{(1,4)}$}
   \put(7,37){$\bullet$}
   \put(7,52){$\bullet$}
   \put(10,40){\vector(1,0){20}}
   \put(10,55){\vector(1,0){20}}   
   \put(30,37){$\bullet$}
   \put(30,52){$\bullet$}
   \put(30,67){$\bullet$}
   \put(30,82){$\bullet$}
   \put(30,97){$\bullet$}
   \put(53,70){\vector(-1,0){20}}      
   \put(53,85){\vector(-1,0){20}}   
   \put(53,100){\vector(-1,0){20}}
   \put(53,7){$\bullet$}
   \put(53,22){$\bullet$}
   \put(53,37){$\bullet$}
   \put(53,52){$\bullet$}
   \put(53,67){$\bullet$}
   \put(53,82){$\bullet$}
   \put(53,97){$\bullet$}
   \put(56,10){\vector(1,0){20}}
   \put(56,25){\vector(1,0){20}}
   \put(56,40){\vector(1,0){20}}
   \put(56,55){\vector(1,0){20}}            
   \put(76,7){$\bullet$}
   \put(76,22){$\bullet$}
   \put(76,37){$\bullet$}
   \put(76,52){$\bullet$}
  \put(100,37){$\bullet$}
  \put(100,52){$\bullet$}
  \end{picture}
   \qquad\qquad\qquad\qquad
  \begin{picture}(100,105)(0,10)
   \put(-30,90){$\mathcal{O}^{(2,5)}$}
   \put(7,37){$\bullet$}
   \put(7,52){$\bullet$}
   \put(30,37){$\bullet$}
   \put(30,52){$\bullet$}
   \put(30,67){$\bullet$}
   \put(30,82){$\bullet$}
   \put(30,97){$\bullet$}
   \put(53,40){\vector(-1,0){20}}            
   \put(53,55){\vector(-1,0){20}}         
   \put(53,70){\vector(-1,0){20}}      
   \put(53,85){\vector(-1,0){20}}   
   \put(53,100){\vector(-1,0){20}}
   \put(53,7){$\bullet$}
   \put(53,22){$\bullet$}
   \put(53,37){$\bullet$}
   \put(53,52){$\bullet$}
   \put(53,67){$\bullet$}
   \put(53,82){$\bullet$}
   \put(53,97){$\bullet$}
   \put(56,10){\vector(1,0){20}}
   \put(56,25){\vector(1,0){20}}
   \put(76,7){$\bullet$}
   \put(76,22){$\bullet$}
   \put(76,37){$\bullet$}
   \put(76,52){$\bullet$}
  \put(100,37){$\bullet$}
  \put(100,52){$\bullet$}
  \put(100,40){\vector(-1,0){20}}
  \put(100,55){\vector(-1,0){20}}                  
  \end{picture}         
  \end{center}
    \caption{Lace diagrams corresponding to
 $\mathcal{O}^{(1,4)}$ and $\mathcal{O}^{(2,5)}$}
    \label{fig:SecondExample} 
 \end{figure}
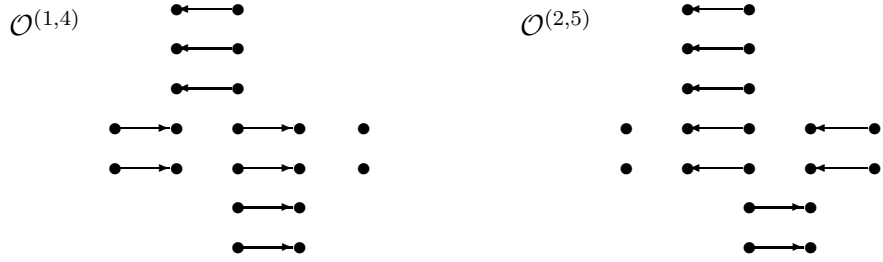 
 Here the reader is refereed to the condition~4 of
 Definition~\ref{def:LaceDiagrams}. Now we see that 
 the rank parameters $N^{(1,4)}$ and $N^{(2,5)}$ are given by
 \eqref{form:RankParametersII}.
 \begin{equation}
  \label{form:RankParametersII}
  N^{(1, 4)}: \quad
 \begin{array}[t]{ccccc}
  2& 2& 5& 9& 9\\
  & 5& 3& 7& 7\\
  & & 7& 4& 4\\
  & & & 4& 0\\
  & & & & 2
 \end{array}\qquad\qquad
  N^{(2, 5)}: \quad
 \begin{array}[t]{ccccc}
  2& 0& 5& 7& 9\\
  & 5& 5& 7& 9\\
  & & 7& 2& 4\\
  & & & 4& 2\\
  & & & & 2
 \end{array}
 \end{equation}
 Hence we observe that $\mathcal{F}^{(1,4)}, s+ \mathcal{F}^{(1,4)}$
 and $\mathcal{F}^{(2,5)}, s+\mathcal{F}^{(2,5)}$ are
 \begin{align*}
  \mathcal{F}^{(1, 4)}&=\left\{
  \{4, 5\}\; ;\; \{5, 6,7\}\; ; \;
  \{1,2,3,4\}\; ; \; \emptyset
  \right\}, \\
  s+\mathcal{F}^{(1, 4)}&=\left\{
  \{s+4, s+5\}\; ;\; \{s+5, s+6,s+7\}\; ; \;
  \{s+1,s+2,s+3,s+4\}\; ; \; \emptyset
  \right\}, \\  
  \mathcal{F}^{(2,5)}&=\left\{
  \emptyset\; ; \; \{3,4,5, 6,7\}\; ; \; \{3, 4\}\; ; \;
  \{1, 2\}
  \right\},\\
  s+\mathcal{F}^{(2,5)}&=\left\{
  \emptyset\; ; \; \{s+3,s+4,s+5,s+6,s+7\}\; ; \;
  \{s+3,s+4\}\; ; \;\{s+1, s+2\}
  \right\},  
 \end{align*}
 and that if we multiply all the linear forms contained in
  $s+\mathcal{F}^{(1,4)}$ (resp.\ $s+\mathcal{F}^{(2,5)}$), then
 we obtain  the $b$-function
 $b_{(1,4)}(s)$ (resp.\ $b_{(2,5)}(s)$). 
\end{example}

Now we accurately describe how to construct
the lace diagrams such as Figures~\ref{fig:FirstExample}
and~\ref{fig:SecondExample}.
 
\begin{definition}[exact lace diagrams]
 Let $(p,q)\in I_{\underline{n}}(Q)$ and $\mathcal{O}^{(p,q)}$
 the closed orbit in
 $\{A\in \Rep(Q,\underline{n})\, ;\, f_{(p,q)}(A)\neq 0\}$.
 Then we construct the lace diagram corresponding to
 $\mathcal{O}^{(p,q)}$ according to the following convention.
 \begin{enumerate}
  \item For $\nu=p,\dots, q$, 
        let $e_{1}^{(\nu)},\dots, e_{n_{\nu}}^{(\nu)}$ be a basis of $L_{\nu}$
	and we identify these vectors with the dots in
	column $\nu$ in the lace diagram.
  \item First we consider the case of $\delta(p)=1$, i.e., we 
	assume that	
	the arrow $p \in Q_{1}$ is rightward. Then, from each
	$e_{j}^{(p)}$ for $j=1,\dots, n_{p}$, we draw a {\it horizontal}
	arrow to a	
	dot in column $(p+1)$.
	Choose a suitable order of the basis of $L_{p+1}$ 
	so that these arrows give a
	one-to-one correspondence between
	$e_{1}^{(p)},\dots, e_{n_{p}}^{(p)}$ and
	$e_{1}^{(p+1)},\dots, e_{n_{p}}^{(p+1)}$. Stop here
	if $p+1=q$. 
  \item If $p+1< q$ and $\delta(p+1) = 1$,
	then we draw a horizontal arrow
	from each $e_{j}^{(p+1)}$ for $j=1,\dots, n_{p}$ 
	to a dot in column $(p+2)$.
	If $p+1< q$ and $\delta(p+1) = -1$, then we draw a horizontal
	arrow {\it to} each $e_{j}^{(p+1)}$ for $j= n_{p}+1,\dots,
	n_{p+1}$ {\it from} some dot in column $(p+2)$.
	Continue to draw arrows until
	we reach the column $q$. 
 \item  In the case of $\delta(p)= -1$, we reverse the orientations in the 
	above 2., 3. 
 \end{enumerate}
 We remark that in general, two different lace diagrams may correspond
 to the same orbit. However, under the above convention,
 the locally closed orbit $\mathcal{O}^{(p, q)}$ uniquely determines the
 lace diagram. We call it {\it the exact lace diagram} corresponding to
 $\mathcal{O}^{(p,q)}$.
\end{definition}

\begin{proof}[Proof of Lemma~\ref{lem:BfunAndRankArrays}]
 We compare the construction of the exact lace diagrams with
 the reduction formulas \eqref{form:ReductionI} and
 \eqref{form:ReductionII}. 
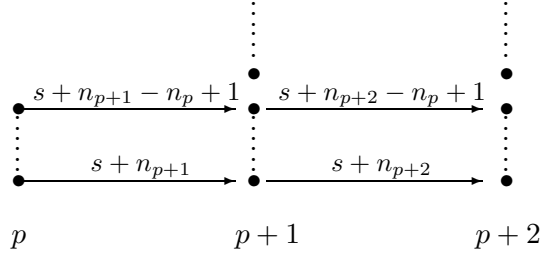
\begin{figure}
 \begin{center}
  \begin{picture}(220,100)(0,10)
   \setlength{\unitlength}{0.9pt}
   \put(7,7){$p$}
   \put(100,7){$p+1$}
   \put(200,7){$p+2$}   
   \put(7,30){$\bullet$}
   \put(8,37){$\vdots$}
   \put(8,50){$\vdots$}   
   \put(105,30){$\bullet$}
   \put(106,37){$\vdots$}
   \put(106,50){$\vdots$}   
   \put(210,30){$\bullet$}
   \put(211,37){$\vdots$}
   \put(211,50){$\vdots$}      
   \put(7,60){$\bullet$}
   \put(105,60){$\bullet$}
   \put(105,75){$\bullet$}
   \put(106,85){$\vdots$}
   \put(106,98){$\vdots$}      
   \put(210,60){$\bullet$}
   \put(210,75){$\bullet$}
   \put(211,85){$\vdots$}
   \put(211,98){$\vdots$}         
   \put(10,33){\vector(1,0){90}}
   \put(10,63){\vector(1,0){90}}
   \put(113,33){\vector(1,0){90}}
   \put(113,63){\vector(1,0){90}}   
   \put(40,37){\small $s+n_{p+1}$}   
   \put(16,67){\small $s+n_{p+1}-n_{p}+1$}
   \put(140,37){\small $s+n_{p+2}$}   
   \put(118,67){\small $s+n_{p+2}-n_{p}+1$}   
  \end{picture}
 \end{center}
 \caption{Reduction formula \eqref{form:ReductionI} and the 
 exact lace diagram}
 \label{fig:ArrowsAndFactorsI}
\end{figure}
 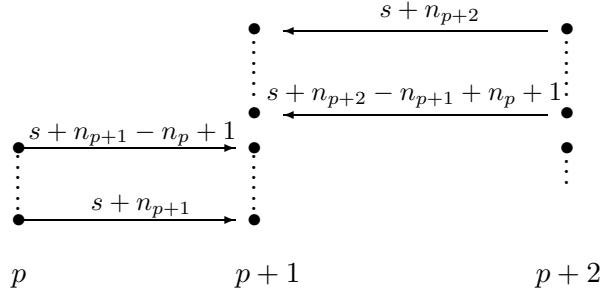
\begin{figure}
 \begin{center}
  \begin{picture}(240,120)(0,10)
      \setlength{\unitlength}{0.9pt}
   \put(7,7){$p$}
   \put(100,7){$p+1$}
   \put(225,7){$p+2$}   
   \put(7,30){$\bullet$}
   \put(8,37){$\vdots$}
   \put(8,50){$\vdots$}   
   \put(105,30){$\bullet$}
   \put(106,37){$\vdots$}
   \put(106,50){$\vdots$}   
   \put(236,48){$\vdots$}      
   \put(7,60){$\bullet$}
   \put(105,60){$\bullet$}
   \put(105,75){$\bullet$}
   \put(106,85){$\vdots$}
   \put(106,98){$\vdots$}
   \put(105,110){$\bullet$}   
   \put(235,60){$\bullet$}
   \put(235,75){$\bullet$}
   \put(236,85){$\vdots$}
   \put(236,98){$\vdots$}
   \put(235,110){$\bullet$}
   \put(10,33){\vector(1,0){90}}
   \put(10,63){\vector(1,0){90}}
   \put(230,77){\vector(-1,0){110}}
   \put(230,112){\vector(-1,0){110}}      
   \put(40,37){\small $s+n_{p+1}$}   
   \put(14,67){\small $s+n_{p+1}-n_{p}+1$}
   \put(113,83){\small $s+n_{p+2}-n_{p+1}+n_{p}+1$}
   \put(160,118){\small $s+n_{p+2}$}      
  \end{picture}
 \end{center}
 \caption{Reduction formula \eqref{form:ReductionII} and the
 exact lace diagram}
 \label{fig:ArrowsAndFactorsII}  
\end{figure} 
 Then we see that there exists a one-to-one correspondence between
 the arrows in the exact lace diagrams and the factors of
 $b$-functions. 
 See Figures~\ref{fig:ArrowsAndFactorsI} and~\ref{fig:ArrowsAndFactorsII}. 
 Thus the lemma is proved. 
\end{proof} 

\section{$b$-Functions of several variables}
\label{section:BfunctionsOfSeveralVariables}

In this section, we describe an algorithm to calculate the
multi-variate $b$-function
$b_{\underline{m}}(\underline{s})$ of $(GL(\underline{n}),
\Rep(Q,\underline{n}))$, and give some examples. 
We take an arbitrary numbering on
$I_{\underline{n}}(Q)$ and let
\[
 I_{\underline{n}}(Q) = \left\{(p_{1},q_{1}),(p_{2},q_{2}),\dots,
 (p_{l}, q_{l})
 \right\}.
\]
In the following, we write as
$f_{(p_{1},q_{1})}= f_{1},\, N^{(p_{2}, q_{2})} =
N^{(2)},\, \mathcal{F}^{(p_{3},q_{3})}=\mathcal{F}^{(3)},\dots$.

\bigskip

\noindent
{\large $1^{\circ}.$}  \ First, for given linear forms
$s_{i_{1}} + \alpha,\,
s_{i_{2}} + \alpha, \, \cdots, s_{i_{t}} + \alpha$ with the
{\it same constant term}, we define the 
{\it superposition operation} as follows:
\begin{equation}
 s_{i_{1}} + \alpha,\;
s_{i_{2}} + \alpha, \cdots, s_{i_{t}} + \alpha \quad
\longmapsto \quad
s_{i_{1}}+s_{i_{2}}+\cdots + s_{i_{t}} + \alpha
\end{equation}
Carry out this operation on the $(k-1)$-th components
\[
\begin{array}{c}
 \left\{ s_{1}+N_{k,k-1}^{(1)}- N_{kk}^{(1)}+1,\dots,
      s_{1} + N_{kk}^{(1)}\, (=s_{1} + n_{k})
      \right\}, \\[8pt]
 \left\{ s_{2}+N_{k,k-1}^{(2)}- N_{kk}^{(2)}+1,\dots,
      s_{2} + N_{kk}^{(2)}\, (=s_{2} + n_{k})
      \right\}, \\[8pt]
 \multicolumn{1}{c}{
 \dots\dots\dots\dots\dots\dots}  \\[8pt]
 \left\{ s_{l}+N_{k,k-1}^{(l)}- N_{kk}^{(l)}+1,\dots,
      s_{l} + N_{kk}^{(l)}\, (=s_{l} + n_{k})
      \right\}
\end{array}
\]
of 
$s_{1}+\mathcal{F}^{(1)}, s_{2}+
\mathcal{F}^{(2)},\dots, s_{l}+ \mathcal{F}^{(l)}$.
Here we ignore the empty set $\emptyset$.
\begin{example}
 From
 \[
  \begin{array}{r}
   \{s_{1}+3, \, s_{1}+4, \, s_{1}+5\},\\
   \{s_{2}+4,\, s_{2}+5\}, \\
   \emptyset,\\   
   \{s_{4}+1,\, s_{4}+2, \, s_{4}+3, \, s_{4}+4,\,
    s_{4}+5\},
  \end{array}
 \]
 we obtain the following new linear forms
 \[
  s_{4}+1, \; s_{4}+ 2,\;
 s_{1}+s_{4}+3, \; s_{1}+s_{2}+s_{4}+4,\;
 s_{1}+s_{2}+s_{4}+5.
 \]
\end{example}

\medskip

\noindent
{\large $2^{\circ}.$}  \ Carry out the operation {$1^{\circ}$}
for all $k=2,\dots, r$.

\medskip

\noindent
{\large $3^{\circ}.$} \ Substitute the linear forms
obtained in {$2^{\circ}$} by the rule
\[
 s_{i_{1}} + s_{i_{2}}+\cdots +s_{i_{t}} + \alpha\quad  \longmapsto
 \quad
 [s_{i_{1}} + s_{i_{2}}+\cdots +s_{i_{t}} + \alpha]_{
 m_{i_{1}} + m_{i_{2}}+\cdots +m_{i_{t}}},
\]
and then multiply all of them together.
Here the square parentheses symbol stands for
\[
 [A]_{m}:= A(A+1)(A+2)\cdots (A+m-1).
\]

\begin{theorem}
 \label{theorem:MainTheorem}
 The output of the operation $3^{\circ}$ is the 
$b$-function $b_{\underline{m}}(\underline{s})$ of
$\underline{f}=(f_{1},\dots, f_{l})$.
\end{theorem}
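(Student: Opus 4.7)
The strategy is to proceed by induction on the length $r$ of the quiver $Q$, peeling off one vertex at a time as in the proof of Theorem~\ref{thm:Bfun}, but now tracking all $l$ relative invariants simultaneously. I expect to use the $A$-functions developed in Section~\ref{section:Afunctions} as the multi-variate refinement of the cut-off apparatus.

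First, I would establish a multi-variate analog of the basic Cayley-type identity underlying Lemma~\ref{prop:b-funOf1var}. Suppose several fundamental relative invariants $f_{i_{1}},\dots,f_{i_{t}}$ all ``pass through'' a common matrix block $X_{h(a),t(a)}$---this is precisely the situation in which, after the preceding reduction steps, certain rows/columns of this block appear inside the same determinantal slot for all of $f_{i_{1}},\dots,f_{i_{t}}$. In the superposed exact lace diagrams such a situation is recorded as a collection of arrows sharing the same vertical position in the same column, with common constant $\alpha$. I would show that applying $\prod_{j} f_{i_{j}}^{*\,m_{i_{j}}}(\grad_{v})$ to $\prod_{j} f_{i_{j}}^{s_{i_{j}}+m_{i_{j}}}$ produces, as the contribution of this common block, a Pochhammer-type factor
\[
 [s_{i_{1}}+s_{i_{2}}+\cdots+s_{i_{t}}+\alpha]_{m_{i_{1}}+m_{i_{2}}+\cdots+m_{i_{t}}},
\]
which is exactly the transformation rule in Step $3^{\circ}$.

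Second, I would promote the reduction formulas \eqref{form:ReductionI} and \eqref{form:ReductionII} to the multi-variate setting. At each inductive step, choose the leftmost (or a suitable extremal) vertex of $Q$ and perform the block decomposition $V = E \oplus F$ as in \cite[(2.2)]{SatoSugi}, then specialize the $F$-variable to a standard form $X^{0}$ as was done in the proof of Lemma~4.2 in the single-variable case. The scalar factor extracted at this step should match, invariant by invariant, the contribution of the leftmost column of the superposed exact lace diagrams, i.e.\ the $k=2$ stage of operation $1^{\circ}$; the residual object is the multi-variate $b$-function of a quiver of length $r-1$ with a modified dimension vector, to which the inductive hypothesis applies.

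Third, I would verify by direct comparison, column by column, that operations $1^{\circ}$--$3^{\circ}$ reproduce exactly the factors output at each inductive reduction step: the requirement that the linear forms share a constant term $\alpha$ in Step $1^{\circ}$ matches precisely the condition that the corresponding arrows in the diagrams for $f_{i_{1}},\dots,f_{i_{t}}$ coincide, and the exponent $m_{i_{1}}+\cdots+m_{i_{t}}$ of the Pochhammer symbol in Step $3^{\circ}$ records the total number of differentiations applied at that shared arrow.

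The main obstacle will be the multi-variate Cayley/Pochhammer identity at a shared block. In the single-variable case this is the classical formula \eqref{form:Cayley'sFormula}, but allowing several relative invariants of different shapes to simultaneously involve one common block requires showing not just that the resulting scalar has the correct degree but that it factors into the precise Pochhammer symbol claimed. I expect this is exactly what the $A$-function machinery of Section~\ref{section:Afunctions} is designed to deliver, and combining it with the Sato--Sugiyama decomposition then forces the inductive step through.
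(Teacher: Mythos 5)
Your plan has a genuine gap at its center, and you have in fact located it yourself: the ``multi-variate Cayley/Pochhammer identity at a shared block'' is not something the $a$-function machinery can deliver. What Section~\ref{section:Afunctions} proves (via Lemma~\ref{lemma:GradLogF} and M.~Sato's structure theorem) is only that $b_{\underline{m}}(\underline{s})$ is a product of Pochhammer symbols $[s_{i_{1}}+\cdots+s_{i_{t}}+\alpha]_{m_{i_{1}}+\cdots+m_{i_{t}}}$ whose \emph{linear parts} and multiplicities are read off from the superposed diagrams --- the constants $\alpha_{j,r}$ remain undetermined, as the paper states explicitly after \eqref{form:ByAfunInFirstExample}. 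So deferring the shared-block identity to the $a$-functions is circular: determining those constants \emph{is} the content of Theorem~\ref{theorem:MainTheorem}. Your other pillar, a multivariate promotion of the reductions \eqref{form:ReductionI} and \eqref{form:ReductionII}, also does not exist in the cited form: the Sato--Sugiyama decomposition theorems are statements about the $b$-function of a \emph{single} relative invariant under a multiplicity-one hypothesis, and the cut-off substitution $X^{0}$ that works for one $f_{(p,q)}$ is in general incompatible with the other invariants, whose supports $[p_{i},q_{i}]$ differ and whose exact lace diagrams are aligned differently (top- versus bottom-aligned). After specializing the $F$-block the remaining invariants need not be the fundamental invariants of a smaller triplet, so the induction on the quiver length $r$ does not close; at shared arrows the coupled exponent $m_{i_{1}}+\cdots+m_{i_{t}}$ cannot be produced by treating the invariants one at a time.

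The paper closes exactly this gap by a different mechanism, which your plan never invokes: localization of $b$-functions at the closed orbits $\mathcal{O}^{(i)}$ in $\{f_{i}\neq 0\}$. By Lemma~\ref{lemma:Localization} the $b$-function of the slice representation divides $b_{\underline{m}}(\underline{s})$; by Lemma~\ref{lemma:Restriction} (via Schofield's determinantal invariants, Derksen--Weyman, and perpendicular categories) $f_{i}$ restricts to a constant on its own slice, while Lemmas~\ref{lemma:HomAndExt} and~\ref{lemma:SliceRepresentation} (Shmelkin's local quiver) identify the slice as a smaller type-$A$ triplet on which the \emph{other} invariants restrict to determinantal invariants whose one-variable $b$-functions are known from Theorem~\ref{thm:Bfun}. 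Combinatorially, the arrows of $D^{(2)}$ not overlapping $D^{(1)}$ form a smaller exact lace diagram inside the slice, which pins down the constants of the pure $s_{2}$-factors; subtracting from the specializations $b_{(1,0)}(s,0)=b_{f_{1}}(s)$, $b_{(0,1)}(0,s)=b_{f_{2}}(s)$ then isolates the coupled constants $\alpha_{3,r}$. The induction is on the number $l$ of invariants (each slice has fewer), not on $r$. Without a proved multivariate decomposition theorem or some substitute for this localization step, your outline cannot determine the constant terms, so as written the proof does not go through.
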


The proof of the theorem will be given in the following sections. 
In the remainder of this section, we calculate the $b$-functions
$b_{\underline{m}}(\underline{s})$ for the two examples
discussed in the previous sections.

\begin{example}
  \label{exmp:FirstExampleV}
 In Example~\ref{exmp:FirstExampleIV}, put $f_{1} :=f_{(3,4)}$,
 $f_{2}:= f_{(1,5)}$. For
  \begin{align*}
  s_{1}+ \mathcal{F}^{(1)}&=
   s_{1}+ \mathcal{F}^{(3, 4)}=\left\{
  \emptyset\; ;\; \emptyset\; ; \;
  \{s_{1}+ 1,
   s_{1}+ 2,s_{1}+ 3,s_{1}+ 4,s_{1}+ 5,
   s_{1}+ 6\}\; ; \; \emptyset
  \right\}, \\
  s_{2}+ \mathcal{F}^{(2)}&=
   s_{2}+ \mathcal{F}^{(1,5)}\\
  &=\left\{
  \{s_{2}+4, s_{2}+5\}\; ; \; \{s_{2}+5, s_{2}+6\}\; ; \;
   \{s_{2}+5, s_{2}+6\}\; ; \;
  \{s_{2}+1, s_{2}+2\}
  \right\},
 \end{align*}
 we carry out the operation $1^{\circ}$. Since the $1$-st, $2$-nd,
 $4$-th components of $\mathcal{F}^{(3, 4)}$ are the empty sets,
 we obtain $\{s_{2}+4, s_{2}+5\}\, \, \{s_{2}+5, s_{2}+6\}, \,
 \{s_{2}+1, s_{2}+2\}$ from the $1$-st, $2$-nd,
 $4$-th components, and at the $3$-rd component, we superpose
 the linear forms as follows:
 \[
  \begin{array}{r}
   \{s_{1}+ 1,
   s_{1}+ 2,s_{1}+ 3,s_{1}+ 4,s_{1}+ 5,
   s_{1}+ 6\}\\
   \{s_{2}+5, s_{2}+6\}
  \end{array}\; \longmapsto \;
 \begin{array}{l}
  s_{1}+1, \, s_{1}+2,\, s_{1}+3, \, s_{1}+4, \\
 s_{1}+s_{2}+ 5, \, s_{1}+s_{2}+ 6
 \end{array}.
 \]
 All the linear forms are aligned as
 \[
  \begin{array}{l}
   s_{1}+1, \, s_{1}+2,\, s_{1}+3, \, s_{1}+4, \\
   s_{2}+1, \, s_{2}+2,\, s_{2}+4, \, (s_{2}+5)^{\times 2}, \,
    s_{2}+ 6, \\
    s_{1}+s_{2}+ 5, \, s_{1}+s_{2}+ 6
  \end{array}
 \]
 and by multiplying these factors according to $3^{\circ}$, 
 we obtain the $b$-function
 $b_{\underline{m}}(\underline{s})$. That is, 
 \begin{align}
  \nonumber
  b_{\underline{m}}(\underline{s})&=
  [s_{1}+1]_{m_{1}} [s_{1}+2]_{m_{1}} [s_{1}+3]_{m_{1}}
  [s_{1}+4]_{m_{1}} \\
  \label{form:BfunOfFirstExample}
  &\quad \times
    [s_{2}+1]_{m_{2}}[s_{2}+2]_{m_{2}} [s_{2}+4]_{m_{2}}
  [s_{2}+5]_{m_{2}}^{2} [s_{2}+ 6]_{m_{2}}\\
  \nonumber
  &\quad \times
  [s_{1}+s_{2}+ 5]_{m_{1}+m_{2}} [s_{1}+s_{2}+ 6]_{m_{1}+m_{2}}.
 \end{align}
 The aspect of the superposition can be visualized as follows:
 First, as in Figure~\ref{fig:LDinFirstExample}, 
 we attach the linear forms in $s_{1}+ \mathcal{F}^{(3,4)}$
 (resp.\ $s_{2}+\mathcal{F}^{(1,5)}$) to the arrows in
 the lace diagram corresponding to $\mathcal{O}^{(3, 4)}$
 (resp.\ $\mathcal{O}^{(1, 5)}$).
 See Figure~\ref{fig:ArrowsAndFactorsI} for the way of attaching
 the factors. 
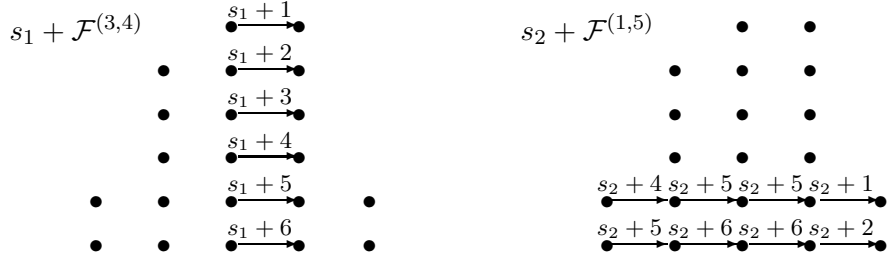
\begin{figure}[htbp]
  \begin{center}
  \begin{picture}(100,90)(0,15)
   \setlength{\unitlength}{1.1pt}
   \put(-20,80){$s_{1}+ \mathcal{F}^{(3,4)}$}
   \put(7,7){$\bullet$}
   \put(7,22){$\bullet$}
   \put(30,7){$\bullet$}
   \put(30,22){$\bullet$}
   \put(30,37){$\bullet$}
   \put(30,52){$\bullet$}
   \put(30,67){$\bullet$}
   \put(53,7){$\bullet$}
   \put(53,22){$\bullet$}
   \put(53,37){$\bullet$}
   \put(53,52){$\bullet$}
   \put(53,67){$\bullet$}
   \put(53,82){$\bullet$}
   \put(58,10){\vector(1,0){20}}
   \put(58,25){\vector(1,0){20}}
   \put(58,40){\vector(1,0){20}}
   \put(58,55){\vector(1,0){20}}
   \put(58,70){\vector(1,0){20}}
   \put(58,85){\vector(1,0){20}}
   \put(54,88){{\footnotesize$s_{1}+1$}}
   \put(54,73){{\footnotesize$s_{1}+2$}}
   \put(54,58){{\footnotesize$s_{1}+3$}}
   \put(54,43){{\footnotesize$s_{1}+4$}}
   \put(54,28){{\footnotesize$s_{1}+5$}}
   \put(54,13){{\footnotesize$s_{1}+6$}}         
   \put(76,7){$\bullet$}
   \put(76,22){$\bullet$}
   \put(76,37){$\bullet$}
   \put(76,52){$\bullet$}
   \put(76,67){$\bullet$}
   \put(76,82){$\bullet$}
   \put(100,7){$\bullet$}
   \put(100,22){$\bullet$}   
  \end{picture}
   \qquad\qquad\qquad\qquad
  \begin{picture}(100,90)(0,15)
      \setlength{\unitlength}{1.1pt}
   \put(-20,80){$s_{2}+\mathcal{F}^{(1,5)}$}   
   \put(7,7){$\bullet$}
   \put(7,22){$\bullet$}
   \put(10,10){\vector(1,0){20}}
   \put(10,25){\vector(1,0){20}}
   \put(6,28){{\footnotesize$s_{2}+4$}}
   \put(6,13){{\footnotesize$s_{2}+5$}}
   \put(30,7){$\bullet$}
   \put(30,22){$\bullet$}
   \put(30,37){$\bullet$}
   \put(30,52){$\bullet$}
   \put(30,67){$\bullet$}
   \put(34,10){\vector(1,0){20}}
   \put(34,25){\vector(1,0){20}}
   \put(30,28){{\footnotesize$s_{2}+5$}}
   \put(30,13){{\footnotesize$s_{2}+6$}}   
   \put(53,7){$\bullet$}
   \put(53,22){$\bullet$}
   \put(53,37){$\bullet$}
   \put(53,52){$\bullet$}
   \put(53,67){$\bullet$}
   \put(53,82){$\bullet$}
   \put(58,10){\vector(1,0){20}}
   \put(58,25){\vector(1,0){20}}
   \put(54,28){{\footnotesize$s_{2}+5$}}
   \put(54,13){{\footnotesize$s_{2}+6$}}
   \put(76,7){$\bullet$}
   \put(76,22){$\bullet$}
   \put(76,37){$\bullet$}
   \put(76,52){$\bullet$}
   \put(76,67){$\bullet$}
   \put(76,82){$\bullet$}
   \put(82,10){\vector(1,0){20}}
   \put(82,25){\vector(1,0){20}}
   \put(78,28){{\footnotesize$s_{2}+1$}}
   \put(78,13){{\footnotesize$s_{2}+2$}}   
   \put(100,7){$\bullet$}
   \put(100,22){$\bullet$}   
  \end{picture}      
  \end{center}
    \caption{Lace diagrams corresponding to $s_{1}+\mathcal{F}^{(3,4)}$
 and $s_{2}+  \mathcal{F}^{(1,5)}$}
 \label{fig:LDinFirstExample}
\end{figure}
 Then we superpose these two diagrams. If two linear forms are
 attached to the same arrow, those two linear forms are also
 superposed as in
 Figure~\ref{fig:SuperpositionInFirstExample}.
\begin{figure}[htbp] 
\begin{center}
   \begin{picture}(100,90)(0,15)
    \setlength{\unitlength}{1.1pt}
   \put(7,7){$\bullet$}
   \put(7,22){$\bullet$}
   \put(10,10){\vector(1,0){20}}
   \put(10,25){\vector(1,0){20}}
   \put(6,28){{\footnotesize$s_{2}+4$}}
   \put(6,13){{\footnotesize$s_{2}+5$}}
   \put(30,7){$\bullet$}
   \put(30,22){$\bullet$}
   \put(30,37){$\bullet$}
   \put(30,52){$\bullet$}
   \put(30,67){$\bullet$}
   \put(34,10){\vector(1,0){20}}
   \put(34,25){\vector(1,0){20}}
   \put(30,28){{\footnotesize$s_{2}+5$}}
   \put(30,13){{\footnotesize$s_{2}+6$}}   
   \put(53,7){$\bullet$}
   \put(53,22){$\bullet$}
   \put(53,37){$\bullet$}
   \put(53,52){$\bullet$}
   \put(53,67){$\bullet$}
   \put(53,82){$\bullet$}
   \put(58,10){\vector(1,0){43}}
   \put(58,25){\vector(1,0){43}}
   \put(58,40){\vector(1,0){43}}
   \put(58,55){\vector(1,0){43}}
   \put(58,70){\vector(1,0){43}}
   \put(58,85){\vector(1,0){43}}
   \put(65,88){{\footnotesize$s_{1}+1$}}        
   \put(65,73){{\footnotesize$s_{1}+2$}}    
   \put(65,58){{\footnotesize$s_{1}+3$}}            
   \put(65,43){{\footnotesize$s_{1}+4$}}        
   \put(59,28){{\footnotesize$s_{1}+s_{2}+5$}}
   \put(59,13){{\footnotesize$s_{1}+s_{2}+6$}}
   \put(100,7){$\bullet$}
   \put(100,22){$\bullet$}
   \put(100,37){$\bullet$}
   \put(100,52){$\bullet$}
   \put(100,67){$\bullet$}
   \put(100,82){$\bullet$}
   \put(105,10){\vector(1,0){20}}
   \put(105,25){\vector(1,0){20}}
   \put(105,28){{\footnotesize$s_{2}+1$}}
   \put(105,13){{\footnotesize$s_{2}+2$}}   
   \put(123,7){$\bullet$}
   \put(123,22){$\bullet$}   
  \end{picture}      
\end{center}
 \caption{Superposition of the lace diagrams in
 Example~\ref{exmp:FirstExampleV}}
 \label{fig:SuperpositionInFirstExample}
 \end{figure}
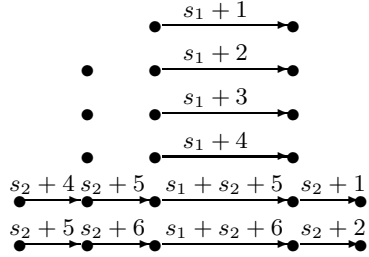
\end{example}

\begin{example}
 \label{exmp:SecondExampleV}
 In Example~\ref{exmp:SecondExampleIV}, we put 
 $f_{1}:= f_{(1,4)}$, $f_{2}:=f_{(2, 5)}$. For 
 \begin{align*}
  s_{1}+ \mathcal{F}^{(1,4)}&=\left\{
  \{s_{1}+ 4, s_{1}+ 5\}\; ;\; \{s_{1}+ 5,
  s_{1}+ 6,s_{1}+ 7\}\; ; \;
  \{s_{1}+ 1,s_{1}+ 2,s_{1}+ 3,s_{1}+ 4\}\; ; \; \emptyset
  \right\}, \\
  s_{2}+ \mathcal{F}^{(2,5)}
  &=\left\{
  \emptyset\; ; \; \{
  s_{2}+ 3,s_{2}+ 4,s_{2}+ 5, s_{2}+ 6,s_{2}+ 7\}\; ; \;
  \{s_{2}+ 3, s_{2}+ 4\}\; ; \;
  \{s_{2}+ 1, s_{2}+ 2\}
  \right\}, 
 \end{align*}
 we perform the operations
 $1^{\circ}, 2^{\circ}, 3^{\circ}$, and obtain
 \begin{align}
  \nonumber
  b_{\underline{m}}(\underline{s})&=
  [s_{1}+1]_{m_{1}}[s_{1}+2]_{m_{1}}[s_{1}+4]_{m_{1}}
  [s_{1}+5]_{m_{1}} \\
  \label{form:BfunOfSecondExample}
  &\quad\times
  [s_{2}+1]_{m_{2}}[s_{2}+2]_{m_{2}}[s_{2}+3]_{m_{2}}
  [s_{2}+4]_{m_{2}} \\
  \nonumber
  &\quad \times
  [s_{1}+s_{2}+ 3]_{m_{1}+m_{2}}
  [s_{1}+s_{2}+ 4]_{m_{1}+m_{2}}
  [s_{1}+s_{2}+ 5]_{m_{1}+m_{2}}\\
  \nonumber
  &\quad\times
  [s_{1}+s_{2}+ 6]_{m_{1}+m_{2}}
  [s_{1}+s_{2}+ 7]_{m_{1}+m_{2}}.  
 \end{align}
 Also in this case, the aspect of the superposition can be interpreted
 visually. 
 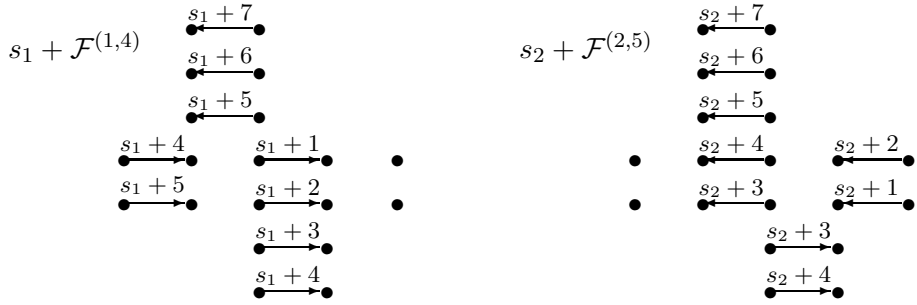
\begin{figure}[htbp]
  \begin{center}
  \begin{picture}(100,105)(0,15)
    \setlength{\unitlength}{1.1pt}
   \put(-30,90){$s_{1}+\mathcal{F}^{(1,4)}$}
   \put(7,37){$\bullet$}
   \put(7,52){$\bullet$}
   \put(10,40){\vector(1,0){20}}
   \put(10,55){\vector(1,0){20}}
   \put(8,44){{\footnotesize$s_{1}+5$}}
   \put(8,58){{\footnotesize$s_{1}+4$}}   
   \put(30,37){$\bullet$}
   \put(30,52){$\bullet$}
   \put(30,67){$\bullet$}
   \put(30,82){$\bullet$}
   \put(30,97){$\bullet$}
   \put(53,70){\vector(-1,0){20}}      
   \put(53,85){\vector(-1,0){20}}   
   \put(53,100){\vector(-1,0){20}}
   \put(31,73){{\footnotesize$s_{1}+5$}}
   \put(31,88){{\footnotesize$s_{1}+6$}}
   \put(31,103){{\footnotesize$s_{1}+7$}}         
   \put(53,7){$\bullet$}
   \put(53,22){$\bullet$}
   \put(53,37){$\bullet$}
   \put(53,52){$\bullet$}
   \put(53,67){$\bullet$}
   \put(53,82){$\bullet$}
   \put(53,97){$\bullet$}
   \put(56,10){\vector(1,0){20}}
   \put(56,25){\vector(1,0){20}}
   \put(56,40){\vector(1,0){20}}
   \put(56,55){\vector(1,0){20}}
   \put(54,13){{\footnotesize$s_{1}+4$}}
   \put(54,28){{\footnotesize$s_{1}+3$}}
   \put(54,43){{\footnotesize$s_{1}+2$}}
   \put(54,58){{\footnotesize$s_{1}+1$}}            
   \put(76,7){$\bullet$}
   \put(76,22){$\bullet$}
   \put(76,37){$\bullet$}
   \put(76,52){$\bullet$}
  \put(100,37){$\bullet$}
  \put(100,52){$\bullet$}
  \end{picture}
   \qquad\qquad\qquad\qquad
  \begin{picture}(100,105)(0,15)
    \setlength{\unitlength}{1.1pt}
   \put(-30,90){$s_{2}+\mathcal{F}^{(2,5)}$}
   \put(7,37){$\bullet$}
   \put(7,52){$\bullet$}
   \put(30,37){$\bullet$}
   \put(30,52){$\bullet$}
   \put(30,67){$\bullet$}
   \put(30,82){$\bullet$}
   \put(30,97){$\bullet$}
   \put(53,40){\vector(-1,0){20}}            
   \put(53,55){\vector(-1,0){20}}         
   \put(53,70){\vector(-1,0){20}}      
   \put(53,85){\vector(-1,0){20}}   
   \put(53,100){\vector(-1,0){20}}
   \put(31,43){{\footnotesize$s_{2}+3$}}      
   \put(31,58){{\footnotesize$s_{2}+4$}}   
   \put(31,73){{\footnotesize$s_{2}+5$}}
   \put(31,88){{\footnotesize$s_{2}+6$}}
   \put(31,103){{\footnotesize$s_{2}+7$}}            
   \put(53,7){$\bullet$}
   \put(53,22){$\bullet$}
   \put(53,37){$\bullet$}
   \put(53,52){$\bullet$}
   \put(53,67){$\bullet$}
   \put(53,82){$\bullet$}
   \put(53,97){$\bullet$}
   \put(56,10){\vector(1,0){20}}
   \put(56,25){\vector(1,0){20}}
   \put(54,13){{\footnotesize$s_{2}+4$}}
   \put(54,28){{\footnotesize$s_{2}+3$}}            
   \put(76,7){$\bullet$}
   \put(76,22){$\bullet$}
   \put(76,37){$\bullet$}
   \put(76,52){$\bullet$}
   \put(77,43){{\footnotesize$s_{2}+1$}}
   \put(77,58){{\footnotesize$s_{2}+2$}}      
  \put(100,37){$\bullet$}
  \put(100,52){$\bullet$}
  \put(100,40){\vector(-1,0){20}}
  \put(100,55){\vector(-1,0){20}}                  
  \end{picture}         
  \end{center}
    \caption{Lace diagrams corresponding to
  $s_{1}+\mathcal{F}^{(1,4)}$ and $s_{2}+
 \mathcal{F}^{(2,5)}$}
  \label{figure:SecondExample}
  \end{figure}
 First, as in Figure~\ref{figure:SecondExample}, we attach the
 linear forms in $s_{1}+\mathcal{F}^{(1,4)}$
 (resp.~$s_{2}+\mathcal{F}^{(2,5)}$) to the arrows in the lace
 diagram corresponding to $\mathcal{O}^{(1, 4)}$
 (resp.~$\mathcal{O}^{(2,5)}$). Here 
 the linear forms in each column are attached upside down 
 according as the arrow is
 leftward or rightward.
 See Figure~\ref{fig:ArrowsAndFactorsII} for the way of
 attaching the factors. 
 As before,
 we superpose two diagrams and if 
 two linear forms are
 attached to the same arrow, those two linear forms are also
 superposed as in
 Figure~\ref{fig:SuperpositionInSecondExample}. 
 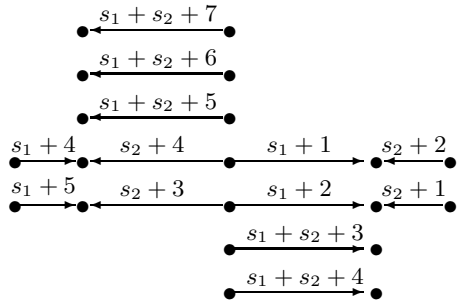
\begin{figure}[htbp]
 \begin{center}
  \begin{picture}(170,125)(0,15)
   \setlength{\unitlength}{1.1pt}
   \put(7,37){$\bullet$}
   \put(7,52){$\bullet$}
   \put(10,40){\vector(1,0){20}}
   \put(10,55){\vector(1,0){20}}
   \put(8,44){{\footnotesize$s_{1}+5$}}
   \put(8,58){{\footnotesize$s_{1}+4$}}   
   \put(30,37){$\bullet$}
   \put(30,52){$\bullet$}
   \put(30,67){$\bullet$}
   \put(30,82){$\bullet$}
   \put(30,97){$\bullet$}
   \put(80,40){\vector(-1,0){45}}            
   \put(80,55){\vector(-1,0){45}}         
   \put(80,70){\vector(-1,0){45}}      
   \put(80,85){\vector(-1,0){45}}   
   \put(80,100){\vector(-1,0){45}}
   \put(45,43){{\footnotesize$s_{2}+3$}}      
   \put(45,58){{\footnotesize$s_{2}+4$}}   
   \put(38,73){{\footnotesize$s_{1}+s_{2}+5$}}
   \put(38,88){{\footnotesize$s_{1}+s_{2}+6$}}
   \put(38,103){{\footnotesize$s_{1}+s_{2}+7$}}         
   \put(80,7){$\bullet$}
   \put(80,22){$\bullet$}
   \put(80,37){$\bullet$}
   \put(80,52){$\bullet$}
   \put(80,67){$\bullet$}
   \put(80,82){$\bullet$}
   \put(80,97){$\bullet$}
   \put(83,10){\vector(1,0){45}}
   \put(83,25){\vector(1,0){45}}
   \put(83,40){\vector(1,0){45}}
   \put(83,55){\vector(1,0){45}}
   \put(88,13){{\footnotesize$s_{1}+s_{2}+4$}}
   \put(88,28){{\footnotesize$s_{1}+s_{2}+3$}}
   \put(95,43){{\footnotesize$s_{1}+2$}}
   \put(95,58){{\footnotesize$s_{1}+1$}}            
   \put(130,7){$\bullet$}
   \put(130,22){$\bullet$}
   \put(130,37){$\bullet$}
   \put(130,52){$\bullet$}
   \put(134,43){{\footnotesize$s_{2}+1$}}
   \put(134,58){{\footnotesize$s_{2}+2$}}         
  \put(155,37){$\bullet$}
  \put(155,52){$\bullet$}
  \put(155,40){\vector(-1,0){20}}
  \put(155,55){\vector(-1,0){20}}                     
  \end{picture}
  \end{center}
 \caption{Superposition of the lace diagrams in
 Example~\ref{exmp:SecondExampleV}}
  \label{fig:SuperpositionInSecondExample}
 \end{figure}
\end{example}
In the following sections, we investigate these two examples in
more detail, while
the validity of the algorithm will be proved in general.

\section{Calculation of the $a$-function}
\label{section:Afunctions}

In this section, we discuss the method to calculate the $a$-functions.
In general, if we have the explicit form of the $a$-function
of a prehomogeneous vector space, we can determine the
$b$-function of the space to a certain extent, by using the
structure theorems on $a$-functions and $b$-functions of
several variables. We refer to M.\ Sato~\cite{EnAyumi} for the
details of the structure theorems. A convenient summary of
\cite{EnAyumi} is given in Ukai~\cite[\S\S 1.3]{Dynkin}.

We begin by recalling a general definition of $a$-functions. We keep
the notation of Section~\ref{section:preliminaries}.
Let $(G, \rho, V)$ be a reductive prehomogeneous vector space, 
$f$ a relative invariant, and
$\Omega(f) = V\setminus f^{-1}(0)$.
Then we define the map $\grad\log f:\Omega(f)\to V^{*}$ by
\begin{equation}
 \label{form:GradLogF}
 \grad\log f (v) = \sum_{i=1}^{n} \frac{1}{f(v)} \cdot
 \frac{\partial f}{\partial v_{i}}(v) \cdot e_{i}^{*}=
 \left(\frac{1}{f(v)}\frac{\partial f}{\partial v_{1}}(v),\dots,
  \frac{1}{f(v)}\frac{\partial f}{\partial v_{n}}(v)
 \right).
\end{equation}
Let $f_{1},\dots, f_{l}$ be the fundamental relative invariants
of $(G, \rho, V)$ and $f_{1}^{*},\dots, f_{l}^{*}$ the
fundamental relative invariants of $(G, \rho^{*}, V^{*})$
such that the characters of $f_{i}$ and $f_{i}^{*}$ are the inverse
of each other. We define $\underline{f}$, $\underline{f}^{*}$,
$\underline{f}^{\underline{s}}$, $\underline{f}^{*\underline{s}}$
in the same manner as in Section~\ref{section:preliminaries}.
\begin{lemma}
 \label{lemma:Afunctions}
 For any $l$-tuple $\underline{m} =(m_{1},\dots, m_{l})\in \Z_{\geq
 0}^{l}$, there exists a non-zero homogeneous polynomial
 $a_{\underline{m}}(\underline{s})$ of $s_{1},\dots, s_{l}$
 satisfying
 \[
  \underline{f}^{\underline{m}}(v)
 \underline{f}^{*\underline{m}}\left(\grad\log
 \underline{f}^{\underline{s}} (v)\right) = a_{\underline{m}}
 (\underline{s}).
 \]
 Here $a_{\underline{m}}(\underline{s})$ is independent of $v$.
\end{lemma}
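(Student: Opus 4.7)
The plan is to follow the standard template for $a$-functions of reductive prehomogeneous vector spaces: establish that the left-hand side is a rational function of $v$ and polynomial in $\underline{s}$, show it is $G$-invariant in $v$ using the equivariance of $\grad\log f_i$, conclude via prehomogeneity that it is independent of $v$, and finally handle non-vanishing via the known link with the $b$-function.

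First I would compute $\grad\log \underline{f}^{\underline{s}}(v) = \sum_{i=1}^{l} s_i\,\grad\log f_i(v)$, which is linear in $\underline{s}$ with coefficients rational in $v$ (denominators are products of powers of the $f_i$). Since each $f_i^{*}$ is a homogeneous polynomial of degree $d_i := \deg f_i^{*}$, substituting this vector into $\underline{f}^{*\underline{m}}$ produces a polynomial in $\underline{s}$, homogeneous of degree $\sum_i m_i d_i$, whose coefficients are rational functions of $v$. Call
\[
\Phi(v,\underline{s}) := \underline{f}^{\underline{m}}(v)\, \underline{f}^{*\underline{m}}\!\left(\grad\log \underline{f}^{\underline{s}}(v)\right);
\]
this is a polynomial in $\underline{s}$ with rational coefficients in $v$, homogeneous of degree $\sum_i m_i d_i$ in $\underline{s}$.

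The central step is $G$-invariance in $v$. From $f_i(\rho(g)v)=\chi_i(g)f_i(v)$ and the chain rule, one gets the key equivariance
\[
(\grad\log f_i)(\rho(g)v) = \rho^{*}(g)\,\grad\log f_i(v),
\]
so $\grad\log\underline{f}^{\underline{s}}(\rho(g)v) = \rho^{*}(g)\,\grad\log\underline{f}^{\underline{s}}(v)$. Combined with $f_i^{*}(\rho^{*}(g)w)=\chi_i(g)^{-1}f_i^{*}(w)$, applying $\underline{f}^{*\underline{m}}$ contributes a factor $\prod_i\chi_i(g)^{-m_i}$, and this cancels the factor $\prod_i\chi_i(g)^{m_i}$ coming from $\underline{f}^{\underline{m}}(\rho(g)v)$. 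Hence $\Phi(\rho(g)v,\underline{s}) = \Phi(v,\underline{s})$ for all $g\in G$. Because $(G,\rho,V)$ has an open dense orbit, any $G$-invariant rational function on $V$ is constant, so $\Phi(v,\underline{s}) = a_{\underline{m}}(\underline{s})$ is independent of $v$. Homogeneity in $\underline{s}$ is automatic from step one.

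The main obstacle is non-vanishing of $a_{\underline{m}}(\underline{s})$. My approach is to relate $a_{\underline{m}}$ to the top-degree part of $b_{\underline{m}}$. Expanding
\[
\underline{f}^{*\underline{m}}(\grad_v)\,\underline{f}^{\underline{s}+\underline{m}}(v) = b_{\underline{m}}(\underline{s})\,\underline{f}^{\underline{s}}(v)
\]
by Leibniz, the contribution in which every derivative in $\underline{f}^{*\underline{m}}(\grad_v)$ hits a factor $f_i^{s_i+m_i}$ (rather than a factor of $\underline{f}^{\underline{m}}$) is precisely $a_{\underline{m}}(\underline{s})\,\underline{f}^{\underline{s}}(v)$ up to reshuffling, and it is the unique term of maximal total degree $\sum_i m_i d_i$ in $\underline{s}$. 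Therefore $a_{\underline{m}}(\underline{s})$ equals the homogeneous top-degree part of $b_{\underline{m}}(\underline{s})$, which is non-zero by Lemma~\ref{prop:DefOfBfunSeverVari} (the polynomial $b_{\underline{m}}$ itself being non-zero). This yields $a_{\underline{m}}(\underline{s})\neq 0$ and completes the argument.
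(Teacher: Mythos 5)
Your construction of $\Phi(v,\underline{s})$, the equivariance $(\grad\log f_{i})(\rho(g)v)=\rho^{*}(g)\,\grad\log f_{i}(v)$, the cancellation of the characters $\prod_{i}\chi_{i}(g)^{\pm m_{i}}$, and the conclusion via constancy of invariant rational functions on a space with a dense orbit are all correct; this is the standard argument from M.~Sato's theory, which is in fact what the paper relies on here (the lemma is stated without proof, recalled from \cite{EnAyumi}; see also Ukai~\cite{Dynkin}). Homogeneity of degree $D:=\sum_{i}m_{i}\deg f_{i}$ in $\underline{s}$ also comes out correctly from the linearity of $\grad\log\underline{f}^{\underline{s}}$ in $\underline{s}$.

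The gap is in the non-vanishing step, and it is a genuine one. What your Leibniz expansion actually shows is $b_{\underline{m}}(\underline{s})=a_{\underline{m}}(\underline{s}+\underline{m})+(\text{terms of total degree}<D)$, i.e.\ $a_{\underline{m}}$ is the \emph{degree-$D$ homogeneous component} of $b_{\underline{m}}$. Calling this "the top-degree part" and invoking Lemma~\ref{prop:DefOfBfunSeverVari} is circular: that lemma only asserts $b_{\underline{m}}\neq 0$ and gives no control on its degree, so if $a_{\underline{m}}$ were identically zero you would merely conclude $\deg b_{\underline{m}}<D$, which contradicts nothing. (Your phrase "it is the unique term of maximal total degree $\sum_{i}m_{i}d_{i}$" already presupposes that this term is non-zero, which is exactly the point at issue.) The repair is cheap: since $\underline{f}^{*\underline{m}}=\prod_{i}(f_{i}^{*})^{m_{i}}$, the $a$-function is multiplicative, $a_{\underline{m}}(\underline{s})=\prod_{i}a_{\varepsilon_{i}}(\underline{s})^{m_{i}}$ with $a_{\varepsilon_{i}}(\underline{s})=f_{i}(v)\,f_{i}^{*}\bigl(\grad\log\underline{f}^{\underline{s}}(v)\bigr)$, each factor being constant in $v$ by your invariance argument. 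Specializing $\underline{s}=\varepsilon_{i}s$ gives $a_{\varepsilon_{i}}(\varepsilon_{i}s)=s^{d_{i}}\,f_{i}(v)f_{i}^{*}(\grad\log f_{i}(v))$, and by your own Leibniz computation in one variable the constant $f_{i}(v)f_{i}^{*}(\grad\log f_{i}(v))$ is the leading coefficient $b_{0}$ of $b_{f_{i}}(s)$, which Lemma~\ref{prop:b-funOf1var} — unlike Lemma~\ref{prop:DefOfBfunSeverVari} — explicitly asserts to be non-zero (it fixes $\deg b_{f_{i}}=d_{i}$ with $b_{0}\neq 0$). Hence each $a_{\varepsilon_{i}}\neq 0$, and $a_{\underline{m}}\neq 0$ as a product of non-zero elements of the integral domain $\C[s_{1},\dots,s_{l}]$; with this substitution your proof is complete.
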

We call $a_{\underline{m}}(\underline{s})$ the
{\it $a$-function} of $\underline{f}$. The calculation of
$a_{\underline{m}}(\underline{s})$ can be reduced to that of
$\grad\log \underline{f}^{\underline{s}}(v)$, and we have
\begin{equation}
 \label{form:SumOfGradLogF}
 \grad\log \underline{f}^{\underline{s}}(v) =
\sum_{i=1}^{l} s_{i} \cdot \grad\log f_{i}(v).
\end{equation}
Moreover, since
$a_{\underline{m}}(\underline{s})$ is independent of
$v$, it is enough to calculate 
$\grad\log f_{i}(v_{0})\;
(i=1,\dots,l)$ for a suitable generic point $v_{0}$. 

\begin{example}
 \label{exmp:GradLogFEquiorientedCase}
 Before proceeding to the general case, we consider the case
 where $Q$ is an equioriented quiver of type $A_{r}$ as below.
 \[
  Q: \vt{1}\longrightarrow \vt{2}\longrightarrow \vt{3}
 \longrightarrow \cdots \longrightarrow \vt{r}
 \]
 We keep the notation of Example~\ref{exmp:EquiorientedCase}.
 At first, we give explicitly a generic point $v_{0}$ of
 $(GL(\underline{n}), \Rep(Q,\underline{n}))$. For
 $m,n\in\Z_{>0}$, let
 \[
  E(m,n) =
 \left\{
 \begin{array}{ccl}
  \left(E_{m}\, |\, O_{m,n-m}\right) & & \text{if}  \quad m< n \\
  E_{m} & & \text{if}  \quad m = n \\
 {}^{t}\left(E_{n}\, |\, O_{n,m-n}\right) & & \text{if}  \quad m> n \\
 \end{array}.
 \right.
 \]
 Further, for $h\in \Z$ with $0\leq h\leq \min\{m,n\}$, let
 \[
  E(m,n ; h) =
 \left\{
 \begin{array}{ccl}
  \left(
   \begin{array}{c|c}
    E_{h}& O_{h,n-h} \\ \hline
    O_{m-h,h} & O_{m-h,n-h}
   \end{array}
   \right)
  & & \text{if}  \quad h< \min\{m,n\} \\[3pt]
  E(m,n) & & \text{if}  \quad h= \min\{m,n\} \\
 \end{array}.
 \right.
 \]
 Then a direct computation shows that
 \[
  A_{0} =
 \left(E(n_{2},n_{1}),\, E(n_{3},n_{2}),\dots,\,
 E(n_{r},n_{r-1})
 \right)\in \Rep(Q,\underline{n})
 \]
 is a generic point of $(GL(\underline{n}), \Rep(Q,\underline{n}))$.
 (see also Lemma~\ref{lemma:AbeasisGenericPoint} below.)
 Recall that we have assumed that
 $n_{1}=n_{r}< n_{2},\dots, n_{r-1}$, and then 
 \[
  f(v) = \det (X_{r,1}) = \det (X_{r,r-1}\cdots X_{3,2}X_{2,1})
 \]
 is a relative invariant of $(GL(\underline{n}),
 \Rep(Q,\underline{n}))$. We will show that
 \begin{equation}
  \label{form:GradLogFinEquioriented}
  \grad \log f(A_{0}) =
 \left(E(n_{2},n_{1}), \, E(n_{3},n_{2}; n_{1}),\dots,
 \, E(n_{r-1},n_{r-2};n_{1}), \, E(n_{r},n_{r-1})\right).
 \end{equation}
 In general, we have
 $E(m,n)\cdot E(n,l) = E(m,l\,;\,\min\{m,n,l\})$ and thus
 \begin{equation}
  \label{form:ValueOfFv0}
  f(A_{0})= \det \left(E_{n_{1}}\right) = 1.
 \end{equation}
 For $1\leq i\leq n_{k}$ and $1\leq j\leq n_{k-1}$, we denote by
 $x_{ij}^{(k,k-1)}$ the $(i,j)$-th component of $X_{k,k-1}$.
 In view of the definition~\eqref{form:GradLogF},
 it is enough to calculate
 \[
  \frac{\partial f}{\partial x_{ij}^{(k,k-1)}}(A_{0}) \qquad
 (k=2,\dots, r\,;\, i=1,\dots,n_{k}, j=1,\dots, n_{k-1}).
 \]
 For $1\leq s, t\leq n_{1}$, we denote by $y_{st}$ the
 $(s,t)$-th component of $X_{r,1} = X_{r,r-1}\cdots X_{2,1}$.
 Then it follows from the chain rule that
 \begin{equation}
  \label{form:ChainRule}
  \frac{\partial f}{\partial x_{ij}^{(k,k-1)}}(A_{0}) =
 \sum_{1\leq s, t\leq n_{1}} \frac{\partial f}{\partial y_{st}}(A_{0})
 \cdot \frac{\partial y_{st}}{\partial x_{ij}^{(k,k-1)}}(A_{0}).
 \end{equation}
 For a square matrix $R$, we denote by $\Delta_{st}(R)$ the
 $(s,t)$-cofactor of $R$. By definition of the determinant,
 we have
 ${\partial f}/{\partial y_{st}} = \Delta_{st}(X_{r,1})$
 and thus
 \[
  \frac{\partial f}{\partial y_{st}}(A_{0}) =
 \Delta_{st}(X_{r,1})\big|_{v=A_{0}} =
 \Delta_{st}\left(X_{r,1}\big|_{v=A_{0}}\right)=
 \Delta_{st}(E_{n_{1}}) = \delta_{st}
 \]
 where $\delta_{st}$ is the Kronecker-delta symbol.
 On the other hand, 
 \[
  y_{st} = \sum_{\alpha = 1}^{n_{k}}
 \sum_{\beta = 1}^{n_{k-1}} z_{s\alpha}^{(r,k)}\cdot 
 x_{\alpha\beta}^{(k,k-1)} \cdot z_{\beta t}^{(k-1,1)},
 \]
 where $z_{s\alpha}^{(r,k)}$ is the $(s,\alpha)$-component of
 $X_{r,k}=X_{r,r-1}\cdots X_{k+1,k}$ and $z_{\beta t}^{(k-1,1)}$ is
 the $(\beta, t)$-component of $X_{k-1,1} =X_{k-1,k-2}\cdots X_{2,1}$.
 Thus we have
 \[
  \frac{\partial y_{st}}{\partial x_{ij}^{(k,k-1)}} =
 z_{si}^{(r,k)} \cdot z_{jt}^{(k-1,1)}
 \]
 and hence
 \[
  \frac{\partial y_{st}}{\partial x_{ij}^{(k,k-1)}}(A_{0}) =
 z_{si}^{(r,k)}\big|_{v=A_{0}} \cdot z_{jt}^{(k-1,1)}\big|_{v=A_{0}}.
 \]
 Since $X_{rk}\big|_{v=A_{0}} = E(n_{1},n_{k})$ and
 $X_{k-1,1}\big|_{v=A_{0}} = E(n_{k-1},n_{1})$, we see that
 \[
   z_{si}^{(r,k)}\big|_{v=A_{0}} =
 \left\{
 \begin{array}{cl}
  \delta_{si}& (1\leq i\leq n_{1})\\
  0 &  (n_{1}+1\leq i\leq n_{k})
 \end{array},
 \right. \qquad
 z_{jt}^{(k-1,1)}\big|_{v=A_{0}}=
 \left\{
 \begin{array}{cl}
  \delta_{jt}& (1\leq j\leq n_{1})\\
  0 &  (n_{1}+1\leq i\leq n_{k-1})
 \end{array}.
 \right.
 \]
 As a result, we have
 \begin{align*}
  \frac{\partial f}{\partial x_{ij}^{(k,k-1)}}(A_{0}) &=
 \sum_{1\leq s, t\leq n_{1}} \delta_{st}
 \cdot \frac{\partial y_{st}}{\partial x_{ij}^{(k,k-1)}}(A_{0}) 
  = \sum_{s=1}^{n_{1}}
  \frac{\partial y_{ss}}{\partial x_{ij}^{(k,k-1)}}(A_{0}) \\
  &= \sum_{s=1}^{n_{1}}
   z_{si}^{(r,k)}\big|_{v=A_{0}} \cdot
   z_{js}^{(k-1,1)}\big|_{v=A_{0}} \\
  &=
  \left\{
  \begin{array}{ccl}
  \sum_{s=1}^{n_{1}}
   \delta_{si}\cdot \delta_{js}= \delta_{ij} & &
    (1\leq i, j\leq n_{1}) \\
   0 & & (\text{otherwise})\\    
  \end{array}.
  \right.
 \end{align*}
 Together with \eqref{form:ValueOfFv0}, this proves
 \eqref{form:GradLogFinEquioriented}.
\end{example}

In the case of arbitrary orientation,
we use lace diagrams to give
explicit descriptions of generic points of
$(GL(\underline{n}), \Rep(Q,\underline{n}))$.
For a given orientation $Q$ and a given dimension vector
$\underline{n}$, we consider
the lace diagram obtained by simply drawing all possible
horizontal lines between dots of consecutive columns.
We call it the {\it complete lace diagram} for $(GL(\underline{n}),
\Rep(Q,\underline{n}))$.

\begin{example}
 Let us consider $(GL(\underline{n}), \Rep(Q,\underline{n}))$
 discussed in Examples~\ref{exmp:FirstExampleIV}
 and~\ref{exmp:SecondExampleIV}.
 Then the complete lace diagrams are given as in 
 Figure~\ref{figure:CompleteLaceDiagrams}.
 \begin{figure}[htbp]
  \begin{center}
  \begin{picture}(100,90)(0,10)
   \put(-60,80){In the case of}
   \put(-60,65){Example~\ref{exmp:FirstExampleIV}}
   \put(7,7){$\bullet$}
   \put(12, 10){\vector(1,0){20}}
   \put(7,22){$\bullet$}
   \put(12, 25){\vector(1,0){20}}   
   \put(30,7){$\bullet$}
   \put(35,10){\vector(1,0){20}}      
   \put(30,22){$\bullet$}
   \put(35,25){\vector(1,0){20}}         
   \put(30,37){$\bullet$}
   \put(35,40){\vector(1,0){20}}            
   \put(30,52){$\bullet$}
   \put(35,55){\vector(1,0){20}}            
   \put(30,67){$\bullet$}
   \put(35,70){\vector(1,0){20}}            
   \put(53,7){$\bullet$}
   \put(53,22){$\bullet$}
   \put(53,37){$\bullet$}
   \put(53,52){$\bullet$}
   \put(53,67){$\bullet$}
   \put(53,82){$\bullet$}
   \put(58,10){\vector(1,0){20}}
   \put(58,25){\vector(1,0){20}}
   \put(58,40){\vector(1,0){20}}
   \put(58,55){\vector(1,0){20}}
   \put(58,70){\vector(1,0){20}}
   \put(58,85){\vector(1,0){20}}            
   \put(76,7){$\bullet$}
   \put(76,22){$\bullet$}
   \put(76,37){$\bullet$}
   \put(76,52){$\bullet$}
   \put(76,67){$\bullet$}
   \put(76,82){$\bullet$}
   \put(81,10){\vector(1,0){20}}
   \put(81,25){\vector(1,0){20}}   
   \put(100,7){$\bullet$}
   \put(100,22){$\bullet$}   
  \end{picture}
   \qquad\qquad\qquad\qquad
  \begin{picture}(100,105)(0,10)
   \put(-60, 90){In the case of }
   \put(-60,75){Example~\ref{exmp:SecondExampleIV}}
   \put(7,37){$\bullet$}
   \put(7,52){$\bullet$}
   \put(10,40){\vector(1,0){20}}
   \put(10,55){\vector(1,0){20}}   
   \put(30,37){$\bullet$}
   \put(30,52){$\bullet$}
   \put(30,67){$\bullet$}
   \put(30,82){$\bullet$}
   \put(30,97){$\bullet$}
   \put(53,40){\vector(-1,0){20}}            
   \put(53,55){\vector(-1,0){20}}         
   \put(53,70){\vector(-1,0){20}}      
   \put(53,85){\vector(-1,0){20}}   
   \put(53,100){\vector(-1,0){20}}
   \put(53,7){$\bullet$}
   \put(53,22){$\bullet$}
   \put(53,37){$\bullet$}
   \put(53,52){$\bullet$}
   \put(53,67){$\bullet$}
   \put(53,82){$\bullet$}
   \put(53,97){$\bullet$}
   \put(56,10){\vector(1,0){20}}
   \put(56,25){\vector(1,0){20}}
   \put(56,40){\vector(1,0){20}}
   \put(56,55){\vector(1,0){20}}            
   \put(76,7){$\bullet$}
   \put(76,22){$\bullet$}
   \put(76,37){$\bullet$}
   \put(76,52){$\bullet$}
  \put(100,37){$\bullet$}
  \put(100,52){$\bullet$}
  \put(100,40){\vector(-1,0){20}}
  \put(100,55){\vector(-1,0){20}}         
  \end{picture}
\end{center}
  \caption{Complete lace diagrams}
  \label{figure:CompleteLaceDiagrams}
\end{figure}
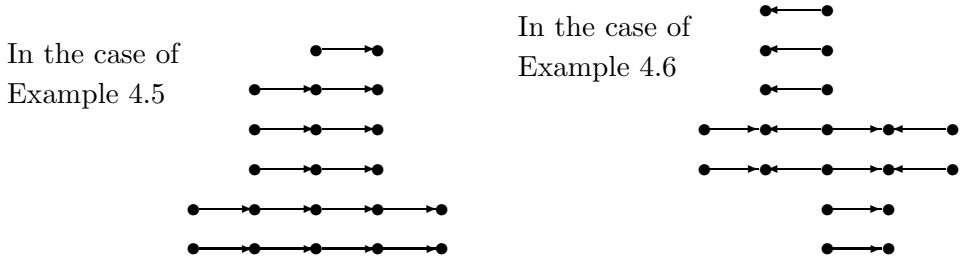
\end{example} 
Then we have the following lemma. 
\begin{lemma}[Abeasis~{\cite[Proposition~3.1]{Abeasis}}]
 \label{lemma:AbeasisGenericPoint}
 The element $A_{0}$ of $\Rep(Q,\underline{n})$
 represented by the complete lace diagram is
 a generic point of $(GL(\underline{n}), \Rep(Q,\underline{n}))$. 
\end{lemma}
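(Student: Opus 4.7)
The plan is to identify the indecomposable decomposition of $A_{0}$ directly from the complete lace diagram and then invoke Lemma~5.2 to verify that the rank parameter $N^{A_{0}}$ is componentwise maximal, so that $A_{0}$ lies in the unique open $GL(\underline{n})$-orbit.

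First, I would make $A_{0}$ explicit: for each arrow $a \in Q_{1}$, the component $(A_{0})_{h(a),t(a)}$ is the matrix $E(n_{h(a)}, n_{t(a)})$ dictated by the top/bottom-alignment rule of Definition~5.3 (with the role of top/bottom determined by $\delta(a)$). Following the connected laces of the complete diagram, each maximal chain from column $i$ to column $j$ contributes one copy of the indecomposable $I_{ij}$, yielding
\[
  A_{0} \;\cong\; \bigoplus_{1\leq i\leq j\leq r} m_{ij}^{0}\, I_{ij}
\]
with multiplicities $m_{ij}^{0}$ computable in closed form from $\underline{n}$ and the orientation.

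Second, I would show that $N^{A_{0}}$ is maximal. Because every block of $Y_{(i,j)}(A_{0})$ is an identity-type matrix $E(\cdot,\cdot)$, its rank equals the number of laces of $A_{0}$ that enter the subquiver $Q^{(i,j)}$ at a source of $Q^{(i,j)}$ and exit at a sink of $Q^{(i,j)}$. Conversely, for an arbitrary $B \in \Rep(Q,\underline{n})$, decomposing $B$ into indecomposables and computing the contribution of each summand $I_{kl}$ to $Y_{(i,j)}(B)$ shows that only those $I_{kl}$ spanning the full subquiver $Q^{(i,j)}$ contribute, and their combined contribution is bounded by the same combinatorial quantity. Hence $N_{ij}^{A_{0}} \geq N_{ij}^{B}$ for all $B$ and all pairs $(i,j)$, so by Lemma~5.2 the orbit $\mathcal{O}_{A_{0}}$ is the unique open orbit and $A_{0}$ is a generic point.

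The main obstacle I anticipate is the combinatorial bookkeeping in the second step: when the orientation of $Q$ changes many times, the precise correspondence between laces of $A_{0}$, sources/sinks of $Q^{(i,j)}$, and blocks of $Y_{(i,j)}$ requires careful tracking of the alignment conventions and of the signs in $\overline{n}_{\nu(\alpha+\kappa)}$. A cleaner alternative, which is essentially the route in Abeasis' original paper, is to compute $\dim \mathrm{End}_{Q}(A_{0})$ from the multiplicities $m_{ij}^{0}$ via the known formula for $\dim \Hom_{Q}(I_{ij}, I_{i'j'})$ in type $A$, and to verify that $\dim GL(\underline{n}) - \dim \mathrm{End}_{Q}(A_{0}) = \dim \Rep(Q,\underline{n})$; since $\Rep(Q,\underline{n})$ has finitely many orbits, this equality forces $A_{0}$ to be generic without any rank-bound argument.
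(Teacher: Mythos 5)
The paper never proves this lemma: it is imported wholesale from Abeasis \cite[Proposition~3.1]{Abeasis}, so there is no internal proof to compare yours against, and your attempt must be judged on its own terms. Your \emph{second} route (the one you call the alternative) is sound and is essentially the standard argument. Since the stabilizer of $A_{0}$ in $GL(\underline{n})$ is the group of units of $\mathrm{End}_{Q}(A_{0})$, hence open in it, the orbit of $A_{0}$ is open if and only if $\dim GL(\underline{n})-\dim \mathrm{End}_{Q}(A_{0})=\dim \Rep(Q,\underline{n})$, which by Ringel's formula \eqref{form:Ringel} is equivalent to $\Ext_{Q}(A_{0},A_{0})=0$; note that openness in the irreducible space $\Rep(Q,\underline{n})$ already gives density, so your appeal to the finiteness of the orbit set is superfluous. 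What you leave implicit, and should spell out, is why completeness of the diagram forces this $\Ext$-vanishing: a nonsplit extension between two laces $I_{ij}$ and $I_{kl}$ requires an arrow $a$ across which one lace has a free terminal dot at column $t(a)$ while the other has a free initial dot at column $h(a)$, and in the \emph{complete} diagram two such free dots in consecutive columns would already have been joined by a segment. With that verification (plus the routine $\dim\Hom_{Q}(I_{ij},I_{i'j'})$ bookkeeping in type $A$), this route gives a complete, self-contained proof.

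Your \emph{first} route, however, contains a false step. It is not true that only summands $I_{kl}$ spanning all of $Q^{(i,j)}$ contribute to $\rank Y_{(i,j)}$: take $Q^{(i,j)}:\;\vt{1}\longleftarrow \vt{2}\longrightarrow \vt{3}$, where $Y_{(1,3)}(B)\colon L_{2}\to L_{1}\oplus L_{3}$ sends $z\mapsto (B_{1,2}z,\,B_{3,2}z)$; the summand $I_{12}$, supported only on $[1,2]$, already contributes rank one. (This even contradicts your own preceding sentence, under which $I_{12}$ does enter $Q^{(1,3)}$ at a source and exit at a sink; also a single lace such as $I_{14}$ on $\vt{1}\rightarrow\vt{2}\leftarrow\vt{3}\rightarrow\vt{4}$ contributes rank two, not one.) The maximality statement you are after is true --- the dense orbit maximizes every $N_{ij}$ by lower semicontinuity of rank, so it would suffice to check that $N^{A_{0}}$ attains those generic values, and then the rank-parameter lemma of Abeasis--Del Fra finishes the argument --- but your proposed bound does not establish it, because partial laces do contribute and the correct count is $\sum_{\tau}\dim L_{\tau}$ minus the multiplicity of summands of $B|_{Q^{(i,j)}}$ killed by $Y_{(i,j)}$. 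As written, the combinatorial core of that route fails; promote the dimension-count argument to the main proof.
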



The matrix representation of $A_{0}$ depends on the choice of
the basis of $\Rep(Q, \underline{n})$.
However, the lemma below says that 
without fixing the basis, it is possible to calculate
the value of $\grad\log f(A_{0})$ from the complete lace diagram.

\begin{lemma}
 \label{lemma:GradLogF}
 Let $A_{0}$ be a generic point of
 $(GL(\underline{n}), \Rep(Q,\underline{n}))$
 given by the complete lace diagram, and
 $f_{(p, q)}(v)$ the irreducible relative invariant given as in
 Section~\ref{section:RelativeInvariants}. 
 Then the value of $\grad\log f_{(p,q)}(A_{0})$ is given by
 the exact lace diagram corresponding to the locally closed orbits
 $\mathcal{O}^{(p,q)}$. 
\end{lemma}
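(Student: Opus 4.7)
The plan is to generalize the chain-rule computation carried out in Example~\ref{exmp:GradLogFEquiorientedCase} to a quiver $Q$ of arbitrary orientation, exploiting the identity $f_{(p,q)}(v) = \det Y_{(p,q)}(v)$ together with cofactor expansions. First, I fix bases on each $L_i$ so that the blocks of $A_0$ take the canonical $E(m,n)$-form used in Example~\ref{exmp:GradLogFEquiorientedCase}, with the obvious analogue (top-aligned rather than bottom-aligned) for leftward arrows; this is precisely the choice of coordinates encoded by the complete lace diagram. In these coordinates the block matrix $Y_{(p,q)}(A_0)$ can be evaluated directly and, crucially, reduces to a matrix conjugate to an identity matrix of size $n_q$, so that $f_{(p,q)}(A_0)=1$.

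Next, for each arrow $a \in Q_1$ with $p \le t(a), h(a) \le q$ and each entry $x_{ij}^{(a)}$ of $X_{h(a),t(a)}$, I would expand
\[
 \frac{\partial f_{(p,q)}}{\partial x_{ij}^{(a)}}(A_0)
 = \sum_{s,t} \Delta_{st}\bigl(Y_{(p,q)}(A_0)\bigr)\cdot
 \frac{\partial (Y_{(p,q)})_{st}}{\partial x_{ij}^{(a)}}(A_0),
\]
where $\Delta_{st}$ denotes the $(s,t)$-cofactor. Since $Y_{(p,q)}(A_0)$ is essentially the identity, only diagonal cofactors survive, reducing the sum to one over the indices $s=t$ lying in the distinguished $n_q\times n_q$ block. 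The inner derivative $\partial (Y_{(p,q)})_{st}/\partial x_{ij}^{(a)}$ is a product of entries of consecutive $X$-blocks along the path through vertex $t(a)$ or $h(a)$; at $A_0$ each factor collapses to a Kronecker delta, so a given pair $(i,j)$ contributes a non-zero value (namely $1$) if and only if the indices lie in the range dictated by the rank parameters $N_{ij}^{(p,q)}$ computed in Section~\ref{section:RepThOfQuiver}.

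The final step is to match this support condition with the arrow pattern of the exact lace diagram of $\mathcal{O}^{(p,q)}$. This is done by induction on the sink/source indices $\nu(\alpha),\dots,\nu(\beta)$: at each step, the alternating recursion
\[
 \overline{n}_{\nu(\alpha+\kappa)} = n_{\nu(\alpha+\kappa)} - n_{\nu(\alpha+\kappa-1)} + \cdots + (-1)^{\kappa+1}n_p
\]
which governs both the block-size of $Y_{(p,q)}$ and the number of active arrows in the exact lace diagram (by the construction rules in Definition~\ref{def:LaceDiagrams}). The two conditions coincide termwise, which gives the lemma.

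The main obstacle will be the case analysis that distinguishes whether each interior vertex of $Q^{(p,q)}$ is a source or a sink, since the alignment conventions and hence the surviving index ranges in the chain rule differ in the two cases. Once this bookkeeping is set up in parallel with the two reduction formulas~\eqref{form:ReductionI} and~\eqref{form:ReductionII}, the compatibility with the exact lace diagram follows by the same induction already used in the proof of Lemma~\ref{lem:BfunAndRankArrays}, making the statement essentially a ``geometric'' refinement of Theorem~\ref{thm:Bfun}.
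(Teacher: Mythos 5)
Your overall strategy---evaluate $Y_{(p,q)}(A_0)$, exploit $f_{(p,q)}=\det Y_{(p,q)}$ via the chain rule through cofactors, then match the surviving partial derivatives with the arrows of the exact lace diagram---is the same as the paper's, but there is a genuine gap at the central step. You claim that $Y_{(p,q)}(A_0)$ is ``conjugate to an identity matrix'' (of size $n_q$; in any case the size should be $N=\sum_{\tau}\dim L_{\tau}$, summed over the sources of $Q^{(p,q)}$) and hence that ``only diagonal cofactors survive.'' For a non-equioriented quiver this is false. With bases chosen as in the paper (ordered so that each $\bs{u}_{s}$ is joined to $\bs{u}_{s}'$ by a path of the complete lace diagram), one gets $Y_{(p,q)}(A_{0}) = E_{N} + \sum_{(s,t)\in\mathcal{B}} E_{st}$ with $\mathcal{B}$ nonempty in general---for instance $Y_{(1,4)}(A_{0}) = E_{9}+E_{16}+E_{27}$ in the setting of Example~\ref{exmp:SecondExampleIV}---and a matrix conjugate to the identity \emph{is} the identity, so this unipotent matrix is not of the form you assert. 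Consequently the cofactor matrix ${}^{t}\Delta\left(Y_{(p,q)}(A_{0})\right) = E_{N} - \sum_{(t,s)\in\mathcal{B}} E_{st}$ has genuinely nonzero off-diagonal entries equal to $-1$, and your reduction of the chain-rule sum to the diagonal terms does not follow from ``$Y_{(p,q)}(A_0)$ is essentially the identity.''

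The paper closes exactly this hole with the combinatorial property \eqref{form:ConditionOnBeta} of the index set $\mathcal{B}$, which yields the three formulas \eqref{form:Ypq1}--\eqref{form:Ypq3} and, crucially, forces the entries $y_{st}$ of $Y_{(p,q)}(v)$ at the positions $(t,s)\in\mathcal{B}$ carrying the off-diagonal cofactors to vanish identically (they lie in the zero blocks of $Y_{(p,q)}$), so those terms drop out of the chain rule for a structural reason, not because the cofactors vanish. Your argument needs this observation, or some substitute for it, before passing to the computation of $\partial y_{ss}/\partial x_{ij}^{(a)}(A_{0})$. Separately, your final matching step (induction over $\nu(\alpha),\dots,\nu(\beta)$ via the alternating quantities $\overline{n}_{\nu(\alpha+\kappa)}$) is both heavier than necessary and not actually carried out: once only diagonal terms survive, the paper identifies $\partial y_{ss}/\partial x_{ij}^{(a)}(A_{0})$ as $1$ precisely when the arrow $e_{j}^{(t(a))}\to e_{i}^{(h(a))}$ lies on the path joining $\bs{u}_{s}$ to $\bs{u}_{s}'$ in the complete lace diagram, which exhibits the value of $\grad\log f_{(p,q)}(A_{0})$ as the exact lace diagram directly, with no further sink/source case analysis.
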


\begin{remark}
 \label{remark:AdditionSuperposition}
   Do {\it not} interpret the lemma above as a result on
	the indecomposable decompositions of $\grad\log
	f_{(p,q)}(A_{0})$. If we give a matrix representation of
	$A_{0}$, then it automatically determines the
	matrix representation of 
	$\grad\log f_{(p,q)}(A_{0})$.
	This enables us to translate the addition in
	\eqref{form:SumOfGradLogF} as the superposition operation
	on lace diagrams.
 We also note that the fact $\grad\log f_{(p,q)}(A_{0})\in
 \mathcal{O}^{(p,q)}$ follows from a previously known result of 
 Gyoja~\cite[Theorem~1.18]{Nonreg}.
\end{remark}

\begin{proof}[Proof of Lemma~\ref{lemma:GradLogF}]
 Let
 \[
 Y_{(p,q)}(A_{0}):\bigoplus_{\tau} L_{\tau}
 \longrightarrow 
 \bigoplus_{\sigma} L_{\sigma} \qquad\;
 \left(\begin{array}{l}
   \text{$\tau$ runs over all the sources of $Q^{(p,q)}$}\\
   \text{$\sigma$ runs over all the sinks of $Q^{(p,q)}$}
 \end{array}\right) 
\]
 be the linear map defined in Section~\ref{section:RepThOfQuiver}.
 Since $f_{(p,q)}(A_{0}) = \det Y_{(p,q)}(A_{0})\neq 0$,
 $Y_{(p,q)}(A_{0})$
 is an isomorphism. Let $N := \sum_{\tau}\dim L_{\tau} =
 \sum_{\tau} \dim L_{\tau}$. Now we choose basis
 $\{\bs{u}_{1},\dots, \bs{u}_{N}\}$
 (resp. $\{\bs{u}_{1}',\dots, \bs{u}_{N}'\}$) of
 $\bigoplus_{\tau} L_{\tau}$ (resp.
 $\bigoplus_{\sigma} L_{\sigma}$). Each 
 $\bs{u}_{i} \; (i=1,\dots, N)$ (resp.  $\bs{u}_{i}'\; (i=1,\dots, N)$)
 can be identified with a dot at
 some source (resp. sink) of the lace diagram associated with
 $Q^{(p,q)}$. 
 By choosing a suitable order of the basis, we may assume that
 for $i=1,\dots, N$, 
 there exists a path from $\bs{u}_{i}$ to $\bs{u}_{i}'$ in the complete
 lace diagram. {\it Via this basis}, we identify $\bigoplus_{\tau}
 L_{\tau}$ and $\bigoplus_{\sigma} L_{\sigma}$ with $\C^{N}$.
 Note that this identification depends on $(p,q)$. 
 Our choice of the basis ensures that $Y_{(p,q)}(A_{0})$ is
 of the form
 \[
  Y_{(p,q)}(A_{0}) = E_{N} + \sum_{(s, t)\in \mathcal{B}} E_{st},
 \]
 where $E_{N}$ is the identity matrix of size $N$,
 $E_{st}$ is the $(s, t)$-matrix unit of size $N$, and
 $\mathcal{B}$ is an index set. Moreover, by looking at the shape of
 the complete lace diagram, we see that $\mathcal{B}$ has the
 following property:
 \begin{equation}
  \label{form:ConditionOnBeta}
  \textrm{
  any two element $(s, t), \, (s',t')\in \mathcal{B}$
  satisfy $s\neq t'$ and $t\neq s'$.}
 \end{equation}

 \begin{example}
  Let us explain the reason for \eqref{form:ConditionOnBeta}
  by examples. 
  In Example~\ref{exmp:FirstExampleIV}, $N=6$ for $(p, q)= (3, 4)$
  and $N=2$ for $(p,q)=(1, 5)$. If we choose the basis  
  $\{\bs{u}_{1},\dots, \bs{u}_{N}\}$
  and $\{\bs{u}_{1}',\dots, \bs{u}_{N}'\}$ as
  Figure~\ref{figure:BasisForFirstExample}, 
  then  we have
  \[
   Y_{(3,4)}(A_{0}) = E_{6}, \qquad Y_{(1,5)}(A_{0}) = E_{2}. 
  \]
  \begin{figure}[htbp]
   \begin{center}
   \begin{picture}(130,100)(0,10)
   \put(20,100){For $(p,q)=(3, 4)$}    
   \put(7,7){$\bullet$}
   \put(12, 10){\vector(1,0){20}}
   \put(7,22){$\bullet$}
   \put(12, 25){\vector(1,0){20}}   
   \put(30,7){$\bullet$}
   \put(35,10){\vector(1,0){20}}      
   \put(30,22){$\bullet$}
   \put(35,25){\vector(1,0){20}}         
   \put(30,37){$\bullet$}
   \put(35,40){\vector(1,0){20}}            
   \put(30,52){$\bullet$}
   \put(35,55){\vector(1,0){20}}            
   \put(30,67){$\bullet$}
   \put(35,70){\vector(1,0){20}}            
   \put(56,9){$\bs{u}_{6}$}
   \put(56,24){$\bs{u}_{5}$}
   \put(56,39){$\bs{u}_{4}$}
   \put(56,54){$\bs{u}_{3}$}
   \put(56,69){$\bs{u}_{2}$}
   \put(56,84){$\bs{u}_{1}$}
   \put(68,10){\vector(1,0){20}}
   \put(68,25){\vector(1,0){20}}
   \put(68,40){\vector(1,0){20}}
   \put(68,55){\vector(1,0){20}}
   \put(68,70){\vector(1,0){20}}
   \put(68,85){\vector(1,0){20}}            
   \put(89,9){$\bs{u}_{6}'$}
   \put(89,24){$\bs{u}_{5}'$}
   \put(89,39){$\bs{u}_{4}'$}
   \put(89,54){$\bs{u}_{3}'$}
   \put(89,69){$\bs{u}_{2}'$}
   \put(89,84){$\bs{u}_{1}'$}
   \put(103,10){\vector(1,0){20}}
   \put(103,25){\vector(1,0){20}}   
   \put(126,7){$\bullet$}
   \put(126,22){$\bullet$}   
  \end{picture}
   \qquad\qquad\qquad
   \begin{picture}(100,100)(0,10)
   \put(20,100){For $(p,q)=(1, 5)$}
   \put(0,9){$\bs{u}_{2}$}
   \put(12, 10){\vector(1,0){20}}
   \put(0,24){$\bs{u}_{1}$}
   \put(12, 25){\vector(1,0){20}}   
   \put(30,7){$\bullet$}
   \put(35,10){\vector(1,0){20}}      
   \put(30,22){$\bullet$}
   \put(35,25){\vector(1,0){20}}         
   \put(30,37){$\bullet$}
   \put(35,40){\vector(1,0){20}}            
   \put(30,52){$\bullet$}
   \put(35,55){\vector(1,0){20}}            
   \put(30,67){$\bullet$}
   \put(35,70){\vector(1,0){20}}            
   \put(53,7){$\bullet$}
   \put(53,22){$\bullet$}
   \put(53,37){$\bullet$}
   \put(53,52){$\bullet$}
   \put(53,67){$\bullet$}
   \put(53,82){$\bullet$}
   \put(58,10){\vector(1,0){20}}
   \put(58,25){\vector(1,0){20}}
   \put(58,40){\vector(1,0){20}}
   \put(58,55){\vector(1,0){20}}
   \put(58,70){\vector(1,0){20}}
   \put(58,85){\vector(1,0){20}}            
   \put(76,7){$\bullet$}
   \put(76,22){$\bullet$}
   \put(76,37){$\bullet$}
   \put(76,52){$\bullet$}
   \put(76,67){$\bullet$}
   \put(76,82){$\bullet$}
   \put(81,10){\vector(1,0){20}}
   \put(81,25){\vector(1,0){20}}   
   \put(102,9){$\bs{u}_{2}'$}
   \put(102,24){$\bs{u}_{1}'$}   
  \end{picture}    
  \end{center}
    \caption{Basis $\{\bs{u}_{i}\}$ and $\{\bs{u}_{i}'\}$
   for the case of Example~\ref{exmp:FirstExampleIV}}
  \label{figure:BasisForFirstExample}
 \end{figure}     

 In Example~\ref{exmp:SecondExampleIV}, $N=9$ for
 $(p, q)= (1, 4), (2, 5)$.
 If we choose the basis  
 $\{\bs{u}_{1},\dots, \bs{u}_{N}\}$
 and $\{\bs{u}_{1}',\dots, \bs{u}_{N}'\}$ as
  Figure~\ref{figure:BasisForSecondExample}    , then  we have
 \[
   Y_{(1,4)}(A_{0}) = E_{9}+E_{16}+E_{27},
  \qquad Y_{(2,5)}(A_{0}) = E_{9}+E_{84}+E_{95}. 
 \]
 \begin{figure}[htbp]
 \begin{center}
  \begin{picture}(130,115)(0,10)
   \put(10,115){For $(p,q)= (1, 4)$}
   \put(-2,39){$\bs{u}_{2}$}
   \put(-2,54){$\bs{u}_{1}$}
   \put(10,40){\vector(1,0){20}}
   \put(10,55){\vector(1,0){20}}   
   \put(31,39){$\bs{u}_{2}'$}
   \put(31,54){$\bs{u}_{1}'$}
   \put(31,69){$\bs{u}_{5}'$}
   \put(31,84){$\bs{u}_{4}'$}
   \put(31,99){$\bs{u}_{3}'$}
   \put(63,40){\vector(-1,0){20}}            
   \put(63,55){\vector(-1,0){20}}         
   \put(63,70){\vector(-1,0){20}}      
   \put(63,85){\vector(-1,0){20}}   
   \put(63,100){\vector(-1,0){20}}
   \put(64,9){$\bs{u}_{9}$}
   \put(64,24){$\bs{u}_{8}$}
   \put(64,39){$\bs{u}_{7}$}
   \put(64,54){$\bs{u}_{6}$}
   \put(64,69){$\bs{u}_{5}$}
   \put(64,84){$\bs{u}_{4}$}
   \put(64,99){$\bs{u}_{3}$}
   \put(77,10){\vector(1,0){20}}
   \put(77,25){\vector(1,0){20}}
   \put(77,40){\vector(1,0){20}}
   \put(77,55){\vector(1,0){20}}            
   \put(98,9){$\bs{u}_{9}'$}
   \put(98,24){$\bs{u}_{8}'$}
   \put(98,39){$\bs{u}_{7}'$}
   \put(98,54){$\bs{u}_{6}'$}
  \put(130,37){$\bullet$}
  \put(130,52){$\bullet$}
  \put(130,40){\vector(-1,0){20}}
  \put(130,55){\vector(-1,0){20}}         
  \end{picture}
  \qquad\qquad
  \begin{picture}(130,115)(0,10)
   \put(10,115){For $(p,q)= (2, 5)$}
   \put(5,37){$\bullet$}
   \put(5,52){$\bullet$}
   \put(10,40){\vector(1,0){20}}
   \put(10,55){\vector(1,0){20}}   
   \put(31,39){$\bs{u}_{5}'$}
   \put(31,54){$\bs{u}_{4}'$}
   \put(31,69){$\bs{u}_{3}'$}
   \put(31,84){$\bs{u}_{2}'$}
   \put(31,99){$\bs{u}_{1}'$}
   \put(63,40){\vector(-1,0){20}}            
   \put(63,55){\vector(-1,0){20}}         
   \put(63,70){\vector(-1,0){20}}      
   \put(63,85){\vector(-1,0){20}}   
   \put(63,100){\vector(-1,0){20}}
   \put(64,9){$\bs{u}_{7}$}
   \put(64,24){$\bs{u}_{6}$}
   \put(64,39){$\bs{u}_{5}$}
   \put(64,54){$\bs{u}_{4}$}
   \put(64,69){$\bs{u}_{3}$}
   \put(64,84){$\bs{u}_{2}$}
   \put(64,99){$\bs{u}_{1}$}
   \put(77,10){\vector(1,0){20}}
   \put(77,25){\vector(1,0){20}}
   \put(77,40){\vector(1,0){20}}
   \put(77,55){\vector(1,0){20}}            
   \put(98,9){$\bs{u}_{7}'$}
   \put(98,24){$\bs{u}_{6}'$}
   \put(98,39){$\bs{u}_{9}'$}
   \put(98,54){$\bs{u}_{8}'$}
  \put(130,39){$\bs{u}_{9}$}
  \put(130,54){$\bs{u}_{8}$}
  \put(130,40){\vector(-1,0){20}}
  \put(130,55){\vector(-1,0){20}}         
  \end{picture}
 \end{center}
 \caption{Basis $\{\bs{u}_{i}\}$ and $\{\bs{u}_{i}'\}$
 for the case of Example~\ref{exmp:SecondExampleIV}}
  \label{figure:BasisForSecondExample}    
 \end{figure}   
 \end{example}

 The property \eqref{form:ConditionOnBeta}
 yields the following three formulas.
 \begin{align}
  \label{form:Ypq1}
  Y_{(p,q)}(A_{0})&=
  \prod_{(s, t)\in \mathcal{B}} \left(E_{N} + E_{st}
  \right), \\[5pt]
    \label{form:Ypq2}
  f_{(p,q)}(A_{0}) &=  1, \\[5pt]
    \label{form:Ypq3}
  {}^{t} \left( Y_{(p,q)}(A_{0})\right)^{-1} &=
  E_{N} - \sum_{(t, s)\in \mathcal{B}} E_{st}.
 \end{align}
 By using these formulas, 
 we generalize the calculation in
 Example~\ref{exmp:GradLogFEquiorientedCase} to the case of
 arbitrary orientation. For $a\in Q_{1} = \{1,\dots, r-1\}$,
 let $e_{i}^{(h(a))} \; (i=1,\dots, n_{h(a)})$ be the basis of
 $L_{h(a)}$ and $e_{j}^{(t(a))} \; (j=1,\dots, n_{t(a)})$
 the basis of $L_{t(a)}$. We take the coordinate system
 with respect to these basis and denote by
 $x_{ij}^{(a)}$ the $(i, j)$-th component of $X_{h(a), t(a)}$.
 Moreover, for $1\leq s, t\leq N$, we denote by
 $y_{st}$ the $(s, t)$-th component of $Y_{(p, q)}(v)$.
 Then the formula~\eqref{form:ChainRule} can be generalized as
 follows.
 \[
  \frac{\partial f_{(p,q)}}{\partial x_{ij}^{(a)}}(A_{0}) =
 \sum_{1\leq s, t\leq N} \frac{\partial f_{(p,q)}}{\partial
 y_{st}} (A_{0}) \cdot 
 \frac{\partial y_{st}}{\partial x_{ij}^{(a)}}(A_{0}).
 \]
 For a matrix $R$, let $\Delta(R)$ be the cofactor matrix. 
 Then \eqref{form:Ypq2} and \eqref{form:Ypq3} imply that
 \begin{align*}
  \left(\frac{\partial f_{(p,q)}}{\partial
 y_{st}} (A_{0})
 \right)_{1\leq s, t\leq N} &= {}^{t}\Delta
 \left(Y_{(p,q)}(A_{0})
 \right) = f_{(p,q)}(A_{0})\cdot {}^{t}
 \left(Y_{(p,q)}(A_{0})
 \right)^{-1} \\
  &=  E_{N} - \sum_{(t, s)\in \mathcal{B}} E_{st},
 \end{align*}
 and thus we see that 
 \[
  \frac{\partial f_{(p,q)}}{\partial
 y_{st}} (A_{0}) =
 \left\{
 \begin{array}{ccl}
  1 & & s=t  \\
  -1 & & (t, s)\in \mathcal{B} \\
  0 & & \textrm{otherwise}
 \end{array}.
 \right.
 \]
 However, if $(t, s)\in \mathcal{B}$,
 then $y_{st} = 0$; the reader is referred to
 \eqref{form:ConditionOnBeta}. Hence
 we may assume that $s=t$, and 
 it is enough to calculate
 $\dfrac{\partial y_{ss}}{\partial x_{ij}^{(a)}}(A_{0})$.
 By generalizing the calculation of
 $\dfrac{\partial y_{st}}{\partial x_{ij}^{(k,k-1)}}(A_{0})$
 in Example~\ref{exmp:GradLogFEquiorientedCase},
 we see that
 \[
  \frac{\partial y_{ss}}{\partial x_{ij}^{(a)}}(A_{0}) =
 \left\{
 \begin{array}{ccl}
  1 & &
   \left(\begin{array}{l}
   \textrm{In the complete lace diagram,} \\
   \textrm{there exists an arrow} \;\;
   e_{j}^{(t(a))}\rightarrow e_{i}^{(h(a))} \\
   \textrm{on the path connecting}\;\;
    \bs{u}_{s} \;\; \textrm{and}\;\; \bs{u}_{s}'
   \end{array}\right)
   \\[15pt]
  0 & & \;\; (\textrm{otherwise})
 \end{array}.
 \right. 
 \]
 We therefore conclude that the diagram representing the
 {\it value} of $\grad\log f_{(p,q)}(A_{0})$ consists only of
 the paths connecting $\bs{u}_{s}$ and $\bs{u}_{s}' \;
 (s= 1,\dots, N)$. This diagram is nothing but the exact lace
 diagram corresponding to $\mathcal{O}^{(p,q)}$. 
 This completes the proof of the lemma.
\end{proof}

As we have mentioned in Remark~\ref{remark:AdditionSuperposition},
the summation
\[
 \grad\log \underline{f}^{\underline{s}} (A_{0}) =
 \sum_{i=1}^{l} s_{i} \cdot \grad\log f_{i}(A_{0})
\]
can be transfered to the superposition of the lace diagrams, and
this ensures the validity of the ``superposition method'' for the
$a$-functions.

\begin{example}
 Let us consider the case of Example~\ref{exmp:FirstExampleIV}.
 By Lemma~\ref{lemma:GradLogF}, 
 $\grad\log \underline{f}^{\underline{s}}(A_{0}) =
 s_{1}\grad\log f_{1}(A_{0}) + s_{2} \grad\log f_{2}(A_{0})$
 is given as in
 Figure~\ref{figure:GradLogFInFirstExample}. 
 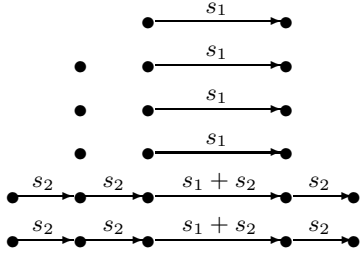
\begin{figure}[htbp] 
\begin{center}
   \begin{picture}(100,90)(0,15)
    \setlength{\unitlength}{1.1pt}
   \put(7,7){$\bullet$}
   \put(7,22){$\bullet$}
   \put(10,10){\vector(1,0){20}}
   \put(10,25){\vector(1,0){20}}
   \put(16,28){{\footnotesize$s_{2}$}}
   \put(16,13){{\footnotesize$s_{2}$}}
   \put(30,7){$\bullet$}
   \put(30,22){$\bullet$}
   \put(30,37){$\bullet$}
   \put(30,52){$\bullet$}
   \put(30,67){$\bullet$}
   \put(34,10){\vector(1,0){20}}
   \put(34,25){\vector(1,0){20}}
   \put(40,28){{\footnotesize$s_{2}$}}
   \put(40,13){{\footnotesize$s_{2}$}}   
   \put(53,7){$\bullet$}
   \put(53,22){$\bullet$}
   \put(53,37){$\bullet$}
   \put(53,52){$\bullet$}
   \put(53,67){$\bullet$}
   \put(53,82){$\bullet$}
   \put(58,10){\vector(1,0){43}}
   \put(58,25){\vector(1,0){43}}
   \put(58,40){\vector(1,0){43}}
   \put(58,55){\vector(1,0){43}}
   \put(58,70){\vector(1,0){43}}
   \put(58,85){\vector(1,0){43}}
   \put(75,88){{\footnotesize$s_{1}$}}        
   \put(75,73){{\footnotesize$s_{1}$}}    
   \put(75,58){{\footnotesize$s_{1}$}}            
   \put(75,43){{\footnotesize$s_{1}$}}        
   \put(67,28){{\footnotesize$s_{1}+s_{2}$}}
   \put(67,13){{\footnotesize$s_{1}+s_{2}$}}
   \put(100,7){$\bullet$}
   \put(100,22){$\bullet$}
   \put(100,37){$\bullet$}
   \put(100,52){$\bullet$}
   \put(100,67){$\bullet$}
   \put(100,82){$\bullet$}
   \put(105,10){\vector(1,0){20}}
   \put(105,25){\vector(1,0){20}}
   \put(110,28){{\footnotesize$s_{2}$}}
   \put(110,13){{\footnotesize$s_{2}$}}   
   \put(123,7){$\bullet$}
   \put(123,22){$\bullet$}   
  \end{picture}      
\end{center}
 \caption{Diagram expressing the value of 
  $\grad\log \underline{f}^{\underline{s}}(A_{0})$ for 
 Example~\ref{exmp:FirstExampleIV}}
  \label{figure:GradLogFInFirstExample}
 \end{figure}
 If we fix a basis of $\Rep(Q, \underline{n})$, then 
 the arrows $\overset{s_{1}}{\longrightarrow}, \,
 \overset{s_{2}}{\longrightarrow}, \,
 \overset{s_{1}+s_{2}}{\longrightarrow}$ 
 correspond to $s_{1} E_{ij}, \,
 s_{2} E_{i', j'}, \, (s_{1}+s_{2}) E_{i'', j''}$
 with some matrix units $E_{ij},
 E_{i', j'}, E_{i'',j''}$. Then, by the definition of $f_1$
 (recall that $f_{1}$ is constructed as $f_{(3, 4)}$ in 
 Example~\ref{exmp:FirstExampleII}),
 we have
 \[
  a_{(1,0)}(\underline{s}) = f_{1}(A_{0}) f_{1}
 (\grad\log \underline{f}^{\underline{s}} (A_{0})) =
 1\cdot s_{1}^4 (s_{1}+s_{2})^2 = s_{1}^4 (s_{1}+s_{2})^2.
 \]
 Similarly, we have
  \[
  a_{(0,1)}(\underline{s}) = f_{2}(A_{0}) f_{2}
 (\grad\log \underline{f}^{\underline{s}} (A_{0})) =
 1\cdot s_{2}^6 (s_{1}+s_{2})^2 = s_{2}^6 (s_{1}+s_{2})^2,
 \]
 for $f_{2}$ is constructed as $f_{(1, 5)}$ in
 Example~\ref{exmp:FirstExampleII}. Hence we have
 \[
  a_{\underline{m}}(\underline{s}) = a_{(1,0)}(\underline{s})^{m_{1}}
 a_{(0,1)}(\underline{s})^{m_{2}} =
 s_{1}^{4m_{1}} s_{2}^{6m_{2}} (s_{1}+s_{2})^{2(m_{1}+m_{2})}.
 \]
 In general, the $a$-functions determine the $b$-functions except the
 ``constant terms''. In this case, we have
 \begin{align}
  \label{form:ByAfunInFirstExample}
  b_{\underline{m}}(\underline{s})&= \prod_{r=1}^{4}
  [s_{1}+\alpha_{1,r}]_{m_{1}}\times \prod_{r=1}^{6}
  [s_{2}+\alpha_{2,r}]_{m_{2}}\times \prod_{r=1}^{2}
  [s_{1}+s_{2}+\alpha_{3,r}]_{m_{1}+m_{2}}
 \end{align}
 with some $\alpha_{j,r}\in \Q_{>0}$. Here we have employed the
 structure theorem on $b$-functions (see
 M.~Sato~\cite[Theorem~2]{EnAyumi}) and Kashiwara's
 theorem~\cite[Theorem~6.9]{KashiwaraBook}; see also
 Ukai~\cite[Theorem~1.3.5]{Dynkin}.   

 Similarly, in the case of Example~\ref{exmp:SecondExampleIV},
 we see that 
 \[
  a_{\underline{m}}(\underline{s}) = s_{1}^{4m_{1}}
 s_{2}^{4m_{2}} (s_{1}+s_{2})^{5(m_{1}+m_{2})}, 
 \]
 and 
 \begin{align}
  \label{form:ByAfunInSecondExample}
  b_{\underline{m}}(\underline{s})&= \prod_{r=1}^{4}
  [s_{1}+\alpha_{1,r}]_{m_{1}}\times \prod_{r=1}^{4}
  [s_{2}+\alpha_{2,r}]_{m_{2}}\times \prod_{r=1}^{5}
  [s_{1}+s_{2}+\alpha_{3,r}]_{m_{1}+m_{2}}
 \end{align}
 with some $\alpha_{j, r}\in \Q_{>0}$. 
 In any case, the calculation of
 $b_{\underline{m}}(\underline{s})$ is reduced to the
 determination of $\alpha_{j,r}$.  
\end{example}

\section{Proof of Theorem~\ref{theorem:MainTheorem}}
\label{section:Proof}

The determination of  $\alpha_{j, r}$ can be carried out by
localization of $b$-functions (cf. Ukai~\cite[p.~57]{Dynkin},
\cite{SugiGeolocalB}).
We briefly recall the localization of $b$-functions, to
the extent necessary for the calculation in the present paper. 
(the idea of \cite{Dynkin} can be applied to a more general
setting.)
Let $(G, V)$ be a reductive prehomogeneous vector space and
$f\in \C[V]$ a relatively invariant polynomial. Take an arbitrary 
point $v_{1}\in V$. Let $\mathcal{O}_{1} = G\cdot v_{1}$ be the
$G$-orbit through $v_{1}$, and $G_{v_{1}}$ be the
isotropy subgroup of
$G$ at $v_{1}$. Then $G_{v_{1}}$ acts on the tangent space
$T_{v_{1}}(\mathcal{O}_{1}) = \mathfrak{g}\cdot v_{1}$.
Now we {\it assume} that $G_{v_{1}}$ is reductive. 
Then there exists a $G_{v_{1}}$-invariant subspace
$W$ of $V$ such that $V = T_{v_{1}}(\mathcal{O}_{1})\oplus W$.
We call $(G_{v_{1}}, W)$ the {\it slice representation} at
$v_{1}$ (cf. Kac~\cite{KacSlice}, Shmelkin~\cite{Shmelkin}). 
It is easy to see that $(G_{v_{1}}^{\circ}, W)$ is a
reductive prehomogeneous vector space, where $G_{v_{1}}^{\circ}$
is the connected component of $G_{v_{1}}$. 
Moreover,
the function
$f_{v_{1}}$ on $W$ defined by $f_{v_{1}}(w) = f(v_{1}+w) \; (w\in W)$
is a relatively invariant polynomial of $(G_{v_{1}}^{\circ}, W)$.
We denote by $b_{f, \mathcal{O}_{1}}(s)$ the $b$-function of
$f_{v_{1}}$; in effect, if $v_{1}' \in \mathcal{O}_{1}$, then
the $b$-function of $f_{v_{1}'}$ coincides with that of $f_{v_{1}}$. 
Then we have the following lemma.
\begin{lemma}
 \label{lemma:Localization}
 $b_{f, \mathcal{O}_{1}}(s)$ divides $b_{f}(s)$, the $b$-function of $f$. 
\end{lemma}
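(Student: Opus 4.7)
The strategy is to invoke Luna's étale slice theorem at $v_{1}$, use the resulting local product structure to rewrite the functional equation for $f$ as a functional equation for $f_{v_{1}}$ on the slice $W$, and deduce divisibility from the minimality built into the definition of $b_{f_{v_{1}}}$.

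First I would apply Luna's slice theorem, legitimate since $G_{v_{1}}$ is reductive: in a $G$-invariant Zariski (or étale) neighborhood of $\mathcal{O}_{1}$, the variety $V$ is $G$-equivariantly identified with (an étale cover of) $G\times^{G_{v_{1}}} W$ via $[g,w]\mapsto g\cdot(v_{1}+w)$. Under this identification, the relative invariance of $f$ reads $f(g\cdot(v_{1}+w))=\chi(g)\,f_{v_{1}}(w)$.

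Next I would transport the functional equation $f^{*}(\partial_{v})f(v)^{s+1}=b_{f}(s)f(v)^{s}$ to these coordinates. Pick a basis $X_{1},\dots,X_{d}$ of a $G_{v_{1}}$-stable complement of $\mathrm{Lie}(G_{v_{1}})$ in $\mathfrak{g}$, so that the vector fields $\widetilde{X}_{i}$ span the orbit directions at $v_{1}$, and complete by a basis of $W$ to obtain local coordinates. Expand $f^{*}(\partial_{v})$ as a polynomial in the $\widetilde{X}_{i}$ and the slice derivatives $\partial_{w_{j}}$. The decisive observation is that each $\widetilde{X}_{i}$ acts as a first-order scalar on a power of $f$: $\widetilde{X}_{i}\cdot f^{s+1}=(s+1)\,d\chi(X_{i})\,f^{s+1}$; thus every monomial in the $\widetilde{X}_{i}$, applied to $\chi(g)^{s+1}f_{v_{1}}(w)^{s+1}$, produces a polynomial in $s$ times $\chi(g)^{s+1}f_{v_{1}}(w)^{s+1}$. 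Commutators between the $\widetilde{X}_{i}$ and $\partial_{w_{j}}$ yield lower-order terms of the same shape, so after collecting, the left-hand side becomes $\chi(g)^{s+1}\cdot P(w,\partial_{w},s)\,f_{v_{1}}(w)^{s+1}$ for some operator $P$ with polynomial coefficients in $s$, while the right-hand side is $b_{f}(s)\,\chi(g)^{s}f_{v_{1}}(w)^{s}$.

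Cancelling $\chi(g)^{s+1}$ from both sides yields the relation
\[
P(w,\partial_{w},s)\,f_{v_{1}}(w)^{s+1}=b_{f}(s)\,f_{v_{1}}(w)^{s}
\]
on the reductive prehomogeneous vector space $(G_{v_{1}}^{\circ},W)$. Since $b_{f,\mathcal{O}_{1}}(s)$ is the $b$-function of $f_{v_{1}}$ and, by the general theory of $b$-functions for reductive prehomogeneous vector spaces (equivalence with the Bernstein--Sato polynomial, cf.\ \cite{KashiwaraBook}), it is the minimal monic polynomial admitting such a functional equation with \emph{any} differential operator on $W$, the displayed identity forces $b_{f,\mathcal{O}_{1}}(s)\mid b_{f}(s)$.

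The main obstacle I anticipate is the bookkeeping inside the Weyl algebra when decomposing $f^{*}(\partial_{v})$ into orbit and slice pieces: one must verify that the commutators generated by transposing the $\widetilde{X}_{i}$ past the $\partial_{w_{j}}$ retain the form ``polynomial-in-$s$ coefficient times a differential operator in $w$ only,'' so that the extracted operator $P$ is well-defined. This is precisely where $G_{v_{1}}$-reductivity is used, via a $G_{v_{1}}$-stable splitting of $V$ and the corresponding clean decomposition of $\mathfrak{g}$, which keeps the recursive commutator computation terminating.
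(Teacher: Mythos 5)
Your overall strategy is the standard one: the paper itself gives no proof of Lemma~\ref{lemma:Localization}, quoting it from Ukai~\cite[p.~57]{Dynkin} and \cite{SugiGeolocalB}, and those sources argue exactly as you propose --- trivialize $f$ along the orbit as $f(g\cdot(v_{1}+w))=\chi(g)f_{v_{1}}(w)$ via the slice theorem (legitimate since $G_{v_{1}}$ is reductive), transport the functional equation to the slice, and conclude by a minimality property. Your Weyl-algebra bookkeeping is also sound in principle: $\widetilde{X}_{i}f^{s+1}=(s+1)\,d\chi(X_{i})f^{s+1}$ holds exactly, and commutators of the $\widetilde{X}_{i}$ with slice derivatives and with the (locally defined) coefficient functions only produce lower-order terms of the same shape, so an operator $P(w,\partial_{w},s)$ with the displayed property does come out.

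The genuine gap is in your final step, and it is a conflation rather than a technicality. The polynomial $b_{f,\mathcal{O}_{1}}(s)$ is \emph{defined} (via Lemma~\ref{prop:b-funOf1var}) by the specific functional equation $f_{v_{1}}^{*}(\grad_{w})f_{v_{1}}(w)^{s+1}=b_{f_{v_{1}}}(s)f_{v_{1}}(w)^{s}$, whereas the minimal monic polynomial admitting a functional equation with an \emph{arbitrary} differential operator is the Bernstein--Sato polynomial $\tilde{b}_{f_{v_{1}}}(s)$, which a priori is only a divisor of $b_{f_{v_{1}}}(s)$. Your argument therefore proves $\tilde{b}_{f_{v_{1}}}(s)\mid b_{f}(s)$, which is strictly weaker than the lemma unless one also knows $b_{f,\mathcal{O}_{1}}(s)=\tilde{b}_{f_{v_{1}}}(s)$ for the reductive slice PV $(G_{v_{1}}^{\circ},W)$. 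That equivalence is a genuine theorem requiring a compact-averaging/multiplicity-one type argument (this is the circle of ideas of \cite{SatoSugi}); it is \emph{not} in \cite{KashiwaraBook}, which only gives the rationality and negativity of the roots of Bernstein--Sato polynomials, so your citation does not close the gap. Two smaller repairs are also needed: (i) the operator $P$ you extract has coefficients regular only near $w=0$ (inverting the map expressing $\partial_{v}$ through the $\widetilde{X}_{i}$ and $\partial_{w_{j}}$ is possible only in a neighborhood of $v_{1}$), so minimality yields the \emph{local} Bernstein--Sato polynomial of $f_{v_{1}}$ at the origin; to pass to the global one you should note that $f_{v_{1}}$ is automatically homogeneous --- its homogeneous components are relative invariants of $(G_{v_{1}}^{\circ},W)$ with the same character, hence proportional, so only one survives --- whence the local polynomial at $0$ equals the global one; (ii) the Luna identification is only \'etale, which is harmless precisely because Bernstein--Sato polynomials are insensitive to \'etale base change and to multiplication by units, but this deserves a word. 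With the equivalence $b_{f_{v_{1}}}=\tilde{b}_{f_{v_{1}}}$ properly cited or proved, your argument becomes the proof the paper's references intend.
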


To apply Lemma~\ref{lemma:Localization}
to our prehomogeneous vector spaces
$(GL(\underline{n}), \Rep(Q,\underline{n}))$,
we recall some facts from representation theory of quivers.
A useful reference for this section is a survey by Brion~\cite{Brion}.
Now, for a moment, we assume that $Q$ is an arbitrary quiver which 
has no oriented cycles. 
For two dimension vectors $\underline{n} = (n_{i})_{i\in Q_{0}},
\underline{m} = (m_{i})_{i\in Q_{0}}$, we consider quiver
representation spaces
\[
 \Rep(Q, \underline{n}) = \bigoplus_{a\in Q_{1}}
 \Hom(L_{t(a)}, L_{h(a)}), \qquad
 \Rep(Q, \underline{m}) = \bigoplus_{a\in Q_{1}}
 \Hom(L_{t(a)}', L_{h(a)}'), 
\]
where $L_{i}$ (resp.~$L_{i}'$) is a vector space of dimension
$n_{i}$ (resp.~$m_{i}$). Take
$A= (A_{h(a), t(a)})_{a\in Q_{1}}\in \Rep(Q, \underline{n})$ and
$B= (B_{h(a), t(a)})_{a\in Q_{1}}\in \Rep(Q, \underline{m})$.
Then we define a map
\begin{equation}
 \label{form:DefOfDAB}
 d_{A, B} : \bigoplus_{i\in Q_{0}} \Hom(L_{i}, L_{i}') \longrightarrow
 \bigoplus_{a\in Q_{1}} \Hom(L_{t(a)}, L_{h(a)}')
\end{equation}
by
\[
 d_{A, B} (\phi) =
 \left(\phi_{h(a)} A_{h(a), t(a)} - B_{h(a), t(a)} \phi_{t(a)}
 \right)_{a\in Q_{1}}
\]
for $\phi = (\phi_{i})_{i\in Q_{0}} \in
\bigoplus_{i\in Q_{0}} \Hom(L_{i}, L_{i}')$. It is known 
(see Ringel~\cite{RingelSpecies},  or
 Brion~\cite[Corollary 1.4.2]{Brion}) that
$\Hom_{Q}(A, B)$ and $\Ext_{Q}(A, B)$, which are defined through
the Ringel resolution, are equal to $\Ker d_{A, B}$ and
$\Coker d_{A, B}$, respectively. That is, 
\[
 \Hom_{Q}(A, B) = \Ker d_{A, B}, \qquad
 \Ext_{Q}(A, B) = \Coker d_{A, B}.  
\]
Then we have an exact sequence
\begin{align*}
 0 &\longrightarrow \Hom_{Q}(A, B) \longrightarrow
 \bigoplus_{i\in Q_{0}} \Hom_{Q}(L_{i}, L_{i}')
 \overset{d_{A,B}}{\longrightarrow}
 \bigoplus_{a\in Q_{1}} \Hom(L_{t(a)}, L_{h(a)}') \\
 &\longrightarrow \Ext_{Q}(A, B) \longrightarrow 0.
\end{align*}
Taking dimensions in the exact sequence above yields
\[
 \dim \Hom_{Q}(A, B) - \dim \Ext_{Q}(A, B) =
 \sum_{i\in Q_{0}} n_{i} m_{i} - \sum_{a\in Q_{1}}
 n_{t(a)} m_{h(a)}.
\]
The Euler form of a quiver $Q$ is the
bilinear form $\left\langle\;\, , \; \right\rangle_{Q}$ on
$\R^{Q_{0}}$ defined by
\begin{equation}
 \label{form:EulerForm}
 \left\langle\underline{n}, \underline{m}\right\rangle_{Q} =
 \sum_{i\in Q_{0}} n_{i} m_{i} - \sum_{a\in Q_{1}} n_{t(a)}
 m_{h(a)}
\end{equation}
for any $\underline{n} = (n_{i})_{i\in Q_{0}}$ and
$\underline{m} = (m_{i})_{i\in Q_{0}}$.
When $A$ (resp.~$B$) is an element of $\Rep(Q, \underline{n})$
(resp.~$\Rep(Q, \underline{m})$), we often write
$\left\langle A, B\right\rangle_{Q}$ for
$\left\langle \underline{n}, \underline{m}\right\rangle_{Q}$.
We thus obtain Ringel's formula
(cf.\ \cite{RingelSpecies})
\begin{equation}
 \label{form:Ringel}
 \left\langle A, B\right\rangle_{Q} = \dim \Hom_{Q}(A, B) - \dim
 \Ext_{Q}(A, B), 
\end{equation}
which will be used later. 
Next let $\underline{n} =\underline{m}$ and $A=B$. We identify
$\bigoplus_{i\in Q_{0}} \Hom(L_{i}, L_{i})$ with the
Lie algebra
$\mathfrak{gl}(\underline{n}) = \bigoplus_{i\in Q_{0}} 
\mathfrak{gl}(n_{i})$ of $GL(\underline{n})$.
Then the map $d_{A, A}$ can be identified with
the differential map at the identity of the orbit map
\[
 GL(\underline{n}) \ni g \longmapsto g\cdot A \in \Rep(Q,
 \underline{n}). 
\]
By definition, $\Hom_{Q}(A, A)$ is isomorphic to the isotropy subalgebra
$\mathfrak{gl}(\underline{n})_{A}$ of $\mathfrak{gl}(\underline{n})$
at $A$, and $\Ext_{Q}(A, A)$ is isomorphic
to the normal space in $\Rep(Q, \underline{n})$ to the orbit
$GL(\underline{n})\cdot A$ at $A$. More precisely, we have
\[
 \Ext_{Q}(A,A) \cong \Rep(Q, \underline{n})/ T_{A}
 \left(GL(\underline{n})\cdot A\right). 
\]

These notions in representation theory of quivers 
are linked to the slice
representations in the following way:
Let $\mathcal{O}_{1}$ be the closed orbit in
$\{A\in \Rep(Q,\underline{n})\, ;\, f(A)\neq 0\}$, where
$f$ is a relative invariant of $(GL(\underline{n}), \Rep(Q,
\underline{n}))$. Take an element $A_{1}$ of $\mathcal{O}_{1}$;
this is a {\it locally semi-simple representation} of $Q$ in the sense
of Shmelkin~\cite{Shmelkin}. 
Then, Matsushima's theorem implies that 
the isotropy subgroup $GL(\underline{n})_{A_{1}}$ is reductive,
and thus there exists a $GL(\underline{n})_{A_{1}}$-invariant
subspace $W_{1}$ satisfying
\[
 \Rep(Q,\underline{n}) = T_{A_{1}}(\mathcal{O}_{1})\oplus
 W_{1}.
\]
Let $\mathfrak{gl}(\underline{n})_{A_{1}}$ be 
the isotropy subalgebra of $\mathfrak{gl}(\underline{n})$ at
$A_{1}$. The argument above implies that 
\begin{align*}
 \mathfrak{gl}(\underline{n})_{A_{1}}
 &\cong \Hom_{Q}\left(A_{1}, A_{1}\right), \\
 W_{1} &\cong \Ext_{Q}\left(A_{1}, A_{1} \right).
\end{align*}
Shmelkin~\cite{Shmelkin} showed that the structure of
$(GL(\underline{n})_{A_{1}}, W_{1})$ can be described by using the
{\it local quiver}. Let $A_{1}= \bigoplus_{i=1}^{t} m_{i}^{(1)}
I_{i}^{(1)}$ be the indecomposable decomposition of $A_{1}$,
where $I_{i}^{(1)}$ is an indecomposable representation of $Q$ and
$m_{i}^{(1)}$ is the multiplicity of $I_{i}^{(1)}$ in $A_{1}$. 
By Shmelkin~\cite[Proposition~8]{Shmelkin}, we have
\begin{equation}
 \label{form:HomInLocallySemisimple}
 \dim \Hom_{Q}(I_{i}^{(1)}, I_{j}^{(1)}) = \delta_{ij}, 
\end{equation}
and
by Ringel's formula~\eqref{form:Ringel}, we have
\[
 \delta_{ij} - \langle I_{i}^{(1)}, I_{j}^{(1)}\rangle =
\dim \Ext_{Q}(I_{i}^{(1)}, I_{j}^{(1)})\geq 0.
\]
The local quiver $\Sigma$ is
a quiver with vertices $a_{1},\dots, a_{t}$ corresponding to the
summands $I_{1}^{(1)}, \dots, I_{t}^{(1)}$, and
$\delta_{ij} - \langle I_{i}^{(1)}, I_{j}^{(1)}\rangle$
arrows from $a_{i}$ to $a_{j}$. We set
$\underline{\gamma} = (m_{1}^{(1)}, \dots, m_{t}^{(1)})$.
Then we have the following formula for the slice representation
$(GL(\underline{n})_{A_{1}}, W_{1})$.

\begin{lemma}[Shmelkin~{\cite[formula~(9)]{Shmelkin}}]
 \label{lemma:SchmelkinStrTh}
\begin{align*}
 (GL(\underline{n})_{A_{1}}, W_{1})&\cong
 (GL(\underline{\gamma}), \Rep(\Sigma, \underline{\gamma})) \\
 &\cong
 \left(
 \prod_{i=1}^{t} GL(m_{i}^{(1)}), \;
 \bigoplus_{1\leq i,j\leq t}
 \Ext_{Q}(I_{i}^{(1)}, I_{j}^{(1)})\otimes
 M(m_{j}^{(1)}, m_{i}^{(1)})
 \right).
\end{align*} 
\end{lemma}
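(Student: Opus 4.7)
The plan is to deduce the lemma from the two already-established identifications $\mathfrak{gl}(\underline{n})_{A_{1}}\cong \Hom_{Q}(A_{1},A_{1})$ and $W_{1}\cong \Ext_{Q}(A_{1},A_{1})$, combined with the additivity of $\Hom_{Q}$ and $\Ext_{Q}$ under direct sums and the locally semi-simple hypothesis \eqref{form:HomInLocallySemisimple}. First, substituting the indecomposable decomposition $A_{1}=\bigoplus_{i=1}^{t} m_{i}^{(1)}I_{i}^{(1)}$ into the map $d_{A_{1},A_{1}}$ of \eqref{form:DefOfDAB} and using the bifunctoriality of kernel and cokernel, one obtains canonical vector-space isomorphisms
\[
 \Hom_{Q}(A_{1},A_{1})\cong \bigoplus_{i,j}\Hom_{Q}(I_{i}^{(1)},I_{j}^{(1)})\otimes M(m_{j}^{(1)},m_{i}^{(1)}),
 \quad
 \Ext_{Q}(A_{1},A_{1})\cong \bigoplus_{i,j}\Ext_{Q}(I_{i}^{(1)},I_{j}^{(1)})\otimes M(m_{j}^{(1)},m_{i}^{(1)}).
\]

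Next I would exploit \eqref{form:HomInLocallySemisimple} to collapse the first sum: because $\Hom_{Q}(I_{i}^{(1)},I_{j}^{(1)})$ is $\C$ when $i=j$ and $0$ otherwise, the first isomorphism reduces to $\Hom_{Q}(A_{1},A_{1})\cong \bigoplus_{i=1}^{t}\mathfrak{gl}(m_{i}^{(1)})=\mathfrak{gl}(\underline{\gamma})$. Exponentiating (and passing to the connected component, which is what is relevant for prehomogeneity) gives $GL(\underline{n})_{A_{1}}^{\circ}\cong \prod_{i=1}^{t}GL(m_{i}^{(1)})=GL(\underline{\gamma})$. On the Ext side, by the very definition of the local quiver $\Sigma$, the dimension of $\Ext_{Q}(I_{i}^{(1)},I_{j}^{(1)})$ is exactly the number of arrows from $a_{i}$ to $a_{j}$ in $\Sigma$, so that $\bigoplus_{i,j}\Ext_{Q}(I_{i}^{(1)},I_{j}^{(1)})\otimes M(m_{j}^{(1)},m_{i}^{(1)})$ is canonically $\Rep(\Sigma,\underline{\gamma})$ as a vector space.

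What remains is to match the group actions, and this is the main technical point. An element $(g_{i})\in \prod_{i}GL(m_{i}^{(1)})$ sits inside $GL(\underline{n})_{A_{1}}$ via the identification of its Lie algebra with $\Hom_{Q}(A_{1},A_{1})$, so it acts on $L_{k}=\bigoplus_{i}(I_{i}^{(1)})_{k}\otimes \C^{m_{i}^{(1)}}$ as the identity on the first tensor factor and as $g_{i}$ on the second. Under this action, the conjugation on $\bigoplus_{a}\Hom(L_{t(a)},L_{h(a)})$, restricted to the slice $W_{1}\cong \Ext_{Q}(A_{1},A_{1})$, respects the tensor decomposition and acts on the factor $\Ext_{Q}(I_{i}^{(1)},I_{j}^{(1)})\otimes M(m_{j}^{(1)},m_{i}^{(1)})$ trivially on the Ext factor and by $Y\mapsto g_{j}Yg_{i}^{-1}$ on the matrix factor. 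This is precisely the natural action of $GL(\underline{\gamma})$ on $\Rep(\Sigma,\underline{\gamma})$.

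The main obstacle is the last step, namely carrying out the compatibility check carefully: one has to trace through the Ringel resolution used to identify $\Ext_{Q}(A_{1},A_{1})$ with $\Coker d_{A_{1},A_{1}}$ and verify that the tensor-product decomposition above is not only a vector-space isomorphism but $GL(\underline{n})_{A_{1}}^{\circ}$-equivariant. Once this equivariance is established, the two sides of the asserted isomorphism coincide as prehomogeneous vector spaces, completing the proof.
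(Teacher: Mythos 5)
The paper gives no proof of this lemma at all: it is quoted verbatim from Shmelkin \cite[formula~(9)]{Shmelkin}, so there is no internal argument to compare against. What you have done is reconstruct Shmelkin's proof from ingredients the paper already assembles, and your reconstruction is sound: bi-additivity of $\Hom_{Q}$ and $\Ext_{Q}$ gives $\Hom_{Q}(A_{1},A_{1})\cong\bigoplus_{i,j}\Hom_{Q}(I_{i}^{(1)},I_{j}^{(1)})\otimes M(m_{j}^{(1)},m_{i}^{(1)})$ and its Ext analogue; \eqref{form:HomInLocallySemisimple} collapses the Hom side to $\bigoplus_{i}\mathfrak{gl}(m_{i}^{(1)})$; and the arrow count of the local quiver $\Sigma$ is by definition $\dim\Ext_{Q}(I_{i}^{(1)},I_{j}^{(1)})$, so the Ext side is $\Rep(\Sigma,\underline{\gamma})$ as a vector space. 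Two points can be tightened. First, exponentiating the Lie algebra only yields $GL(\underline{n})_{A_{1}}^{\circ}$, whereas the lemma concerns the full isotropy group; but $g\cdot A_{1}=A_{1}$ means exactly that $g$ is an invertible endomorphism of the representation, so $GL(\underline{n})_{A_{1}}=\mathrm{Aut}_{Q}(A_{1})$ is the unit group of $\mathrm{End}_{Q}(A_{1})\cong\prod_{i}M(m_{i}^{(1)})$, hence already connected and equal to $\prod_{i}GL(m_{i}^{(1)})$ — your hedge about connected components is unnecessary. Second, the equivariance you defer as ``the main technical point'' is more routine than you suggest: since $g\in GL(\underline{n})_{A_{1}}$ fixes $A_{1}$, the map $d_{A_{1},A_{1}}$ of \eqref{form:DefOfDAB} intertwines the natural actions on its source and target, so $\Coker d_{A_{1},A_{1}}=\Ext_{Q}(A_{1},A_{1})$ inherits the action; writing $L_{k}=\bigoplus_{i}(I_{i}^{(1)})_{k}\otimes\C^{m_{i}^{(1)}}$ with $g$ acting as $\bigoplus_{i}\mathrm{id}\otimes g_{i}$ (which exhausts the stabilizer by the Hom computation), $d_{A_{1},A_{1}}$ splits as $\bigoplus_{i,j}d_{I_{i}^{(1)},I_{j}^{(1)}}\otimes\mathrm{id}$, and taking cokernels commutes with finite direct sums and with tensoring by $M(m_{j}^{(1)},m_{i}^{(1)})$, producing precisely the action $Y\mapsto g_{j}Yg_{i}^{-1}$; reductivity of $GL(\underline{n})_{A_{1}}$ then identifies the invariant complement $W_{1}$ with $\Rep(Q,\underline{n})/T_{A_{1}}(\mathcal{O}_{1})\cong\Coker d_{A_{1},A_{1}}$ as modules. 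With these two remarks filled in, your argument is a complete and self-contained proof; what it buys over the paper's bare citation is independence from \cite{Shmelkin}, at the cost of redoing what that reference establishes.
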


Next we recall Schofield's determinantal invariants, since
we need to consider the restrictions of relative invariants to
slice spaces. Take $A\in \Rep(Q, \underline{n})$ and
$B\in \Rep(Q, \underline{m})$ such that
$\left\langle A, B\right\rangle_{Q}=0$.
Then the matrix representing the map $d_{A, B}$ in
\eqref{form:DefOfDAB} is a square matrix, and
\[
 c(A, B):= \det d_{A, B}
\]
is a relative invariant of $(GL(\underline{n})\times
GL(\underline{m}), \, \Rep(Q, \underline{n})\oplus
\Rep(Q, \underline{m}))$. It is easy to see that
$c(A,B)\neq 0$ if and only if $\Hom_{Q}(A, B)=0$, which
is equivalent to $\Ext_{Q}(A, B)=0$. For a
fixed $B$, the restriction of $c$ to
$\Rep(Q, \underline{n})\oplus \{B\}$ gives a relative
invariant $c_{B}$ of $(GL(\underline{n}), \Rep(Q, \underline{n}))$.
Similarly, for a fixed $A$, the restriction of $c$ to
$\{A\}\oplus \Rep(Q, \underline{m})$ gives a relative invariant
$c^{A}$ of $(GL(\underline{m}), \Rep(Q, \underline{m}))$. 
Moreover, we define the {\it right} perpendicular category
$A^{\perp} $ of $A$ to be the full subcategory of representations
$B$ such that $\Hom_{Q}(A, B)=0= \Ext_{Q}(A,B)$. Similarly,
we define the {\it left} perpendicular category ${}^{\perp}A$
as the full subcategory of representations $C$ such that
$\Hom_{Q}(C, A)=0= \Ext_{Q}(C,A)$.
Then we have the following lemma due to
Schofield~\cite{Schofield}.  

\begin{lemma}[\cite{Schofield}]
 Let $Q$ be a quiver with $r$ vertices,  without oriented cycles.
 Assume that $(GL(\underline{n}), \Rep(Q, \underline{n}))$ is
 a prehomogeneous vector space and take a generic point $A_{0}=
 \bigoplus_{i=1}^{p} m_{i}I_{i}$. Here $I_{i}$ is an
 indecomposable representation of $Q$ and $m_{i}$ is the
 multiplicity of $I_{i}$ in $A_{0}$. Then
 $A_{0}^{\perp}$ and ${}^{\perp} A_{0}$ are equivalent to
 categories of representations of $Q^{\perp}$ and ${}^{\perp} Q$,
 respectively, where $Q^{\perp}$ and ${}^{\perp}Q$ are quivers
 with $l:=r-p$ vertices, without oriented cycles.
\end{lemma}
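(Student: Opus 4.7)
The plan is to reduce the statement to Schofield's general theorem on perpendicular categories of exceptional sequences. First I would verify that the indecomposable summands $I_{1},\dots, I_{p}$ appearing in the generic decomposition $A_{0}=\bigoplus_{i=1}^{p} m_{i} I_{i}$ form an exceptional sequence in $\Rep(Q)$. The orthogonality relation $\dim \Hom_{Q}(I_{i}, I_{j}) = \delta_{ij}$ is exactly \eqref{form:HomInLocallySemisimple} (which holds at any locally semi-simple representation, in particular at the generic point $A_{0}$ by Shmelkin~\cite{Shmelkin}). Ringel's formula \eqref{form:Ringel} then gives $\langle I_{i}, I_{i}\rangle_{Q}=1-\dim \Ext_{Q}(I_{i}, I_{i})$, and rigidity of each $I_{i}$ at the generic orbit forces $\Ext_{Q}(I_{i}, I_{i})=0$. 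After reordering $I_{1},\dots, I_{p}$ so that $\Ext_{Q}(I_{j}, I_{i})=0$ whenever $j>i$ (possible because the partial order induced by orbit closures on the indecomposables is consistent with the Ext-quiver), the collection $(I_{1},\dots, I_{p})$ becomes an exceptional sequence of length $p$ in the hereditary abelian category $\Rep(Q)$.

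Next I would apply Schofield's theorem on perpendicular categories: in a hereditary abelian $\C$-linear category whose Grothendieck group has rank $r$, the right and left perpendicular categories of an exceptional sequence of length $p$ are themselves hereditary abelian, closed under kernels, cokernels and extensions in the ambient category, and possess enough projectives. To identify the rank of the Grothendieck group of $A_{0}^{\perp}$ (and dually of ${}^{\perp}A_{0}$) one uses the non-degeneracy of the Euler form \eqref{form:EulerForm} on $K_{0}(\Rep(Q))\otimes \Q\cong \Q^{r}$: the classes $[I_{1}],\dots,[I_{p}]$ span a $p$-dimensional subspace, and the image of $K_{0}(A_{0}^{\perp})$ is exactly its Euler-orthogonal complement, which has rank $r-p$. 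Finally, any hereditary abelian $\C$-linear category with finitely many simples, finite-dimensional $\Hom$ and $\Ext^{1}$, and enough projectives is equivalent to $\Rep(Q')$ for a uniquely determined quiver $Q'$ whose vertices are indexed by the simples and whose arrows from $s$ to $s'$ are in bijection with a basis of $\Ext^{1}(s, s')$. This yields the quivers $Q^{\perp}$ and ${}^{\perp}Q$ on $r-p$ vertices; absence of oriented cycles in either is forced by hereditariness and the finite-dimensionality of all endomorphism algebras.

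The main obstacle is the perpendicular-category construction itself: one must prove that $A_{0}^{\perp}$ is closed under kernels, cokernels and extensions inside $\Rep(Q)$, and then exhibit an explicit set of projective generators of the right cardinality $r-p$. Both of these are standard but technical homological arguments in Schofield's framework, and I would carry them out following \cite{Schofield} rather than reproving them here; everything else in the lemma (the exceptionality of the generic summands, the rank count, and the passage from a hereditary category with finitely many simples back to a quiver) is essentially formal once that theorem is in place.
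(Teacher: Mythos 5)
The paper itself offers no proof of this lemma: it is imported verbatim from Schofield~\cite{Schofield}, so there is no internal argument to compare against, and your sketch is best judged as a reconstruction of Schofield's perpendicular-category proof. In outline it is a fair one — rigid generic summands forming an exceptional sequence, the Geigle--Lenzing/Schofield theorem that the perpendicular category of an exceptional sequence of length $p$ in a hereditary category is again hereditary with the complementary number of simples, the rank count via non-degeneracy of the Euler form \eqref{form:EulerForm}, and the recognition of such a category as $\Rep(Q')$ for a quiver without oriented cycles.

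However, your first step contains a genuine error. The orthogonality $\dim\Hom_{Q}(I_{i},I_{j})=\delta_{ij}$ of \eqref{form:HomInLocallySemisimple} is Shmelkin's statement for \emph{locally semi-simple} representations, i.e.\ those whose orbit is \emph{closed} in the non-vanishing locus of a semi-invariant; the generic point $A_{0}$ of a prehomogeneous $\Rep(Q,\underline{n})$ is in general not of this kind, and the $\delta_{ij}$ relation fails for it. Concretely, for $Q:1\to 2$ with $\underline{n}=(2,1)$ one has $A_{0}\cong I_{12}\oplus I_{11}$, there are no nonconstant semi-invariants (so the dense orbit is not closed in any set $\{f\neq 0\}$), and $\Hom_{Q}(I_{12},I_{11})\cong\C\neq 0$. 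What actually holds at the generic point, and what the argument needs, is rigidity: since the orbit is dense, $\Ext_{Q}(A_{0},A_{0})\cong\Coker d_{A_{0},A_{0}}=0$, whence each $I_{i}$ is an exceptional brick ($\End_{Q}(I_{i})=\C$ by the Happel--Ringel lemma, since an indecomposable without self-extensions over a hereditary category is a brick), and the vanishing of $\Ext$ in \emph{both} directions between distinct summands rules out $\Hom$-cycles among them (again by Happel--Ringel, nonzero maps $I_{i}\to I_{j}\to I_{i}$ between non-isomorphic bricks with no extensions either way compose to zero in both orders, forcing one of them to vanish). This, rather than your vague appeal to ``the partial order induced by orbit closures,'' is what permits ordering $I_{1},\dots,I_{p}$ into an exceptional sequence; it also supplies the unitriangular Gram matrix $\bigl(\langle I_{i},I_{j}\rangle_{Q}\bigr)$ needed for the linear independence of $[I_{1}],\dots,[I_{p}]$ that your rank count $l=r-p$ tacitly assumes. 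With that repair, the remainder of your sketch correctly defers the closure properties of $A_{0}^{\perp}$ and the construction of $r-p$ projective generators to \cite{Schofield}, exactly as the paper does.
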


The above lemma implies that both $A_{0}^{\perp}$ and
${}^{\perp}A_{0}$ contain exactly $l=r-p$ simple objects.
Let $T_{1},\dots, T_{l}$ 
be the simple objects in $A_{0}^{\perp}$.

\begin{lemma}[\cite{Schofield}]
 Keep the notation as above. Then $c_{T_{1}},\dots, c_{T_{l}}$
 are the fundamental relative invariants of
 $(GL(\underline{n}), \Rep(Q, \underline{n}))$. 
\end{lemma}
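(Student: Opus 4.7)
The plan is to establish the lemma in three main stages: verify each $c_{T_i}$ is a non-zero relative invariant, count the fundamental relative invariants, and then identify the two collections.

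First, I would verify each $c_{T_i}$ is a non-zero relative invariant. By the construction of $c_B$ recalled just before the statement, $c_{T_i}$ is a relative invariant of $(GL(\underline{n}), \Rep(Q,\underline{n}))$ by construction. For non-vanishing I would evaluate at the generic point $A_0$: since $T_i\in A_0^{\perp}$, we have $\Hom_{Q}(A_0,T_i)=0=\Ext_{Q}(A_0,T_i)$, so Ringel's formula~\eqref{form:Ringel} yields $\langle A_0,T_i\rangle_{Q}=0$, making $d_{A_0,T_i}$ a square map with trivial kernel and cokernel, hence an isomorphism. Therefore $c_{T_i}(A_0)=\det d_{A_0,T_i}\ne 0$.

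Next I would count the number of fundamental relative invariants using Sato-Kimura theory: it equals the rank of $X(GL(\underline{n}))/X(GL(\underline{n})_{A_0}^{\circ})$, where $X$ denotes the rational character group. The group $X(GL(\underline{n}))$ has rank $r$, generated by the determinants at each vertex, while Shmelkin's identity~\eqref{form:HomInLocallySemisimple} together with Lemma~\ref{lemma:SchmelkinStrTh} forces the identity component of the isotropy subgroup of $A_0$ to be isomorphic to $\prod_{i=1}^{p}GL(m_{i})$, whose character group has rank $p$. Hence there are exactly $l=r-p$ fundamental relative invariants, matching the number of simple objects in $A_0^{\perp}$.

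Third, I would establish linear independence of the characters of $c_{T_1},\dots,c_{T_l}$ by a direct computation: substituting $\phi_i'=\phi_i g_i$ in $d_{g\cdot A,B}$ shows $d_{g\cdot A,B}(\phi)_a = d_{A,B}(\phi')_a g_{t(a)}^{-1}$, from which the character of $c_B$ on $GL(\underline{n})$ is a linear function of the dimension vector of $B$. Since $T_1,\dots,T_l$ are the simples of the category $A_0^{\perp}$, which is equivalent to representations of a quiver with $l$ vertices, their dimension vectors form a basis of the image of $K_0(A_0^{\perp})$ in $\Z^{Q_0}$ and are in particular linearly independent. Consequently the characters of the $c_{T_i}$ are linearly independent. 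Writing each $c_{T_i}=c_i\prod_j f_j^{n_{ij}}$ with $n_{ij}\in\Z_{\geq 0}$ (which is possible since every relative invariant is, up to constant, a monomial in the fundamentals), linear independence forces the integer matrix $(n_{ij})$ to be invertible over $\Q$.

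The main obstacle will be upgrading invertibility of $(n_{ij})$ to a permutation matrix, i.e., showing each $c_{T_i}$ is (up to scalar) a single fundamental relative invariant rather than a non-trivial product. For this I would analyze the zero locus of $c_{T_i}$, which coincides with the closure of $\{A\in\Rep(Q,\underline{n}):\Hom_{Q}(A,T_i)\ne 0\}$; the simplicity of $T_i$ in $A_0^{\perp}$ should make this an irreducible codimension-one subvariety on which $c_{T_i}$ vanishes to order exactly one (the generic jump of $\dim\Hom_{Q}(A,T_i)$ across the divisor being $1$). Since $c_{T_i}$ is a non-negative integer combination of the $l$ fundamental divisors but supports only one of them with multiplicity $1$, the matrix $(n_{ij})$ is a permutation, completing the identification of $\{c_{T_1},\dots,c_{T_l}\}$ with the fundamental relative invariants.
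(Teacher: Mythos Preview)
The paper does not give its own proof of this lemma; it is simply quoted from Schofield~\cite{Schofield} and used as a black box. There is therefore nothing in the paper to compare your argument against.

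Your outline is essentially the strategy of Schofield's original proof, and the first three stages are in good shape. Two points deserve tightening. In your second stage you invoke \eqref{form:HomInLocallySemisimple} and Lemma~\ref{lemma:SchmelkinStrTh} for the generic point $A_0$; these are stated for locally semi-simple representations, so you should remark that the open orbit is closed in the complement of the singular set (or that the generic isotropy is reductive by Matsushima), so $A_0$ is indeed locally semi-simple. You also need that the restriction $X(GL(\underline{n}))\to X(GL(\underline{n})_{A_0})$ has full rank $p$, which amounts to the dimension vectors of $I_1,\dots,I_p$ being linearly independent in $\Z^{Q_0}$; this follows from Schofield's perpendicular-category theorem but is not automatic.

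The real content, as you correctly identify, is your final step: showing that the divisor $\{c_{T_i}=0\}$ is irreducible and that $c_{T_i}$ vanishes on it to order exactly one. You describe what needs to happen but do not carry it out. Schofield's argument here is not a soft appeal to simplicity of $T_i$; he uses the equivalence $A_0^{\perp}\simeq \Rep(Q^{\perp})$ to show that the generic representation with $\Hom_Q(A,T_i)\neq 0$ has $\dim\Hom_Q(A,T_i)=1$, and relates this one-dimensional Hom to a single codimension-one degeneration of $A_0$. Without this, an invertible $(n_{ij})$ over $\Q$ does not by itself force a permutation matrix (e.g.\ $\begin{smallmatrix}1&1\\0&1\end{smallmatrix}$ is invertible with non-negative entries). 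So the gap in your sketch is precisely the step you flagged: you still owe the irreducibility/multiplicity-one argument, and that is where the work lies.
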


\begin{remark}
 Recall that in the case of quivers of type $A$, the fundamental
 relative invariants are parametrized by the set
 $I_{\underline{n}}(Q)$ defined in
 Section~\ref{section:RelativeInvariants}. 
 The enumeration of 
 $I_{\underline{n}}(Q)$ is, of course, equivalent to the calculation of
 the simple objects $T_{1},\dots, T_{l}$.
 In \cite{ShmelkinJA}, Shmelkin gives an algorithm for the
 calculation of those simple objects, 
 and implemented the algorithm on 
 a computer program TETIVA~\cite{TETIVA}.
 By using TETIVA, one can also calculate the indecomposable decomposition
 $A_{0}= \bigoplus_{i=1}^{p} m_{i}I_{i}$ of a generic point of
 $A_{0}$, and so on. 
\end{remark}

\begin{lemma}
 \label{lemma:Restriction}
 Let $Q$ be a quiver without oriented cycles, 
 $(GL(\underline{n}), \Rep(Q,\underline{n}))$ 
 a triplet arising from quiver representations which is not
 necessarily a prehomogeneous vector space, and
 $f$ a relatively invariant polynomial of
  $(GL(\underline{n}), \Rep(Q,\underline{n}))$.
 Take an element $A_{1}$
 of $\Rep(Q,\underline{n})$ such that $GL(\underline{n})A_{1}$
 is closed in $\{A\in \Rep(Q,\underline{n})\, ;\,
 f(A)\neq 0\}$ and put $W_{1} = \Ext_{Q}(A_{1}, A_{1})$.
 Then the function $f_{A_{1}}$ on $W_{1}$ defined by
 \[
  f_{A_{1}}(w):= f(A_{1}+w)\qquad (w\in W_{1})
 \]
 is a constant function.
\end{lemma}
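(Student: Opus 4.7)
My plan is to combine Matsushima's criterion, Shmelkin's description of the slice representation (Lemma~\ref{lemma:SchmelkinStrTh}), Luna's étale slice theorem, and the Hilbert--Mumford criterion. First, since $GL(\underline{n})A_1$ is closed in the affine variety $\{f\neq 0\}$, Matsushima's theorem implies that the stabilizer $GL(\underline{n})_{A_1}$ is reductive. The tangent space $T_{A_1}(GL(\underline{n})A_1)$ equals $\mathrm{Im}(d_{A_1,A_1})$, and I would fix a $GL(\underline{n})_{A_1}$-invariant complement $W_1\subset \Rep(Q,\underline{n})$; the identification $W_1\cong \mathrm{Coker}(d_{A_1,A_1})=\Ext_Q(A_1,A_1)$ is then canonical as a $GL(\underline{n})_{A_1}$-module.

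Next, since $gA_1=A_1$ for $g\in GL(\underline{n})_{A_1}$, the relation $\chi(g)f(A_1)=f(gA_1)=f(A_1)$ together with $f(A_1)\neq 0$ forces the character $\chi$ of $f$ to restrict trivially to $GL(\underline{n})_{A_1}$; accordingly,
\[
f_{A_1}(gw)=f(g(A_1+w))=\chi(g)f(A_1+w)=f_{A_1}(w)
\]
for all $g\in GL(\underline{n})_{A_1}$ and $w\in W_1$, so $f_{A_1}$ is an absolute $GL(\underline{n})_{A_1}$-invariant polynomial on $W_1$ with $f_{A_1}(0)=f(A_1)\neq 0$. Luna's étale slice theorem applied to $\{f\neq 0\}$ at the closed orbit $GL(\underline{n})A_1$ then provides a $GL(\underline{n})_{A_1}$-invariant Euclidean neighborhood $V$ of $0$ in $W_1$ with the property that, for every $w\in V$, the orbit closure $\overline{GL(\underline{n})(A_1+w)}$ meets $GL(\underline{n})A_1$. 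By Hilbert--Mumford, each such $w$ admits a one-parameter subgroup $\lambda\colon \C^{\times}\to GL(\underline{n})$ with $\lim_{t\to 0}\lambda(t)(A_1+w)=A_1$ (after $GL(\underline{n})$-conjugation); the equality $\lim_{t\to 0}\lambda(t)A_1=A_1$ forces $\lambda\subset GL(\underline{n})_{A_1}$, and hence $\lim_{t\to 0}\lambda(t)w=0$. Invariance of $f_{A_1}$ together with continuity then gives $f_{A_1}(w)=\lim_{t\to 0}f_{A_1}(\lambda(t)w)=f_{A_1}(0)=f(A_1)$ for every $w\in V$, and since $V$ is a non-empty open subset of the affine space $W_1$ on which the polynomial $f_{A_1}-f(A_1)$ vanishes, this identity extends to the whole of $W_1$.

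The main obstacle is justifying the existence of such a $V$: namely, that for $w$ near $0$ one really does have $A_1\in \overline{GL(\underline{n})(A_1+w)}$, equivalently, $0\in\overline{GL(\underline{n})_{A_1}\cdot w}$. I would handle this via Shmelkin's identification of the slice representation with the local-quiver representation $(GL(\underline{\gamma}),\Rep(\Sigma,\underline{\gamma}))$: for $Q$ without oriented cycles and $A_1$ locally semi-simple, the Hom-orthogonality \eqref{form:HomInLocallySemisimple} together with the structure of the Euler form rules out oriented cycles in $\Sigma$, so $\Rep(\Sigma,\underline{\gamma})$ admits a central one-parameter subgroup of $GL(\underline{\gamma})$ with strictly positive weights that contracts every point to $0$. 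Pulling this one-parameter subgroup back into $GL(\underline{n})_{A_1}$ provides the required contracting $\lambda$ for \emph{every} $w\in W_1$, which both settles the delicate step above and, in fact, gives the conclusion directly without having to invoke Hilbert--Mumford case-by-case.
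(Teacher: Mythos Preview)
Your argument hinges on the local quiver $\Sigma$ having no oriented cycles, but this fails for general acyclic $Q$. Take the Kronecker quiver $Q\colon 1\rightrightarrows 2$ with $\underline{n}=(1,1)$ and the semi-invariant $f(a,b)=a$. The point $A_1=(1,0)$ is indecomposable with orbit $\{(a,0):a\neq0\}$ closed in $\{a\neq0\}$; here $\langle A_1,A_1\rangle_Q=0$ while $\dim\Hom_Q(A_1,A_1)=1$, so $\dim\Ext_Q(A_1,A_1)=1$ and $\Sigma$ is a single vertex with a loop. The stabilizer $GL(\underline{n})_{A_1}=\{(t,t):t\in\C^\times\}$ then acts \emph{trivially} on $W_1=\{(0,b):b\in\C\}$, so no one-parameter subgroup contracts $W_1$ to $0$. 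Your Luna claim also fails here: for each $b\neq0$ the orbit of $(1,b)$ is already closed in $\{a\neq0\}$ and does not contain $A_1$ in its closure. The Hom-orthogonality \eqref{form:HomInLocallySemisimple} by itself does not prevent cycles in $\Sigma$; for non-Dynkin $Q$ the slice invariant ring $\C[W_1]^{GL(\underline{n})_{A_1}}$ can be strictly larger than $\C$, so knowing only that $f_{A_1}$ is an absolute invariant with $f_{A_1}(0)\neq0$ is insufficient.

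The paper's proof uses a genuinely different idea specific to quiver semi-invariants. By Derksen--Weyman one may take $f=c_B$ for some representation $B$; Shmelkin's description of locally semi-simple representations then forces $A_1$ to be a direct sum of simple objects of the perpendicular category ${}^\perp B$. Since perpendicular categories are closed under extensions, every $A_1+w$ with $w\in W_1=\Ext_Q(A_1,A_1)$ again lies in ${}^\perp B$, whence $d_{A_1+w,B}$ is an isomorphism and $f_{A_1}(w)=c_B(A_1+w)\neq0$ for all $w\in W_1$. A nowhere-vanishing polynomial on an affine space is a nonzero constant. The essential input missing from your approach is precisely this identification of $f$ with a Schofield determinant, which forces $f_{A_1}$ to avoid zero entirely rather than merely to be invariant.
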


\begin{proof}
 By Derksen-Weyman~\cite[Theorem~1]{DerksenWeymanJAMS},
 we have $f=c_{B}$ for some
 $B\in \Rep(Q,\underline{m})$. Then, by
 Shmelkin~\cite[Theorem~11]{Shmelkin}, $A_{1}$ is a direct sum of
 simple objects in ${}^{\perp}B$. Since the perpendicular categories
 are closed under direct sums and extensions, all the elements
 in $A_{1}+W_{1} = A_{1}+\Ext_{Q}(A_{1},A_{1})$ belongs to
 ${}^{\perp}B$. By the definition of $c_{B}$ and ${}^{\perp}B$,
 we observe that $c_{B}$ does not vanish on $A_{1}+W_{1}$, 
 and this proves the lemma. 
\end{proof}

Now let us return to our case; we assume that $Q$ is
a quiver of type $A_{r}$.
We keep the notation in Sections~\ref{section:RelativeInvariants}--%
\ref{section:RepThOfQuiver}.
Let $\mathcal{O}^{(p,q)}$ be the  closed orbit in 
$\{A\in \Rep(Q, \underline{n})\, ;\,
f_{(p,q)}(A)\neq 0\}$, and take an element $A^{(p,q)}$ of
$\mathcal{O}^{(p,q)}$. We denote by
$GL(\underline{n})_{A^{(p,q)}}$ the isotropy subgroup of
$GL(\underline{n})$ at $A^{(p,q)}$, and by $W^{(p,q)}$
a $GL(\underline{n})_{A^{(p,q)}}$-invariant subspace of
$\Rep(Q,\underline{n})$
satisfying
\begin{equation}
\label{form:DefOfWpq}
 \Rep(Q, \underline{n}) = T_{A^{(p,q)}}(\mathcal{O}^{(p,q)})
 \oplus W^{(p,q)}. 
\end{equation}
Let
\[
 A^{(p,q)} \cong \bigoplus_{1\leq i\leq j\leq r} m_{ij}^{(p,q)}
 I_{ij}
\]
be the indecomposable decomposition of $A^{(p, q)}$, and set
\[
 \mathcal{C}^{(p,q)} =
 \left\{ [i, j] \;;\; 1\leq i\leq j\leq r, \,
 m_{ij}^{(p,q)}\neq 0
 \right\}.
\]

\begin{lemma}
 \label{lemma:HomAndExt}
 For $[i, j], [k, l]\in \mathcal{C}^{(p,q)}$, we have
 \[
   \Ext_{Q}(I_{ij}, I_{kl}) \cong 
 \left\{
 \begin{array}{ccl}
  \C & &
  \text{if there exists}\,\; a\in Q_{1}\;\,
   \text{such that}\, \; t(a)= j, h(a)=k
    \\
  \C & &
  \text{if there exists}\,\; a\in Q_{1}\;\,
   \text{such that}\, \; t(a)= i, h(a)=l
   \\
  0 & & \text{otherwise}
 \end{array}.
 \right.  
 \]      
\end{lemma}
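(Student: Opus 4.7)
The plan is to combine Ringel's formula with a direct combinatorial computation of the Euler form for type $A$ quivers, and then exploit the specific shape of the intervals that occur in $\mathcal{C}^{(p,q)}$ via the exact lace diagram construction from Section~\ref{section:RepThOfQuiver}.

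First I would apply Ringel's formula \eqref{form:Ringel} together with the Hom computation \eqref{form:HomInLocallySemisimple} (which applies precisely because the $I_{ij}$ with $[i,j] \in \mathcal{C}^{(p,q)}$ are the indecomposable summands of the locally semi-simple representation $A^{(p,q)}$) to obtain the identity
\[
  \dim \Ext_Q(I_{ij}, I_{kl}) = \delta_{(i,j),(k,l)} - \langle I_{ij}, I_{kl}\rangle_Q.
\]
Next, using the defining formula \eqref{form:EulerForm} and the fact that the dimension vector of the interval module $I_{ij}$ equals $1$ on vertices in $[i,j]$ and $0$ elsewhere, I would derive
\[
  \langle I_{ij}, I_{kl}\rangle_Q
  \;=\; \bigl|[i,j]\cap [k,l]\bigr|
  \;-\; \#\bigl\{ a\in Q_{1} \,\big|\, t(a)\in [i,j],\; h(a)\in [k,l]\bigr\}.
\]
A direct count then yields $\langle I_{ij}, I_{ij}\rangle_Q = (j-i+1)-(j-i) = 1$, so the diagonal case $(i,j)=(k,l)$ gives $\dim\Ext = 0$, as required (and neither of the two displayed arrow-conditions can hold).

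For the off-diagonal case $(i,j)\neq (k,l)$ I have $\dim \Ext_Q(I_{ij}, I_{kl}) = -\langle I_{ij}, I_{kl}\rangle_Q$. Since in type $A_r$ every arrow $a\in Q_1$ satisfies $\{t(a),h(a)\}=\{a,a+1\}$, the arrow-term in the Euler form is a sum of at most two local contributions, each of which comes from an arrow joining an endpoint of $[i,j]$ to an endpoint of $[k,l]$. A careful bookkeeping of these endpoint contributions, weighed against the cardinality of $[i,j]\cap [k,l]$, shows that a negative value of $\langle I_{ij}, I_{kl}\rangle_Q$ can only arise in one of the two configurations listed in the lemma, and in each of those configurations contributes exactly $-1$.

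The main obstacle, as I see it, is to rule out spurious contributions and to ensure that the two stated conditions cannot hold simultaneously for two intervals both lying in $\mathcal{C}^{(p,q)}$. This is where the hypothesis $[i,j],[k,l]\in \mathcal{C}^{(p,q)}$ is essential: the exact lace diagram associated to $\mathcal{O}^{(p,q)}$ constructed in Section~\ref{section:RepThOfQuiver} severely restricts which intervals can coexist as summands of $A^{(p,q)}$, so the "doubly-linked" configuration (where both $t(a)=j,h(a)=k$ and $t(a')=i,h(a')=l$ hold) is excluded, as is any configuration where the intersection/arrow balance would give $\langle I_{ij}, I_{kl}\rangle_Q \leq -2$. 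Carrying out this exclusion by reading off the admissible shapes of intervals from the exact lace diagram is where the real work lies.
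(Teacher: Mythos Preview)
Your proposal is correct and follows essentially the same approach as the paper: compute the Euler form $\langle I_{ij}, I_{kl}\rangle_Q$ combinatorially for interval modules, use the shape of the exact lace diagram for $\mathcal{O}^{(p,q)}$ to restrict which configurations of intervals can coexist in $\mathcal{C}^{(p,q)}$, and then combine Ringel's formula \eqref{form:Ringel} with \eqref{form:HomInLocallySemisimple} to read off $\dim\Ext_Q(I_{ij},I_{kl})$. The paper is simply terser about the endpoint bookkeeping that you spell out, and it also remarks that the same result follows from \cite[Lemma~3]{BR}.
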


\begin{proof}
%
 By considering the shape of exact lace diagrams and 
 the definition~\eqref{form:EulerForm},
 we observe that for $[i, j], [k, l]\in
 \mathcal{C}^{(p,q)}$,
 \[
 \left\langle I_{ij}, \, I_{kl}\right\rangle_{Q} =
 \left\{
 \begin{array}{ccl}
  1& & \text{if} \; [i, j] = [k, l]\\
  -1 & &
  \text{if there exists}\,\; a\in Q_{1}\;\,
   \text{such that}\, \; t(a)= j, h(a)=k \\
  -1 & &
 \text{if there exists}\,\; a\in Q_{1}\;\,
   \text{such that}\, \; t(a)= i, h(a)=l \\
  0 & & \text{otherwise}
 \end{array}.
 \right.
 \]
 Combining this with \eqref{form:Ringel} and
 \eqref{form:HomInLocallySemisimple},  we obtain
 the lemma. 
 We also note that the lemma can be proved by using 
 \cite[Lemma~3]{BR}. 
\end{proof}
By Lemmas~\ref{lemma:SchmelkinStrTh}
and \ref{lemma:HomAndExt}, we obtain the following lemma.

\begin{lemma}
 \label{lemma:SliceRepresentation}
 Let  $\mathcal{O}^{(p,q)}$ be the closed orbit in
 $\{A\in \Rep(Q, \underline{n})\, ;\, f_{(p,q)}(A)\neq 0\}$.
 Take $A^{(p,q)}\in \mathcal{O}^{(p,q)}$ and let
 $A^{(p,q)} \cong \oplus_{i, j} m_{ij}^{(p,q)}
 I_{ij}$ be the indecomposable decomposition of $A^{(p,q)}$.
 Then  the isotropy subgroup $GL(\underline{n})_{A^{(p,q)}}$ and
 the subspace $W^{(p,q)}$ satisfying \eqref{form:DefOfWpq}
 are given as follows:
 \begin{align*}  
  GL(\underline{n})_{A^{(p,q)}}&\cong 
   \prod_{1\leq i\leq j\leq r} GL (m_{ij}^{(p, q)}), \\
  W^{(p,q)} &\cong 
\bigoplus
 \begin{Sb}
  \exists a\in Q_{1} \;
  \text{s.t.}\\
  t(a) = j, \; h(a) = k
 \end{Sb} M(m_{kl}^{(p,q)}, m_{ij}^{(p,q)})  \oplus
 \bigoplus
 \begin{Sb}
  \exists a\in Q_{1} \;
  \text{s.t.}\\
  t(a) = i, \; h(a) = l
 \end{Sb} M(m_{kl}^{(p,q)}, m_{ij}^{(p,q)}).
 \end{align*}
\end{lemma}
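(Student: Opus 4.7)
The plan is to combine Lemma~\ref{lemma:SchmelkinStrTh} (Shmelkin's structure theorem for slice representations at locally semi-simple points) with the explicit $\Ext$-computation of Lemma~\ref{lemma:HomAndExt}. Since $A^{(p,q)}$ lies in the closed orbit $\mathcal{O}^{(p,q)}$ inside the open set $\{f_{(p,q)}\neq 0\}$, it is a locally semi-simple representation in the sense of Shmelkin, so the isotropy group $GL(\underline{n})_{A^{(p,q)}}$ is reductive and a $GL(\underline{n})_{A^{(p,q)}}$-invariant complement $W^{(p,q)}$ exists.

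First, I would write out the indecomposable decomposition $A^{(p,q)}\cong \bigoplus_{[i,j]\in \mathcal{C}^{(p,q)}} m_{ij}^{(p,q)} I_{ij}$ and set $\underline{\gamma}=(m_{ij}^{(p,q)})_{[i,j]\in \mathcal{C}^{(p,q)}}$. By Lemma~\ref{lemma:SchmelkinStrTh}, we immediately get
\[
 GL(\underline{n})_{A^{(p,q)}}\cong GL(\underline{\gamma})=\prod_{[i,j]\in \mathcal{C}^{(p,q)}} GL(m_{ij}^{(p,q)}),
\]
and since $m_{ij}^{(p,q)}=0$ for $[i,j]\notin\mathcal{C}^{(p,q)}$ we may equally write the product as $\prod_{1\le i\le j\le r} GL(m_{ij}^{(p,q)})$, which gives the first assertion.

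Next, again by Lemma~\ref{lemma:SchmelkinStrTh}, the slice representation is
\[
 W^{(p,q)}\cong \bigoplus_{[i,j],[k,l]\in\mathcal{C}^{(p,q)}} \Ext_{Q}(I_{ij},I_{kl})\otimes M(m_{kl}^{(p,q)},m_{ij}^{(p,q)}).
\]
At this stage I would invoke Lemma~\ref{lemma:HomAndExt}, which tells us that $\Ext_{Q}(I_{ij},I_{kl})\cong \C$ exactly when there exists $a\in Q_{1}$ with $(t(a),h(a))=(j,k)$ or $(t(a),h(a))=(i,l)$, and vanishes otherwise. Substituting this into the direct sum collapses each nonzero summand to a single copy of $M(m_{kl}^{(p,q)},m_{ij}^{(p,q)})$, yielding exactly the two sums displayed in the statement. (The two cases in Lemma~\ref{lemma:HomAndExt} are genuinely distinct for pairs with $[i,j]\ne[k,l]$ in a type $A$ quiver, so no double-counting occurs; for $[i,j]=[k,l]$ only the diagonal case occurs and $\Ext_{Q}(I_{ij},I_{ij})=0$ by Lemma~\ref{lemma:HomAndExt}, which is consistent with the index conditions in the displayed sums.)

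The only subtlety I anticipate is bookkeeping: one must verify that the two index sets in the direct sum of $W^{(p,q)}$ do not overlap and that all pairs $[i,j],[k,l]$ with nonzero $\Ext$ arise from one of the two stated conditions. This is handled by inspecting the intervals: for type $A_{r}$, an arrow $a\in Q_{1}$ with $t(a)=j,h(a)=k$ corresponds to the case where the intervals $[i,j]$ and $[k,l]$ abut across $a$ with $[k,l]$ ``to the right (or left, depending on orientation) of'' $[i,j]$, and symmetrically for the other condition. These two cases are mutually exclusive whenever $[i,j]\ne[k,l]$, so the two direct sums in the statement are disjoint, and together they exhaust the nonzero $\Ext$ contributions listed in Lemma~\ref{lemma:HomAndExt}. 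This completes the proof.
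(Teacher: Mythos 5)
Your proposal is correct and takes essentially the same route as the paper: the paper obtains Lemma~\ref{lemma:SliceRepresentation} in one stroke by combining Shmelkin's structure theorem (Lemma~\ref{lemma:SchmelkinStrTh}) with the $\Ext$-computation of Lemma~\ref{lemma:HomAndExt}, exactly as you do. Your additional bookkeeping on the disjointness of the two index sets and the vanishing of $\Ext_{Q}(I_{ij},I_{ij})$ is a harmless elaboration of details the paper leaves implicit.
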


\begin{example}
 \label{example:SliceRepresentation}
 Lemma~\ref{lemma:SliceRepresentation} tells us that 
 the structures of slice representations emerge from the
 exact lace diagrams. First let us consider the locally closed
 orbit $\mathcal{O}^{(3, 4)}$ in Example~\ref{exmp:FirstExampleIV}.
 Then, in view of Lemma~\ref{lemma:SliceRepresentation},
 we draw pictures like Figure~\ref{figure:SliceRepn1}
 and observe that
 \begin{align*}
  GL(\underline{n})_{A^{(3,4)}}&\cong GL(2)\times GL(5)\times
  GL(6)\times GL(2), \\
  W^{(3,4)}&\cong M(5,2)\oplus M(6, 5)\oplus M(2, 6).
 \end{align*}
 Here the lines ({\it not} arrows!) in the right picture denote
 the basis of $W^{(3,4)}$. The action of
 $GL(\underline{n})_{A^{(3,4)}}$ on $W^{(3,4)}$ is inherited from
 that of $(GL(\underline{n}), \Rep(Q,\underline{n}))$, and is 
 given by
 \[
  h\cdot w = \left(h_{2}w_{1}^{(3,4)}h_{1}^{-1},
 h_{3} w_{2}^{(3,4)}h_{2}^{-1},
 h_{4}w_{3}^{(3,4)} h_{3}^{-1}\right)
 \]
 for $h=(h_{1},h_{2},h_{3}, h_{4})\in GL(\underline{n})_{A^{(3,4)}}$
 and $w = (w_{1}^{(3,4)},w_{2}^{(3,4)},w_{3}^{(3,4)})\in W^{(3,4)}$. 
\begin{figure}[htbp]
  \begin{center}
  \begin{picture}(100,95)(0,0)
   \put(-40,80){$GL(\underline{n})_{A^{(3,4)}}$}
   \put(7,7){$\bullet$}
   \put(7,22){$\bullet$}
   \put(9,17){\oval(18,35)}
   \put(-7,-15){{\footnotesize $GL(2)$}}
   \put(30,7){$\bullet$}
   \put(30,22){$\bullet$}
   \put(30,37){$\bullet$}
   \put(30,52){$\bullet$}
   \put(30,67){$\bullet$}
   \put(33,40){\oval(18,80)}
   \put(22,-15){{\footnotesize $GL(5)$}}   
   \put(53,7){$\bullet$}
   \put(53,22){$\bullet$}
   \put(53,37){$\bullet$}
   \put(53,52){$\bullet$}
   \put(53,67){$\bullet$}
   \put(53,82){$\bullet$}
   \put(58,10){\vector(1,0){20}}
   \put(58,25){\vector(1,0){20}}
   \put(58,40){\vector(1,0){20}}
   \put(58,55){\vector(1,0){20}}
   \put(58,70){\vector(1,0){20}}
   \put(58,85){\vector(1,0){20}}
   \put(67,47){\oval(35,100)}
   \put(55,-15){{\footnotesize $GL(6)$}}      
   \put(76,7){$\bullet$}
   \put(76,22){$\bullet$}
   \put(76,37){$\bullet$}
   \put(76,52){$\bullet$}
   \put(76,67){$\bullet$}
   \put(76,82){$\bullet$}   
   \put(100,7){$\bullet$}
   \put(100,22){$\bullet$}
   \put(103,17){\oval(18,35)}
   \put(90,-15){{\footnotesize $GL(2)$}}         
  \end{picture}
   \qquad\qquad
   \begin{picture}(100,90)(0,0)
   \put(-20,80){$W^{(3,4)}$}
   \put(7,7){$\bullet$}
   \put(7,22){$\bullet$}
   \qbezier(9,10)(20,10)(30,10)
   \qbezier(10,10)(20,17.5)(30,24)
   \qbezier(10,10)(20,22.5)(30,36)
   \qbezier(10,10)(20,31)(30,52)
   \qbezier(10,10)(20,39)(30,67)
   \qbezier(9,25)(20,25)(30,25)
   \qbezier(9,25)(20,17)(30,10)
   \qbezier(9,25)(20,31)(30,36)
   \qbezier(9,25)(20,38)(30,52)
   \qbezier(9,25)(20,46)(30,67)            
   \put(30,7){$\bullet$}
   \put(30,22){$\bullet$}
   \put(30,37){$\bullet$}
   \put(30,52){$\bullet$}
   \put(30,67){$\bullet$}
   \qbezier(33,10)(43,10)(53,10)
   \qbezier(33,10)(43,17)(53,24)
   \qbezier(33,10)(43,25)(53,39)
   \qbezier(33,10)(43,32)(53,54)
   \qbezier(33,10)(43,39)(53,69)
   \qbezier(33,10)(43,47)(53,84)     
   \qbezier(33,25)(43,17)(53,10)
   \qbezier(33,25)(43,25)(53,25)
   \qbezier(33,25)(43,32)(53,39)
   \qbezier(33,25)(43,39)(53,54)
   \qbezier(33,25)(43,47)(53,69)
   \qbezier(33,25)(43,54)(53,84)    
   \qbezier(33,40)(43,25)(53,10)
   \qbezier(33,40)(43,32)(53,25)
   \qbezier(33,40)(43,40)(53,40)
   \qbezier(33,40)(43,47)(53,54)
   \qbezier(33,40)(43,55)(53,69)
   \qbezier(33,40)(43,64)(53,84)     
   \qbezier(33,55)(43,32)(53,10)
   \qbezier(33,55)(43,40)(53,25)
   \qbezier(33,55)(43,47)(53,40)
   \qbezier(33,55)(43,55)(53,55)
   \qbezier(33,55)(43,62)(53,69)
   \qbezier(33,55)(43,60)(53,84)         
   \qbezier(33,70)(43,40)(53,10)
   \qbezier(33,70)(43,46)(53,25)
   \qbezier(33,70)(43,55)(53,40)
   \qbezier(33,70)(43,63)(53,55)
   \qbezier(33,70)(43,70)(53,70)
   \qbezier(33,70)(43,77)(53,84)         
   \put(53,7){$\bullet$}
   \put(53,22){$\bullet$}
   \put(53,37){$\bullet$}
   \put(53,52){$\bullet$}
   \put(53,67){$\bullet$}
   \put(53,82){$\bullet$}
   \put(58,10){\vector(1,0){20}}
   \put(58,25){\vector(1,0){20}}
   \put(58,40){\vector(1,0){20}}
   \put(58,55){\vector(1,0){20}}
   \put(58,70){\vector(1,0){20}}
   \put(58,85){\vector(1,0){20}}            
   \put(76,7){$\bullet$}
   \put(76,22){$\bullet$}
   \put(76,37){$\bullet$}
   \put(76,52){$\bullet$}
   \put(76,67){$\bullet$}
   \put(76,82){$\bullet$}
   \qbezier(80,10)(90,10)(100,10)
   \qbezier(80,10)(90,17)(100,25)    
   \qbezier(80,25)(90,17)(100,10)
   \qbezier(80,25)(90,25)(100,25)
   \qbezier(80,40)(90,25)(100,10)
   \qbezier(80,40)(90,32)(100,25)    
   \qbezier(80,55)(90,32)(100,10)
   \qbezier(80,55)(90,40)(100,25)    
   \qbezier(80,70)(90,40)(100,10)
   \qbezier(80,70)(90,48)(100,25)    
   \qbezier(80,85)(90,48)(100,10)
   \qbezier(80,85)(90,56)(100,25)    
   \put(100,7){$\bullet$}
   \put(100,22){$\bullet$}
  \put(-3,-15){{\footnotesize $M(5,2)$}}
  \put(35,-15){{\footnotesize $M(6,5)$}}
  \put(75,-15){{\footnotesize $M(2,6)$}}        
  \end{picture}
 \end{center} 
 \caption{Slice representation associated with
 $\mathcal{O}^{(3,4)}$ in Example~\ref{exmp:FirstExampleIV}}
 \label{figure:SliceRepn1}
\end{figure}
  
 Second we consider the locally closed
 orbit $\mathcal{O}^{(1, 5)}$ in Example~\ref{exmp:FirstExampleIV}.
 Then, 
 we draw pictures like Figure~\ref{figure:SliceRepn2}
 and observe that
 \begin{align*}
  GL(\underline{n})_{A^{(1,5)}}&\cong GL(2)\times GL(3)\times
  GL(4)\times GL(4), \\
  W^{(1,5)}&\cong M(4,3)\oplus M(4, 4).
 \end{align*}
 The action of
 $GL(\underline{n})_{A^{(1,5)}}$ on $W^{(1,5)}$ is given by
 \[
  h\cdot w = \left(h_{3}w_{1}^{(1,5)}h_{2}^{-1},
 h_{4} w_{2}^{(1,5)}h_{3}^{-1}\right)
 \]
 for $h=(h_{1},h_{2},h_{3}, h_{4})\in GL(\underline{n})_{A^{(1,5)}}$
 and $w = (w_{1}^{(1,5)},w_{2}^{(1,5)})\in W^{(1,5)}$.  
 \begin{figure}[htbp]
  \begin{center}
  \begin{picture}(100,100)(0,0)
   \put(-45,90){$GL(\underline{n})_{A^{(1,5)}}$}
   \put(56,16){\oval(120,30)}
   \put(33,55){\oval(13,42)}
   \put(55.5,61){\oval(13,56)}
   \put(78.3,61){\oval(13,56)}
   \put(-35,15){{\footnotesize $GL(2)$}}
   \put(-7,50){{\footnotesize $GL(3)$}}
   \put(38,95){{\footnotesize $GL(4)$}}
   \put(70,95){{\footnotesize $GL(4)$}}   
   \put(7,7){$\bullet$}
   \put(7,22){$\bullet$}
   \put(10,10){\vector(1,0){20}}
   \put(10,25){\vector(1,0){20}}
   \put(30,7){$\bullet$}
   \put(30,22){$\bullet$}
   \put(30,37){$\bullet$}
   \put(30,52){$\bullet$}
   \put(30,67){$\bullet$}
   \put(34,10){\vector(1,0){20}}
   \put(34,25){\vector(1,0){20}}
   \put(53,7){$\bullet$}
   \put(53,22){$\bullet$}
   \put(53,37){$\bullet$}
   \put(53,52){$\bullet$}
   \put(53,67){$\bullet$}
   \put(53,82){$\bullet$}
   \put(58,10){\vector(1,0){20}}
   \put(58,25){\vector(1,0){20}}
   \put(76,7){$\bullet$}
   \put(76,22){$\bullet$}
   \put(76,37){$\bullet$}
   \put(76,52){$\bullet$}
   \put(76,67){$\bullet$}
   \put(76,82){$\bullet$}
   \put(82,10){\vector(1,0){20}}
   \put(82,25){\vector(1,0){20}}
   \put(100,7){$\bullet$}
   \put(100,22){$\bullet$}   
  \end{picture}
   \qquad\qquad
   \begin{picture}(100,100)(-10,0)
   \put(-30,90){$W^{(1,5)}$}
   \put(20,93){{\footnotesize $M(4,3)$}}
   \put(60,93){{\footnotesize $M(4,4)$}}      
   \put(7,7){$\bullet$}
   \put(7,22){$\bullet$}
   \put(10,10){\vector(1,0){20}}
   \put(10,25){\vector(1,0){20}}
   \put(30,7){$\bullet$}
   \put(30,22){$\bullet$}
   \put(30,37){$\bullet$}
   \put(30,52){$\bullet$}
   \put(30,67){$\bullet$}
   \qbezier(33,40)(43,40)(53,40)
   \qbezier(33,40)(43,47)(53,55)
   \qbezier(33,40)(43,55)(53,70)
   \qbezier(33,40)(43,62)(53,85)
   \qbezier(33,55)(43,47)(53,40)
   \qbezier(33,55)(43,55)(53,55)
   \qbezier(33,55)(43,62)(53,70)
   \qbezier(33,55)(43,70)(53,85)    
   \qbezier(33,70)(43,55)(53,40)
   \qbezier(33,70)(43,62)(53,55)
   \qbezier(33,70)(43,70)(53,70)
   \qbezier(33,70)(43,76.5)(53,85)            
   \qbezier(56,40)(66,40)(76,40)
   \qbezier(56,40)(66,47)(76,55)
   \qbezier(56,40)(66,55)(76,70)
   \qbezier(56,40)(66,62)(76,85)   
   \qbezier(56,55)(66,47)(76,40)
   \qbezier(56,55)(66,55)(76,55)
   \qbezier(56,55)(66,62)(76,70)
   \qbezier(56,55)(66,70)(76,85)            
   \qbezier(56,70)(66,55)(76,40)
   \qbezier(56,70)(66,62)(76,55)
   \qbezier(56,70)(66,70)(76,70)
   \qbezier(56,70)(66,76.5)(76,85)    
   \qbezier(56,85)(66,62)(76,40)
   \qbezier(56,85)(66,70)(76,55)
   \qbezier(56,85)(66,77)(76,70)
   \qbezier(56,85)(66,85)(76,85)    
   \put(34,10){\vector(1,0){20}}
   \put(34,25){\vector(1,0){20}}
   \put(53,7){$\bullet$}
   \put(53,22){$\bullet$}
   \put(53,37){$\bullet$}
   \put(53,52){$\bullet$}
   \put(53,67){$\bullet$}
   \put(53,82){$\bullet$}
   \put(58,10){\vector(1,0){20}}
   \put(58,25){\vector(1,0){20}}
   \put(76,7){$\bullet$}
   \put(76,22){$\bullet$}
   \put(76,37){$\bullet$}
   \put(76,52){$\bullet$}
   \put(76,67){$\bullet$}
   \put(76,82){$\bullet$}
   \put(82,10){\vector(1,0){20}}
   \put(82,25){\vector(1,0){20}}
   \put(100,7){$\bullet$}
   \put(100,22){$\bullet$}   
  \end{picture} 
  \end{center}
 \caption{Slice representation associated with
 $\mathcal{O}^{(1,5)}$ in Example~\ref{exmp:FirstExampleIV}}
 \label{figure:SliceRepn2}  
 \end{figure}
 
 Finally, we consider the locally closed
 orbit $\mathcal{O}^{(1, 4)}$ in Example~\ref{exmp:SecondExampleIV}.
 Then, 
 we draw pictures like Figure~\ref{figure:SliceRepn3}
 and observe that
 \begin{align*}
  GL(\underline{n})_{A^{(1,4)}}&\cong GL(2)\times GL(3)\times
  GL(4)\times GL(2), \\
  W^{(1,4)}&\cong M(2,4)\oplus M(4, 2).
 \end{align*}
 The action of
 $GL(\underline{n})_{A^{(1,4)}}$ on $W^{(1,4)}$ is given by
 \[
  h\cdot w = \left(h_{1}w_{1}^{(1,4)}h_{3}^{-1},
 h_{3} w_{2}^{(1,4)}h_{4}^{-1}\right)
 \]
 for $h=(h_{1},h_{2},h_{3}, h_{4})\in GL(\underline{n})_{A^{(1,4)}}$
 and $w = (w_{1}^{(1,4)},w_{2}^{(1,4)})\in W^{(1,4)}$.   
 \begin{figure}[htbp]
  \begin{center}
  \begin{picture}(130,130)(0,0)
   \put(-50,100){$GL(\underline{n})_{A^{(1,4)}}$}
   \put(-30,45){{\footnotesize $GL(2)$}}
   \put(20,10){{\footnotesize $GL(4)$}}
   \put(30,120){{\footnotesize $GL(3)$}}
   \put(115,45){{\footnotesize $GL(2)$}}   
   \put(20,47){\oval(35,30)}
   \put(43,86){\oval(35,50)}
   \put(67.5,32){\oval(35,66)}
   \put(102.5,47){\oval(13,30)}
   \put(7,37){$\bullet$}
   \put(7,52){$\bullet$}
   \put(10,40){\vector(1,0){20}}
   \put(10,55){\vector(1,0){20}}   
   \put(30,37){$\bullet$}
   \put(30,52){$\bullet$}
   \put(30,67){$\bullet$}
   \put(30,82){$\bullet$}
   \put(30,97){$\bullet$}
   \put(53,70){\vector(-1,0){20}}      
   \put(53,85){\vector(-1,0){20}}   
   \put(53,100){\vector(-1,0){20}}
   \put(53,7){$\bullet$}
   \put(53,22){$\bullet$}
   \put(53,37){$\bullet$}
   \put(53,52){$\bullet$}
   \put(53,67){$\bullet$}
   \put(53,82){$\bullet$}
   \put(53,97){$\bullet$}
   \put(56,10){\vector(1,0){20}}
   \put(56,25){\vector(1,0){20}}
   \put(56,40){\vector(1,0){20}}
   \put(56,55){\vector(1,0){20}}            
   \put(76,7){$\bullet$}
   \put(76,22){$\bullet$}
   \put(76,37){$\bullet$}
   \put(76,52){$\bullet$}
  \put(100,37){$\bullet$}
  \put(100,52){$\bullet$}
  \end{picture}
  \qquad\qquad
     \begin{picture}(100,105)(0,0)
   \put(-30,100){$W^{(1,4)}$}
   \put(25,-7){{\footnotesize $M(2,4)$}}
   \put(80,-7){{\footnotesize $M(4,2)$}}            
   \put(7,37){$\bullet$}
   \put(7,52){$\bullet$}
   \put(10,40){\vector(1,0){20}}
   \put(10,55){\vector(1,0){20}}   
   \put(30,37){$\bullet$}
   \put(30,52){$\bullet$}
   \put(30,67){$\bullet$}
   \put(30,82){$\bullet$}
   \put(30,97){$\bullet$}
   \put(53,70){\vector(-1,0){20}}      
   \put(53,85){\vector(-1,0){20}}   
   \put(53,100){\vector(-1,0){20}}
   \qbezier(33,55)(43,55)(53,55)
   \qbezier(33,55)(43,47)(53,40)
   \qbezier(33,55)(43,40)(53,25)
   \qbezier(33,55)(43,33)(53,10)            
   \qbezier(33,40)(43,47)(53,55)      
   \qbezier(33,40)(43,40)(53,40)
   \qbezier(33,40)(43,33)(53,25)
   \qbezier(33,40)(43,26)(53,10)            
   \put(53,7){$\bullet$}
   \put(53,22){$\bullet$}
   \put(53,37){$\bullet$}
   \put(53,52){$\bullet$}
   \put(53,67){$\bullet$}
   \put(53,82){$\bullet$}
   \put(53,97){$\bullet$}
   \put(56,10){\vector(1,0){20}}
   \put(56,25){\vector(1,0){20}}
   \put(56,40){\vector(1,0){20}}
   \put(56,55){\vector(1,0){20}}            
   \put(76,7){$\bullet$}
   \put(76,22){$\bullet$}
   \put(76,37){$\bullet$}
   \put(76,52){$\bullet$}
   \qbezier(80,55)(90,55)(100,55)
   \qbezier(80,55)(90,47)(100,40)
   \qbezier(80,40)(90,47)(100,55)
   \qbezier(80,40)(90,40)(100,40)
   \qbezier(80,25)(90,40)(100,55)
   \qbezier(80,25)(90,33)(100,40)
   \qbezier(80,10)(90,33)(100,55)
   \qbezier(80,10)(90,26)(100,40)      
  \put(100,37){$\bullet$}
  \put(100,52){$\bullet$}
  \end{picture}   
  \end{center}
 \caption{Slice representation associated with
 $\mathcal{O}^{(1,4)}$ in Example~\ref{exmp:SecondExampleIV}}
 \label{figure:SliceRepn3}    
 \end{figure}
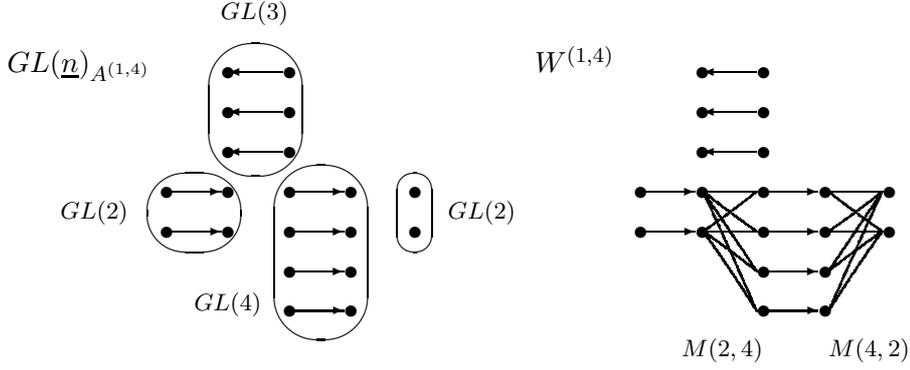
\end{example}

%
%

Now 
we complete the calculation of \eqref{form:BfunOfFirstExample}
and \eqref{form:BfunOfSecondExample}, which is a prototype
of the proof of Theorem~\ref{theorem:MainTheorem}.

\begin{example}[proof of  \eqref{form:BfunOfFirstExample}]
 \label{example:ProofOfFirstExample}
 We keep the notation of Example~\ref{example:SliceRepresentation}.
 First we consider the slice representation
 $(GL(\underline{n})_{A^{(3,4)}}, W^{(3,4)})$.
 By Lemma~\ref{lemma:Restriction}, the restriction of
 $f_{1} = f_{(3,4)}$ to $W^{(3,4)}$ is constant.
 On the other hand, by the construction of $f_{2} = f_{(1,5)}$, 
 the restriction $f_{2}'$ of $f_{2}$ to $W^{(3,4)}$ is given by
 \[
  f_{2}'(w) = \det \left( w_{3}^{(3,4)} w_{2}^{(3,4)} w_{1}^{(3,4)}
 \right)
 \]
 for $w = (w_{1}^{(3,4)}, w_{2}^{(3,4)}, w_{3}^{(3,4)}) \in W^{(3,4)}$. 
 If we employ Notation~\ref{notation:RIasDetSymbol}, it follows that 
 \[
  f_{2}'(w)= \det\left(\ovt{2}\longrightarrow \ovt{5}\longrightarrow
 \ovt{6}\longrightarrow \ovt{2}
 \right).
 \]
 By Theorem~\ref{thm:Bfun}, we have
 \[
  b\left(\ovt{2}\longrightarrow \ovt{5}\longrightarrow
 \ovt{6}\longrightarrow \ovt{2}
 \right) = (s+1)(s+2)(s+4)(s+5)^2 (s+6).
 \]
 Then we apply Lemma~\ref{lemma:Localization} to
 $f^{\underline{m}} = f_{1}^{m_{1}} f_{2}^{m_{2}}$ and
 $\mathcal{O}^{(3,4)}$. Since $b_{f^{\underline{m}}}(s) =
 b_{\underline{m}}(\underline{m}s)$, 
 we see that 
 $b_{\underline{m}}(\underline{s})$ is divisible by
 \begin{equation}
  \label{form:FirstLocalBfun}
  [s_{2}+1]_{m_{2}} [s_{2}+2]_{m_{2}}[s_{2}+4]_{m_{2}}
 [s_{2}+5]_{m_{2}}^2 [s_{2}+6]_{m_{2}}.
 \end{equation}  
 This calculation can be interpreted graphically in the following way.
 Let us compare two exact lace diagrams in
 Figure~\ref{fig:LDinFirstExample}, and pick up the arrows of the
 diagram of $\mathcal{O}^{(1,5)}$ (right) which do not overlap with the
 arrows of the diagram of $\mathcal{O}^{(3,4)}$ (left). Transfer
 those arrows to the diagram representing the slice representation
 associated with $\mathcal{O}^{(3,4)}$ (see
 Figure~\ref{figure:SliceRepn1}). 
 Then we observe that these arrows form a smaller exact lace diagram as
 shown in Figure~\ref{figure:FactorsInSliceRepn}.
 The local $b$-function~\eqref{form:FirstLocalBfun} corresponds to
 this smaller exact lace diagram. 
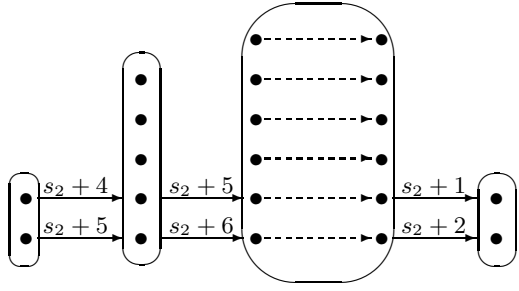
\begin{figure}[htbp]
  \begin{center}
  \begin{picture}(200,95)(0,0)
   \put(7,7){$\bullet$}
   \put(7,22){$\bullet$}
   \put(14,10){\vector(1,0){32}}
   \put(14,25){\vector(1,0){32}}
   \put(16,12){{\footnotesize $s_{2}+5$}}
   \put(16,27){{\footnotesize $s_{2}+4$}}   
   \put(9,17){\oval(11,35)}   
   \put(50,7){$\bullet$}
   \put(50,22){$\bullet$}
   \put(50,37){$\bullet$}
   \put(50,52){$\bullet$}
   \put(50,67){$\bullet$}
   \put(60,10){\vector(1,0){31}}
   \put(60,25){\vector(1,0){31}}
   \put(63,12){{\footnotesize $s_{2}+6$}}
   \put(63,27){{\footnotesize $s_{2}+5$}}      
   \put(53,40){\oval(14,80)}   
   \put(93,7){$\bullet$}
   \put(93,22){$\bullet$}
   \put(93,37){$\bullet$}
   \put(93,52){$\bullet$}
   \put(93,67){$\bullet$}
   \put(93,82){$\bullet$}
   \put(119,46){\oval(57,105)}   
   \multiput(99,10)(4,0){9}{\line(1,0){2}}
   \multiput(99,25)(4,0){9}{\line(1,0){2}}
   \multiput(99,40)(4,0){9}{\line(1,0){2}}
   \multiput(99,55)(4,0){9}{\line(1,0){2}}
   \multiput(99,70)(4,0){9}{\line(1,0){2}}
   \multiput(99,85)(4,0){9}{\line(1,0){2}}
   \put(137,10){\vector(1,0){2}}
   \put(137,25){\vector(1,0){2}}
   \put(137,40){\vector(1,0){2}}
   \put(137,55){\vector(1,0){2}}
   \put(137,70){\vector(1,0){2}}
   \put(137,85){\vector(1,0){2}}
   \put(140,7){$\bullet$}
   \put(140,22){$\bullet$}
   \put(140,37){$\bullet$}
   \put(140,52){$\bullet$}
   \put(140,67){$\bullet$}
   \put(140,82){$\bullet$}
   \put(147,10){\vector(1,0){31}}
   \put(147,25){\vector(1,0){31}}
   \put(150,12){{\footnotesize $s_{2}+2$}}
   \put(150,27){{\footnotesize $s_{2}+1$}}      
   \put(183,7){$\bullet$}
   \put(183,22){$\bullet$}
   \put(186,17){\oval(14,35)}   
  \end{picture}
 \end{center} 
 \caption{Smaller exact lace diagram}
 \label{figure:FactorsInSliceRepn}
\end{figure}
 
 Second we consider the slice representation
 $(GL(\underline{n})_{A^{(1,5)}}, W^{(1,5)})$. The restriction
 of $f_{2} = f_{(1,5)}$ to $W^{(1,5)}$
 is constant, and the restriction $f_{1}'$ of
 $f_{1}= f_{(3,4)}$ to $W^{(1,5)}$ is given by
 \[
  f_{1}'(w) = \det \left( w_{2}^{(1,5)}
 \right) 
 \]
 for $w= (w_{1}^{(1,5)}, w_{2}^{(1,5)}) \in W^{(1,5)}$.
 It follows that 
 $f_{1}'(w)= \det \left(
 \ovt{4}\longrightarrow \ovt{4}
 \right)$.
 Since $b\left(\ovt{4}\longrightarrow \ovt{4}\right)
 = (s+1)(s+2)(s+3)(s+4)$, we observe that
 $b_{\underline{m}}(\underline{s})$ is divisible by
 \begin{equation}
  \label{form:SecondLocalBfun}
  [s_{1}+1]_{m_{1}}[s_{1}+2]_{m_{1}}
   [s_{1}+3]_{m_{1}}[s_{1}+4]_{m_{1}}.
 \end{equation}
 Finally, let $\underline{m}= (1, 0)$ and $\underline{s} = (s, 0)$
 in \eqref{form:ByAfunInFirstExample}. Then it follows from 
 $b_{(1,0)}(s,0) = b_{f_{1}}(s) = (s+1)\cdots (s+6)$ that 
 $\{\alpha_{3,1}, \alpha_{3,2}\} = \{5, 6\}$. 
 Now all of 
 $\alpha_{j, r}$ in \eqref{form:ByAfunInFirstExample} have
 been determined and this completes the proof of
 \eqref{form:BfunOfFirstExample}, i.e.,
 \begin{align*}
    b_{\underline{m}}(\underline{s})&=
  [s_{1}+1]_{m_{1}} [s_{1}+2]_{m_{1}} [s_{1}+3]_{m_{1}}
  [s_{1}+4]_{m_{1}} \\
  &\quad \times
    [s_{2}+1]_{m_{2}}[s_{2}+2]_{m_{2}} [s_{2}+4]_{m_{2}}
  [s_{2}+5]_{m_{2}}^{2} [s_{2}+ 6]_{m_{2}}\\
  \nonumber
  &\quad \times
  [s_{1}+s_{2}+ 5]_{m_{1}+m_{2}} [s_{1}+s_{2}+ 6]_{m_{1}+m_{2}}.
 \end{align*}
\end{example}

\begin{remark}
 In fact, the argument in
 Example~\ref{example:ProofOfFirstExample} is redundant;
 it is enough to determine one of the two local
 $b$-functions \eqref{form:FirstLocalBfun} and
 \eqref{form:SecondLocalBfun}.
\end{remark}

\begin{example}[proof of  \eqref{form:BfunOfSecondExample}]
 We consider the slice representation
 $(GL(\underline{n})_{A^{(1,4)}}, W^{(1,4)})$.
 By Lemma~\ref{lemma:Restriction}, the restriction $f_{1}'$ of
 $f_{1} = f_{(1,4)}$ to $W^{(1,4)}$ is constant, 
 and by the construction of $f_{2} = f_{(2,5)}$,
 the restriction $f_{2}'$ of $f_{2}$ to $W^{(1,4)}$ is given by
 \[
 f_{2}'(w) = \det
 \left(w_{1}^{(1,4)} w_{2}^{(1,4)}
 \right) 
 \]
 for $w = (w_{1}^{(1,4)}, w_{2}^{(1,4)})\in W^{(1,4)}$.
 We have 
 $f_{2}'(w) = \det\left(\ovt{2}\longleftarrow \ovt{4}\longleftarrow
 \ovt{2} \right)$, and since
 $b\left(\ovt{2}\longleftarrow \ovt{4}\longleftarrow
 \ovt{2} \right) = (s+1)(s+2)(s+3)(s+4)$, we see that
 $b_{\underline{m}}(\underline{s})$ is divisible by
 \[
  [s_{2}+1]_{m_{2}}[s_{2}+2]_{m_{2}}[s_{2}+3]_{m_{2}}[s_{2}+4]_{m_{2}}.
 \]
 Then it follows from
 \[
  b_{(0, 1)}(0, s) = b_{f_{2}}(s) =
 (s+1)(s+2)(s+3)^2 (s+4)^2 (s+5)(s+6)(s+7)
 \]
 that $\{\alpha_{3,1},\dots, \alpha_{3,5}\}= \{3,4,5,6,7\}$ in
 \eqref{form:ByAfunInSecondExample}. We thus obtain
 \eqref{form:BfunOfFirstExample}, i.e.,
 \begin{align*}
   b_{\underline{m}}(\underline{s})&=
  [s_{1}+1]_{m_{1}}[s_{1}+2]_{m_{1}}[s_{1}+4]_{m_{1}}
  [s_{1}+5]_{m_{1}} \\
  &\quad\times
  [s_{2}+1]_{m_{2}}[s_{2}+2]_{m_{2}}[s_{2}+3]_{m_{2}}
  [s_{2}+4]_{m_{2}} \\
  \nonumber
  &\quad \times
  [s_{1}+s_{2}+ 3]_{m_{1}+m_{2}}
  [s_{1}+s_{2}+ 4]_{m_{1}+m_{2}}
  [s_{1}+s_{2}+ 5]_{m_{1}+m_{2}}\\
  \nonumber
  &\quad\times
  [s_{1}+s_{2}+ 6]_{m_{1}+m_{2}}
  [s_{1}+s_{2}+ 7]_{m_{1}+m_{2}}.  
 \end{align*}
\end{example} 


Now we are in a position to finish the proof of
our main theorem. 

\begin{proof}[Proof of Theorem~\ref{theorem:MainTheorem}] 
 {\bf Step (I)}. 
 First we consider the case of $l=2$, i.e.,
 we assume that there exist two fundamental relative invariants
 $f_{1}$ and $f_{2}$.
 As shown in Section~\ref{section:Afunctions}, the $b$-function
 $b_{\underline{m}}(\underline{s})$ of $\underline{f} =
 (f_{1}, f_{2})$ is, in general, of the form
 \[
  b_{\underline{m}}(\underline{s}) =
 \prod_{r=1}^{\mu_{1}} [s_{1}+\alpha_{1, r}]_{m_{1}}
 \times
  \prod_{r=1}^{\mu_{2}} [s_{2}+\alpha_{2, r}]_{m_{2}}
 \times
 \prod_{r=1}^{\mu_{3}} [s_{1}+s_{2}+\alpha_{3, r}]_{m_{1}+m_{2}}
 \]
 with some $\alpha_{j, r}\in \Q_{>0}$.
 Thus it remains to determine $\alpha_{j,r}$. 
 By using $b_{(1,0)}(s, 0) = b_{f_{1}}(s)$,
 $b_{(0,1)}(0,s) = b_{f_{2}}(s)$ and 
 Theorem~\ref{thm:Bfun}, we obtain 
 $\{\alpha_{1,1},\dots, \alpha_{1,\mu_{1}}, \,
 \alpha_{3, 1},\dots, \alpha_{3,\mu_{3}}\}$ and
 $\{\alpha_{2,1},\dots, \alpha_{2,\mu_{2}}, \,
 \alpha_{3, 1},\dots, \alpha_{3,\mu_{3}}\}$
 {\it as  sets}. We will show that the arrangement
 such as
 Figures~\ref{fig:SuperpositionInFirstExample}
 and~\ref{fig:SuperpositionInSecondExample} gives the
 correct answer to our problem.
 For $i=1, 2$, let $\mathcal{O}^{(i)}$ be the  closed orbit
 in $\{f_{i}\neq 0\}$ and
 $(GL(\underline{n})_{A^{(i)}}, W^{(i)})$ the slice representation
 associated with $\mathcal{O}^{(i)}$. Let $f_{2}'$ be the restriction of
 $f_{2}$ to $W^{(1)}$ and
 \[
  b_{f_{2}'}(s) = \prod_{r=1}^{\mu_{2}'} (s+\beta_{r})
 \]
 be the $b$-function of $f_{2}'$. Then, by applying 
 Lemma~\ref{lemma:Localization} to $f^{\underline{m}} =
 f_{1}^{m_{1}}f_{2}^{m_{2}}$ and $\mathcal{O}^{(1)}$, we see that 
 $b_{\underline{m}}(\underline{s})$ is divisible by
 \[
  \prod_{r=1}^{\mu_{2}'} [s_{2}+\beta_{r}]_{m_{2}},
 \]
 and hence $\{\beta_{1},\dots, \beta_{\mu_{2}'}\}\subset
 \{\alpha_{2,1},\dots, \alpha_{2,\mu_{2}}\}$. 
 On the other hand,
 let $D^{(i)}\; (i=1,2)$ be the exact lace diagram representing
 $\mathcal{O}^{(i)}$. 
 Then, by the construction of the exact lace diagrams, 
 the arrows of $D^{(2)}$ which
 do not overlap with the arrows of $D^{(1)}$ form a smaller exact lace
 diagram, which is contained in the diagram representing
 the slice representation $(GL(\underline{n})_{A^{(1)}}, W^{(1)})$.
 The arrows which corresponds to $s_{2}+\beta_{1},\dots,
 s_{2}+\beta_{\mu_{2}'}$ also form an exact lace diagram
 in the diagram of $(GL(\underline{n})_{A^{(1)}}, W^{(1)})$. 
 This observation proves that  
 $\mu_{2} = \mu_{2}'$ and
 $\{\alpha_{2, 1},\dots, \alpha_{2, \mu_{2}}\} =
 \{\beta_{1},\dots, \beta_{\mu_{2}}\}$. Since 
 $\{\alpha_{2,1},\dots, \alpha_{2,\mu_{2}}, \,
 \alpha_{3, 1},\dots, \alpha_{3,\mu_{3}}\}$ is calculated as a set,
 we obtain $\{\alpha_{3, 1},\dots, \alpha_{3,\mu_{3}}\}$, and
 also  $\{\alpha_{1, 1},\dots, \alpha_{1,\mu_{1}}\}$.
 This means that our arrangement is correct for the case of 
 $l=2$. 

 \bigskip
 
 \noindent
 {\bf Step (II)}. For general $l$, we use an induction on $l$.
 Let $b_{\underline{m}}(\underline{s})$ be the $b$-function of
 $\underline{f}=(f_{1},\dots, f_{l})$. We decompose
 $b_{\underline{m}}(\underline{s})$ as
 \begin{equation}
  \label{form:WhenL>2}
  b_{\underline{m}}(\underline{s}) = c_{\underline{m}}(\underline{s})
 \cdot \prod_{r=1}^{\mu}
 [s_{1}+\cdots + s_{l}+\alpha_{r}]_{m_{1}+\cdots + m_{l}},
 \end{equation}
 where $c_{\underline{m}}(\underline{s})$ does not contain the
 factor of the form $s_{1}+\cdots + s_{l}+\alpha$.
 Let $\widetilde{D}$ be the diagram obtained by superposing all the
 exact lace diagrams $D^{(i)}$ representing the locally closed orbit
 $\mathcal{O}^{(i)} \; (i=1,\dots, l)$. Fix an index $k$
 ($k=1,\dots, l$). Then the arrows of $\widetilde{D}$ which do not
 overlap with the arrows of $D^{(k)}$ corresponds to the $b$-function
 $b_{\underline{m}}^{(k)}(\underline{s})$ of the slice representation
 $(GL(\underline{n})_{A^{(k)}}, W^{(k)})$. By induction hypothesis,
 $b_{\underline{m}}^{(k)}(\underline{s})$ can be calculated by the
 superposition method. By repeating this for $k=1,\dots, l$,
 we obtain $c_{\underline{m}}(\underline{s})$.
 For $\varepsilon_{i} = (0,\dots, \overset{i}{1},\dots, 0)$, we have
 $b_{\varepsilon_{i}}(\varepsilon_{i}s) = b_{f_{i}}(s)$, and
 by using this with Theorem~\ref{thm:Bfun}, we obtain
 $\{\alpha_{1},\dots, \alpha_{\mu}\}$ in \eqref{form:WhenL>2}.
 This completes the proof of
 Theorem~\ref{theorem:MainTheorem}.
\end{proof}

We conclude the present paper by giving an example with $l=3$. 

\begin{example}
 Let us consider the following quiver of type $A_{7}$:
 \[
  Q:\quad \vt{1}\lra \vt{2}\lla \vt{3}\lra \vt{4}\lra
 \vt{5}\lra \vt{6}\lla \vt{7}
 \]
 We put $\underline{n} = (1,3,5,4,4,3,1)$. Then
 $(GL(\underline{n}), \Rep(Q,\underline{n}))$ has 3 fundamental
 relative invariants $f_{1}:= f_{(1, 5)}, f_{2} := f_{(3,4)},
 f_{3}:= f_{(2,7)}$, which are given
 explicitly as follows:
 \begin{align*}
  f_{1}(v) &= \det
 \begin{pmatrix}
  X_{2,1} & X_{2, 3} \\
  O & X_{6, 5}X_{5, 4}X_{4,3}
 \end{pmatrix}, \\
 f_{2}(v) &= \det X_{5, 4},  \\
 f_{3}(v) &=
 \det
 \begin{pmatrix}
  X_{2, 3}& O \\
  X_{6, 5}X_{5, 4}X_{4, 3} & X_{6, 7}
 \end{pmatrix}  
 \end{align*}
 for $v = \left(
 X_{2, 1}, \, X_{2, 3}, \, X_{4, 3}, \,
 X_{5, 4}, \, X_{6, 5}, \, X_{6, 7}
 \right)\in \Rep(Q, \underline{n})$.
 We superpose the exact lace diagrams corresponding to the
 locally closed orbits,
 and the resultant diagram is given as 
 Figure~\ref{figure:ExampleL=3}.
 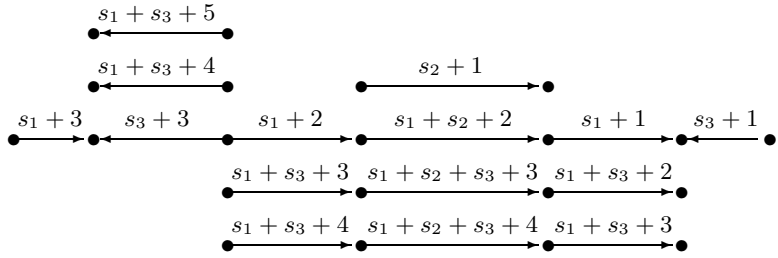
\begin{figure}[htbp]
  \begin{center}
  \begin{picture}(300,100)(0,0)
  \put(0,40){$\bullet$}
  \put(3,43){\vector(1,0){26}}
  \put(4,48){{\footnotesize$s_{1}+3$}}   
  \put(30,40){$\bullet$}
  \put(30,60){$\bullet$}
  \put(30,80){$\bullet$}
  \put(81,43){\vector(-1,0){46}}      
  \put(81,63){\vector(-1,0){46}}      
  \put(81,83){\vector(-1,0){46}}
  \put(44,48){{\footnotesize$s_{3}+3$}}
  \put(34,68){{\footnotesize$s_{1}+s_{3}+4$}}
  \put(34,88){{\footnotesize$s_{1}+s_{3}+5$}}  
  \put(80,0){$\bullet$}           
  \put(80,20){$\bullet$}        
  \put(80,40){$\bullet$}     
  \put(80,60){$\bullet$}  
  \put(80,80){$\bullet$}
  \put(83,3){\vector(1,0){46}}         
  \put(83,23){\vector(1,0){46}}      
  \put(83,43){\vector(1,0){46}}
  \put(94,48){{\footnotesize$s_{1}+2$}}
  \put(84,28){{\footnotesize$s_{1}+s_{3}+3$}}
  \put(84,8){{\footnotesize$s_{1}+s_{3}+4$}}         
  \put(130,0){$\bullet$}           
  \put(130,20){$\bullet$}        
  \put(130,40){$\bullet$}     
  \put(130,60){$\bullet$}
  \put(133,3){\vector(1,0){66}}
  \put(133,23){\vector(1,0){66}}
  \put(133,43){\vector(1,0){66}}
  \put(133,63){\vector(1,0){66}}
  \put(154,68){{\footnotesize$s_{2}+1$}}
  \put(145,48){{\footnotesize$s_{1}+s_{2}+2$}}
  \put(135,28){{\footnotesize$s_{1}+s_{2}+s_{3}+3$}}
  \put(135,8){{\footnotesize$s_{1}+s_{2}+s_{3}+4$}} 
  \put(200,0){$\bullet$}           
  \put(200,20){$\bullet$}        
  \put(200,40){$\bullet$}     
  \put(200,60){$\bullet$}
  \put(203,3){\vector(1,0){46}}
  \put(203,23){\vector(1,0){46}}
  \put(203,43){\vector(1,0){46}}
  \put(215,48){{\footnotesize$s_{1}+1$}}
  \put(205,28){{\footnotesize$s_{1}+s_{3}+2$}}
  \put(205,8){{\footnotesize$s_{1}+s_{3}+3$}} 
  \put(250,0){$\bullet$}           
  \put(250,20){$\bullet$}        
  \put(250,40){$\bullet$}
  \put(283,40){$\bullet$}
  \put(281,43){\vector(-1,0){26}}
  \put(257,48){{\footnotesize$s_{3}+1$}}   
  \end{picture}
  \end{center}
  \caption{Example with $l=3$}
  \label{figure:ExampleL=3}
 \end{figure}   
 Hence we see that the $b$-function $b_{\underline{m}}
 (\underline{s})$ of $\underline{f}= (f_{1},f_{2},f_{3})$ is
 given by
 \begin{align*}
  b_{\underline{m}}(\underline{s})&=
  [s_{1}+1]_{m_{1}}[s_{1}+2]_{m_{1}}[s_{1}+3]_{m_{1}}
  [s_{2}+1]_{m_{2}}[s_{3}+1]_{m_{3}}[s_{3}+3]_{m_{3}}  \\
  & \quad \times
  [s_{1}+s_{2}+2]_{m_{1}+m_{2}} [s_{1}+s_{3}+ 2]_{m_{1}+m_{3}}
  [s_{1}+s_{3}+3]_{m_{1}+m_{3}}^2 \\
  &\quad \times
  [s_{1}+s_{3}+ 4]_{m_{1}+m_{3}}^2
  [s_{1}+s_{3}+ 5]_{m_{1}+m_{3}} \\
  &\quad \times
  [s_{1}+s_{2}+s_{3}+3]_{m_{1}+m_{2}+m_{3}}
  [s_{1}+s_{2}+s_{3}+4]_{m_{1}+m_{2}+m_{3}}.
 \end{align*}
 
\end{example}

\address{
Kazunari Sugiyama \\
Department of Mathematics \\
Chiba Institute of Technology\\
2-1-1, Shibazono, Narashino, Chiba, 275-0023 
Japan 
}
{skazu@sky.it-chiba.ac.jp}

\end{document}